
\documentclass{amsart}
\usepackage{amssymb, graphics, color, enumitem, mathrsfs}
\usepackage[all]{xy}
\usepackage{appendix}

 \usepackage[applemac]{inputenc}
\usepackage[cyr]{aeguill}

\usepackage[margin = 1in]{geometry}

\newtheorem{lemma}{Lemma}[section]
\newtheorem{proposition}[lemma]{Proposition}
\newtheorem{corollary}[lemma]{Corollary}
\newtheorem{theorem}[lemma]{Theorem}

\newtheorem{example}[lemma]{Example}
\newtheorem{definition}[lemma]{Definition}
\newtheorem{remark}[lemma]{Remark}

\newtheorem*{Acknowledgement}{Acknowledgements}

\newcommand\ie{i\@.e\@. }

\newcommand\pa{ \partial}

\newcommand\bbC{\mathbb C}

\newcommand\bbH{\mathbb H}

\newcommand\bbR{\mathbb R}
\newcommand\bbS{\mathbb S}

\newcommand\bbZ{\mathbb Z}

\renewcommand\Im{\operatorname{Im}}

\usepackage{color}

\newcommand{\lrp}[1]{\left( {#1} \right)}

\newcommand\CI{\mathcal{C}^{\infty}}

\newcommand\Fr{\operatorname{Fr}}

\newcommand\cC{\mathcal{C}}

\newcommand\cD{\mathcal{D}}
\newcommand\cF{\mathcal{F}}
\newcommand\cL{\mathcal{L}}
\newcommand\db{\overline{\pa}}

\newcommand\cV{\mathcal{V}}
\newcommand\cU{\mathcal{U}}
\newcommand\cI{\mathcal{I}}

\newcommand\pr{\operatorname{pr}}

\newcommand\Id{\operatorname{Id}}

\newcommand\cH{\mathcal{H}}

\newcommand\hM{\widehat{M}}

\newcommand\cM{\mathcal{M}}

\newcommand\Tr{\operatorname{Tr}}

\newcommand\rel{\operatorname{rel}}
\newcommand\abs{\operatorname{abs}}

\newcommand\AC{\operatorname{AC}}

\newcommand\SU{\operatorname{SU}}

\newcommand\Sp{\operatorname{Sp}}
\newcommand\tr{\operatorname{tr}}

\newcommand{\Hilb}{\operatorname{Hilb}}

\newcommand{\inc}{\operatorname{in}}
\newcommand{\out}{\operatorname{out}}

\newcommand\QAC{\operatorname{QAC}}

\newcommand\QFB{\operatorname{QFB}}

\newcommand\ALE{\operatorname{ALE}}

\newcommand\QALE{\operatorname{QALE}}
\newcommand\SO{\operatorname{SO}}
\newcommand\ADHM{\operatorname{ADHM}}

\newcommand\Hom{\operatorname{Hom}}

\newcommand\cS{\mathcal{S}}

\newcommand\mf{\mathfrak}

\newcommand\cW{\mathcal{W}}

\newcommand\gU{\operatorname{U}}
\newcommand\reduced{\operatorname{red}}
\newcommand\cO{\mathcal{O}}

\begin{document}
\title[Asymptotic geometry at infinity of quiver varieties]
{Asymptotic geometry at infinity of quiver varieties}

\author{Panagiotis Dimakis}
\address{Department of Mathematics, University of Maryland}
\email{pdimakis12345@gmail.com}

\author{Fr\'ed\'eric Rochon}
\address{D\'epartement de Math\'ematiques, Universit\'e du Qu\'ebec \`a Montr\'eal}
\email{rochon.frederic@uqam.ca}

\maketitle

\begin{abstract}
Using an approach developed by Melrose to study the geometry at infinity of the Nakajima metric on the reduced Hilbert scheme of points on $\bbC^2$, we show that the Nakajima metric on a quiver variety is quasi-asymptotically conical ($\QAC$) whenever its defining parameters satisfy an appropriate genericity assumption.  As such, it is of bounded geometry and of maximal volume growth.  Being $\QAC$ is one of two main ingredients allowing us to use the work of Kottke and the second author to compute its reduced $L^2$-cohomology and prove the Vafa-Witten conjecture.  The other is a vanishing theorem in $L^2$-cohomology for exact wedge $3$-Sasakian metrics generalizing a result of Galicki and Salamon for closed $3$-Sasakian manifolds.    
\end{abstract}

\tableofcontents

\numberwithin{equation}{section}

\section{Introduction}

In \cite{Sen} Sen, based on predictions coming from a particular type of duality in string theory called $S$-duality, conjectured that the Hodge cohomology $\mathcal H^q(\widetilde{\mathcal M}_k^0)$ of the $L^2$-metric of the universal cover $\widetilde{\mathcal M}_k^0$ of the reduced moduli space $\mathcal M_k^0$ of $\SU(2)$ monopoles of magnetic charge $k$ on $\bbR^3$ is only non-trivial in middle degree and admits a complete description in terms of a natural $\bbZ_k$-action. Soon after Sen formulated his conjecture, Segal and Selby \cite{Segal-Selby} computed the relative and absolute cohomologies $H_c^q(\widetilde{\mathcal M}_k^0)$ and $H^q(\widetilde{\mathcal M}_k^0)$ and showed that the images $\Im[H_c^q(\widetilde{\mathcal M}_k^0)\to H^q(\widetilde{\mathcal M}_k^0)]$ satisfy the predictions of Sen's conjecture. They used this observation to reformulate Sen's conjecture into the statement that the natural inclusion 
\begin{equation}\label{Sen conjecture}
\Im[H_c^q(\widetilde{\mathcal M}_k^0)\to H^q(\widetilde{\mathcal M}_k^0)]\hookrightarrow \mathcal H^q(\widetilde{\mathcal M}_k^0)
\end{equation}
is in fact an isomorphism. 

Around the same time that Sen formulated his conjecture, Nakajima in \cite{Nakajima1994} generalized the $\ADHM$ construction of instantons on asymptotically locally Euclidean $(\ALE)$ spaces of \cite{Kronheimer-Nakajima} by allowing the underlying quiver and the dimensions of the vector spaces associated to the vertices of the quiver to be arbitrary. The new family of varieties $\mathfrak M _{\zeta}$ thus defined, called Nakajima quiver varieties, carry a natural metric and under the assumption that the $\zeta$ parameter is generic, are complete hyperK\"ahler manifolds. 

Shortly after \cite{Sen, Nakajima1994} appeared, Vafa and Witten in \cite{Vafa-Witten},  again based on predictions of $S$-duality, made a similar conjecture to the one of Sen about the Hodge cohomology  $\mathcal H^{q}(\mathfrak M_{\zeta})$ of a quiver variety $\mathfrak{M}_{\zeta}$. Specifically, assuming that the $\zeta$ parameter is generic, they conjectured that the middle dimensional absolute cohomology of $\mathfrak{M}_{\zeta}$ should coincide with all of the Hodge cohomology $\mathcal H^{*}(\mathfrak M_{\zeta})$ of the Nakajima metric on $\mathfrak M_{\zeta}$. It follows from \cite[Corollary 11.2]{Nakajima_1998} that the natural map $H_c^q(\mathfrak M_{\zeta})\to H^q(\mathfrak M_{\zeta})$ from compactly supported into absolute cohomology is an isomorphism in middle degree. Therefore, we can restate the conjecture in the form \eqref{Sen conjecture} by asking that the natural inclusion 
\begin{equation}
\Im[H_c^q(\mathfrak M_{\zeta})\to H^q(\mathfrak M_{\zeta})]\hookrightarrow \mathcal H^q(\mathfrak M_{\zeta})
\end{equation}
be in fact an isomorphism. 

A major step towards a proof of both conjectures was made by Hitchin \cite{Hitchin}, who showed that for many hyperK\"ahler metrics coming from hyperK\"ahler quotient constructions, all $L^2$-harmonic forms lie in middle dimension, thus immediately proving both conjectures outside of middle degree. The Vafa-Witten conjecture for the simplest type of Nakajima quiver variety, namely the $\ALE$ gravitational instantons,  follows from standard results on the $L^2$-cohomology of asymptotically conical $(\AC)$ metrics, see for example \cite{MelroseGST,HHM2004}. More recently, attention was focused on the particular case of the Vafa-Witten conjecture when the Nakajima quiver variety is the reduced Hilbert scheme of $n$ points on $\bbC^2$, $\Hilb_0^n(\bbC^2)$. The $n=3$ case of the conjecture was proven by Carron in \cite{Carron2011b} and in \cite{Carron2011} it was proven that the Nakajima metric on $\Hilb_0^n(\bbC^2)$ is quasi-asymptotically locally Euclidean $(\QALE)$ in the sense of Joyce \cite{Joyce}. The case for arbitrary $n$ was settled  by Kottke and the second author in \cite{KR2} using the analytical results of \cite{KR1}.

In this paper we study the asymptotic geometry of Nakajima quiver varieties and using the strategy of \cite{KR2} and the results of \cite{KR1}, we prove the Vafa-Witten conjecture for Nakajima quiver varieties $\mathfrak M_{\zeta}$ with $\zeta$ satisfying a slightly stronger genericity assumption than the one required to guarantee smoothness of $\mathfrak M_{\zeta}$.
\begin{theorem}
The Vafa-Witten conjecture holds for all Nakajima quiver varieties $\mathfrak M_{\zeta}$ under the assumption that $\zeta$ is properly generic in the sense of Definition~\ref{pg.2} below.\end{theorem}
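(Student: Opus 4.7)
The plan is to follow the overall strategy of Kottke and the second author in \cite{KR2}, which reduces the Vafa-Witten conjecture to two essentially independent analytic/geometric inputs that can then be fed into the general reduced $L^2$-cohomology machinery of \cite{KR1}. The first input is a precise description of the asymptotic geometry: one must show that $\mathfrak M_\zeta$ admits a manifold-with-corners compactification on which the Nakajima metric is $\QAC$ with cross-sections at infinity modeled by (products of) $3$-Sasakian links of lower-dimensional quiver varieties. The second input is a spectral gap on each such link, strong enough to rule out small eigenvalues of the Hodge--deRham operator that would contribute spurious $L^2$-harmonic forms away from middle degree.

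First, I would construct the compactification. Proper genericity of $\zeta$ ensures that the hyperK\"ahler moment map has a good stratification, and one can iteratively resolve the singular strata at infinity in the spirit of Melrose's treatment of $\Hilb_0^n(\bbC^2)$, producing a manifold with corners $\overline{\mathfrak M_\zeta}$ whose boundary hypersurfaces are in bijection with the deeper strata of the closure of the hyperK\"ahler quotient. Each boundary hypersurface should fiber over a base parametrizing configurations of the stratum, with fiber itself an iterated $\QAC$-type quiver variety of lower complexity. I would then check, using the explicit form of the Nakajima metric and the asymptotics inherited from the linear ADHM-type equations, that on this compactification the metric is quasi-asymptotically conical in the sense of \cite{KR1,KR2}. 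This step is where proper genericity is crucial: without it, the link of a stratum could itself degenerate, destroying the required iterated fibered-cone model.

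Next, to obtain the spectral gap, I would realize the link at infinity of each stratum as an exact $3$-Sasakian wedge metric over a quaternionic-K\"ahler base. Applying the Semmelmann--Weingart Weitzenb\"ock formula for twistor-type operators on quaternionic-K\"ahler manifolds, one obtains a positive lower bound on the relevant Laplacian on coclosed forms, and hence a uniform spectral gap for the Hodge--deRham operator acting on sections of the appropriate bundle over the link. This is where one recovers (and strengthens) the Galicki--Salamon vanishing in cohomology, and it is the hypothesis that powers the Fredholm theory of \cite{KR1} in degrees away from the middle.

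Finally, with these two ingredients in place, I would invoke the Kottke--Rochon computation of reduced $L^2$-cohomology in the $\QAC$ setting to identify $\mathcal H^q(\mathfrak M_\zeta)$ with the image $\Im[H^q_c(\mathfrak M_\zeta)\to H^q(\mathfrak M_\zeta)]$. Combined with Hitchin's result that all $L^2$-harmonic forms on such hyperK\"ahler quotients concentrate in middle degree and with Nakajima's isomorphism $H^q_c\cong H^q$ in middle degree from \cite[Corollary~11.2]{Nakajima_1998}, this yields the Vafa--Witten conjecture. I expect the main obstacle to be the first step: the inductive construction of the compactification and the verification of the $\QAC$ structure for arbitrary (not merely symmetric-product-type) quiver data, where one must control all the ways in which the moment-map equations can degenerate as one escapes to infinity.
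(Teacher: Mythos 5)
Your proposal follows essentially the same route as the paper: construct the $\QAC$ compactification by Melrose-style resolution of the strata at infinity (Section~\ref{cqac.0}, culminating in Corollary~\ref{cqac.21}), establish the spectral gap for exact $3$-Sasakian wedge links via the Semmelmann--Weingart Weitzenb\"ock formula on the quaternionic-K\"ahler quotient (Theorems~\ref{bs.12} and \ref{bs.20}), and then feed both into the machinery of \cite{KR1,KR2} together with Hitchin's middle-degree result to conclude (Theorems~\ref{l2c.1} and \ref{l2c.2}). This matches the paper's proof in structure and in all key ingredients, including the role of proper genericity in keeping the lower-depth fiber quiver varieties smooth.
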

In order to apply the results of \cite{KR1,KR2}, we need to show that the Nakajima metric on $\mathfrak M_{\zeta}$ is a quasi-fibered boundary $(\QFB)$ metric in the sense of \cite{CDR}. In fact, we show that they are quasi-asymptotically conical ($\QAC$), a particular type of $\QFB$ metrics of maximal volume growth originally introduced by Degeratu and Mazzeo \cite{DM2018} and generalizing the notion of $\QALE$ metrics.

\begin{theorem}
Given that $\zeta$ is properly generic, the Nakajima metric of any Nakajima quiver variety $\mathfrak M_{\zeta}$ is $\QAC$ and admits a smooth expansion at infinity in the sense of Definition~\ref{QFBdef.1} below.  In particular, it is of bounded geometry and of maximal volume growth.        
\end{theorem}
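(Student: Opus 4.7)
The plan is to adapt the Melrose-type strategy developed by Kottke and the second author in \cite{KR2} for the reduced Hilbert scheme $\Hilb_0^n(\bbC^2)$ to a general Nakajima quiver variety $\mathfrak M_{\zeta}$. Since $\mathfrak M_{\zeta}$ is a hyperK\"ahler quotient of a flat quaternionic vector space $\bbM$ of quiver representations by a compact group $G$, the starting point is to analyze how $\zeta$-stable orbits can escape to infinity. Following \cite{Nakajima_1998}, such escape routes are organized by \emph{$\zeta$-destabilizations}: decompositions of the underlying graded vector space compatible with a splitting of a semistable representation into a direct sum of stable sub-representations for suitably restricted stability parameters. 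Each such decomposition should label a boundary stratum of an eventual compactification.

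First I would set up a combinatorial description of the strata at infinity. Each stratum is indexed by a decomposition of the dimension vector $\mathbf v = \mathbf v_0 + m_1 \mathbf v_1 + \cdots + m_r \mathbf v_r$, where each $\mathbf v_i$ corresponds to a $\zeta$-stable representation type appearing in the destabilization with multiplicity $m_i$. The proper genericity assumption on $\zeta$ should ensure that every smaller quiver variety $\mathfrak M_{\zeta, \mathbf v_i}$ appearing in this combinatorial data is itself smooth, making an induction on the total dimension feasible. Using a hyperK\"ahler slice theorem near semistable but non-stable orbits, I would show that in a neighbourhood of the stratum associated to such a decomposition, $\mathfrak M_{\zeta}$ fibres over a ``centre-of-mass'' quiver variety, with fibre a product of symmetric products $\Sym^{m_i} \mathfrak M_{\zeta, \mathbf v_i}$, up to perturbations that decay as one approaches the stratum.

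The heart of the argument is the local metric expansion: one must show that the Nakajima metric on such a neighbourhood differs from the model product metric by terms governed by the $\QFB$-type defining functions of a suitable compactification, with decay rates matching the $\QAC$ structure of Definition~\ref{QFBdef.1}. The compactification $\overline{\mathfrak M}_{\zeta}$ is then assembled from these local models; its boundary hypersurfaces correspond to the maximal destabilizations and its fibered corners to compositions of nested destabilizations, yielding a manifold with fibered corners in the sense of \cite{CDR}. Bounded geometry and maximal volume growth follow from the general $\QAC$ formalism of \cite{DM2018, CDR}, once the local models are shown to satisfy the required compatibility at corners.

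The main obstacle is establishing the smooth (polyhomogeneous) expansion of the metric at infinity. Solving the hyperK\"ahler moment map equation in a scaled neighbourhood of each stratum requires an inductive application of the implicit function theorem in a polyhomogeneous setting, where the inner factors are themselves quiver varieties whose $\QAC$ expansions are only available inductively. Organising this nested induction so that the polyhomogeneous regularity from earlier steps feeds cleanly into the next, while tracking the weighted conormal estimates in the $\QFB$ structure and verifying that no logarithmic terms or obstructions arise at the coincidence rates, is the most delicate part of the argument; it is also the place where the proper genericity of $\zeta$ must be used in its full strength to exclude degenerate configurations that would otherwise spoil the expansion.
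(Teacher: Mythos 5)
There is a genuine gap: the analytic core of your proposal --- the smooth polyhomogeneous expansion of the Nakajima metric near each stratum at infinity --- is deferred to an ``inductive implicit function theorem in a polyhomogeneous setting'' that you never carry out, and with your gluing-to-model-metrics strategy this is exactly where all the difficulty sits (weighted estimates, exclusion of logarithmic terms, compatibility of nested expansions). The paper sidesteps this entirely by a key observation absent from your proposal: the Nakajima metric is not compared to approximate product models, it \emph{is} the restriction of the flat Euclidean metric on the representation space $\bold{M}$ to the level set $\mu^{-1}(-\zeta)$, pushed down by the free $G$-action. So no moment map equation is ever solved asymptotically and no error terms need to be controlled. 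Instead one radially compactifies $\bold{M}$, extends the unitary $G$-action to $\overline{\bold{M}}$, and resolves it by iteratively blowing up the closed boundary strata indexed by conjugacy classes of stabilizer subgroups (the Albin--Melrose resolution), producing the $\QAC$ compactification $\widetilde{\bold{M}}$ on which the flat metric is already an exact $\QAC$ metric. The only thing left to prove is finite-dimensional: splitting $\mu=\hat{\mu}_m+\check{\mu}_m$ according to $\mathfrak g=\mathfrak g_m\oplus\mathfrak g_m^{\perp}$ and Taylor expanding in adapted coordinates near the blown-up strata (using that $\hat{\mu}_m$ vanishes quadratically along the stratum), one shows that the closure $\widetilde{\mu^{-1}(-\zeta)}$ is a p-submanifold of $\widetilde{\bold{M}}$ with fibered corners, whose boundary fibers are $\QAC$ compactifications of lower-depth quiver varieties $\hat{\mu}_m^{-1}(-\zeta_m)/G_m$ --- smooth precisely because $\zeta$ is properly generic. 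Restriction of an exact $\QAC$ metric to such a compatible submanifold is again exact $\QAC$, it descends through the free $G$-action, and bounded geometry and maximal volume growth then follow from the general $\QAC$/Lie-structure-at-infinity formalism; the smooth expansion comes for free rather than from a delicate PDE argument.

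Two secondary discrepancies are worth noting. First, your stratification by $\zeta$-destabilizations with symmetric-product fibers is a plausible heuristic picture (consistent with the known geometry of $\Hilb^n_0(\bbC^2)$), but the paper organizes the boundary by conjugacy classes of stabilizer subgroups of the $G$-action at infinity, with the induced decomposition $V=V^{(0)}\oplus (V^{(1)})^{\oplus \hat v_1}\oplus\cdots\oplus(V^{(r)})^{\oplus\hat v_r}$ of Nakajima; the fibers of the boundary fibrations are compactified (reduced) quiver varieties, not symmetric products, and the induction runs on the depth of the $G$-space rather than on total dimension. Second, the role of proper genericity is only to guarantee smoothness of these fiber quiver varieties (so that the inductive fibered-corners structure exists), not to rule out logarithmic terms or degenerate configurations in an expansion --- in the paper's approach no such analytic obstructions ever arise.
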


Our strategy to prove this result is strongly inspired by an approach developed by Melrose \cite{Melrose_CIRM, Melrose_London} to give a geometric proof of the result of Carron \cite{Carron2011} and show that the metric has a smooth expansion at infinity. 
 We start by radially compactifying the Nakajima quiver representation space $\bold M$. The action of the group $G$ of gauge transformations on $\bold M$ is unitary and thus extends to the radial compactification $\overline{\bold M}$. Using the result of \cite{AM2011}, we resolve the group action at the boundary by iteratively blowing up the boundary strata indexed by conjugacy classes of stabilizer subgroups of $G$. The resulting space $\widetilde{\bold M}$ is called the $\QAC$ compactification of $\bold M$. A careful analysis of the hyperK\"ahler moment map $\mu$ shows that the closure $\widetilde{\mu^{-1}(-\zeta)}$ of $\mu^{-1}(-\zeta)$ into $\widetilde{\bold M}$ is naturally a manifold with fibered corners with induced metric a $\QAC$ metric. Since the whole construction is $G$-equivariant and $G$ acts freely on $\widetilde{\mu^{-1}(-\zeta)}$, the metric descends to a $\QAC$ metric on $\mathfrak M_{\zeta}$. 
\begin{remark}
It was proven in \cite{Weiss2021} that there exist quiver varieties whose associated Nakajima metric is not $\QALE$ and it was asked whether the Nakajima metric on those varieties is $\QAC$. Our result gives a positive answer to their question. 
\end{remark}

Our analysis of the behaviour of the moment map near the boundary further implies that if $H$ is a boundary hypersurface of  $\widetilde{\bold M}$ and $\phi_H: H\cap \widetilde{\mu^{-1}(-\zeta)}  \to \Sigma_H$ is the fiber bundle of the corresponding boundary hypersurface of $\widetilde{\mu^{-1}(-\zeta)}$, then each fiber is the $\QAC$ compactification of a quiver variety of lower dimension. Our assumption that $\zeta$ is properly generic guarantees that the Nakajima quiver varieties appearing as the interiors of these fibers are smooth. 

The base $\Sigma_H$ turns out to be an incomplete $3$-Sasakian manifold and the induced metric $g_{S_H}$ is an exact wedge $3$-Sasakian metric. In order to be able to apply the results of \cite{KR1}, we need to show that the Hodge-deRham operator associated to some flat Euclidean vector bundle $E\to S_H$ has no $L^2$-cohomology in certain degrees. Specifically, referring to Theorem~\ref{bs.22} for further details, we prove 

\begin{theorem}
Let $\cS$ be a manifold with fibered corners of dimension $4n+3$ and $E\to \cS$ a nicely $\Sp(1)$-equivariant flat Euclidean vector bundle in the sense of Definition~\ref{inv.3}.  Suppose that $g_w$ is an exact wedge $3$-Sasakian metric on $\cS$ and let $\eth_w$ be the Hodge-deRham operator associated to $g_w$ and $E$.  Then for $k\in \{0,\ldots,n\}$, the $L^2$-kernel of $\eth_w$ is trivial when $\eth_w$ is acting on forms of degree $2k+1$. 
In particular, when $\cS$ is a closed manifold, this implies the vanishing theorem of Galicki-Salamon \cite{Galicki-Salamon1996}, namely that the space of harmonic forms in degree $2k+1$ is trivial for $k\in \{0,\ldots,n\}$.
\label{int.1}\end{theorem}

To prove this result, we follow the overall strategy of \cite{Galicki-Salamon1996}, which relies on a result of Tachibana \cite{Tachibana} concerning harmonic forms on a closed Sasakian manifold.  The original proof of Tachibana involves integrations by parts that seem difficult to justify in our singular setting.  We instead proceed differently with a new proof of Tachibana's results relying on transverse Hodge theory and the transverse Hard Lefschetz theorem \cite{EKA1986,EKA1990}, see in particular Remarks~\ref{van.5} and \ref{van.7b} below.  This new proof can be adapted to our singular setting by obtaining suitable versions of Hodge decomposition and the Hard Lefschetz theorem for the $L^2$-cohomology of a wedge K\"ahler  metric, namely Proposition~\ref{l2i.19} and Corollary~\ref{l2i.23} below.

The paper is organized as follows. In \S~\ref{qv.0} we review basic facts about Nakajima quiver varieties. In  \S~\ref{mwfc.0} we introduce manifolds with fibered corners and define $\QFB$ and wedge metrics. In  \S~\ref{cqac.0} we construct the $\QAC$ compactification of the Nakajima quiver representation space and use it to show that the natural hyperK\"ahler metric on $\mathfrak M_{\zeta}$ is an exact $\QAC$ metric. In \S~\ref{l2i.0}, we recall some important results concerning the $L^2$-cohomology of wedge metrics and derive the versions of the Hodge decomposition and the Hard Lefschetz theorem that we will need when these metrics are K\"ahler.   In \S~\ref{bs.0} we introduce exact wedge $3$-Sasakian metrics and show that their $L^2$-harmonic forms are $\Sp(1)$-invariant.  This is used in \S~\ref{van.0} to establish a version of Tachibana's results \cite{Tachibana} for our singular setting and prove Theorem~\ref{int.1}. Finally, in  \S~\ref{rcqv.0} we use the results of sections \ref{cqac.0} and \ref{van.0} along with the results of \cite{KR1,KR2} to prove the Vafa-Witten conjecture. 

\begin{Acknowledgement}  
We are grateful to Vestislav Apostolov and Gilles Carron for helpful discussions, as well as to an anonymous referee for pointing out the incompleteness of an argument appearing in an earlier version of this paper.  The second author was supported by a NSERC discovery grant and a FRQNT team research project grant.
\end{Acknowledgement}

\section{Quiver varieties}\label{qv.0}

Given a finite graph $\Gamma$ with $n$ vertices, let $H$ be the set of pairs consisting of an edge together with an orientation of it. For $h\in H$, let $\inc(h)$ denote the incoming vertex, $\out(h)$ the outgoing vertex and $\bar h$ the edge with reversed orientation.  We will allow for loops, that is, for edges $h$ such that $\inc(h)=\out(h)$.  Choose a subset $\Omega \subset H$ such that $\overline\Omega\cup\Omega = H$ and $\overline\Omega\cap\Omega = \emptyset$, where $\overline{\Omega}=\{\overline{h}\; | \; h\in\Omega\}$. Such an $\Omega$ corresponds to a choice of \emph{orientation} for the graph.  


Given a pair of Hermitian vector spaces $(V_k,W_k)$ for each vertex $k$, we define the \textbf{Nakajima quiver representation space} 
\begin{equation}
\bold{M}(\bold{v},\bold{w}):= \left( \bigoplus\limits_{h\in H} \Hom(V_{\out(h)},V_{\inc(h)})\right) \oplus \left(\bigoplus\limits_{k=1}^n \Hom(W_k,V_k) \oplus \Hom(V_k,W_k)\right),
\end{equation}
where 
\begin{equation*}
\bold v := (\dim_{\bbC} V_1,...,\dim_{\bbC} V_n), ~~ \bold w := (\dim_{\bbC} W_1,...,\dim_{\bbC} W_n).
\end{equation*}

If $B_{h} \in \Hom(V_{\out(h)},V_{\inc(h)}),~ i_k \in \Hom(W_k,V_k)$ and $j_k \in \Hom(V_k,W_k)$, then the elements of $\bold{M}$ are denoted as triples $(B,i,j)$ where $B$ denotes the collection $(B_h)_{h\in H}$, $i$ the collection $(i_k)_{1\le k \le n}$ and $j$ the collection $(j_k)_{1\le k \le n}$. 

We define on $\bold{M}$ the holomorphic symplectic form 

\begin{equation}\label{symplectic form}
\omega_{\bbC}((B,i,j),(B',i',j')) := \sum\limits_{h\in H} \tr(\epsilon(h)B_hB'_{\bar h}) + \sum\limits_{k=1}^n \tr(i_kj_k' - i_k'j_k),
\end{equation}
where $\epsilon(h) =1 $ if $h \in \Omega$ and $\epsilon(h) = -1$ if $h\in\overline{\Omega}$. The symplectic vector space $\bold{M}$ decomposes into the sum $\bold{M} = \bold{M}_{\Omega}\oplus\bold{M}_{\overline{\Omega}}$ of Lagrangian subspaces: 

\begin{equation}
\begin{split}
\bold{M}_{\Omega} :=& \left(\bigoplus\limits_{h\in \Omega} \Hom(V_{\out(h)},V_{\inc(h)})\right) \oplus \left(\bigoplus\limits_{k=1}^n \Hom(W_k,V_k)\right),\\
\bold{M}_{\overline\Omega} :=& \left(\bigoplus\limits_{h\in \overline\Omega} \Hom(V_{\out(h)},V_{\inc(h)})\right) \oplus \left(\bigoplus\limits_{k=1}^n \Hom(V_k,W_k)\right).
\end{split}
\end{equation}

Given this decomposition, we introduce a new complex structure $J$ given by $J\cdot (m,m') = (-m'^{\dagger}, m^{\dagger})$ for $(m,m')\in \bold{M}_{\Omega}\oplus \bold{M}_{\overline\Omega}$. The complex structures $I,J$ together endow $\bold{M}$ with a hyperK\"ahler structure. Given $\omega_{\bbC}$ and $J$ we define the metric 

\begin{equation}
g((B,i,j),(B',i',j')) := \omega_{\bbC}((B,i,j),J\cdot(B',i',j')) = \sum\limits_{h\in H} \tr(B_hB_h^{'\dagger}) + \sum\limits_{k=1}^n \tr(i_k i_k^{\dagger} + j_k^{'\dagger}j_k).
\end{equation}

The group $G = \prod \gU(V_k)$ acts on $\bold{M}$. The action, which is given by 

\begin{equation*}
g\cdot(B,i,j) = (g_{\inc(h)}B_hg_{\out(h)}^{-1}, g_k i_k,j_k g_k^{-1}),
\end{equation*}
preserves the metric and the hyperK\"ahler structure. Let $\mu$ be the corresponding hyperK\"ahler moment map. Writing $\mu = (\mu_{\bbR},\mu_{\bbC})$, we have that 

\begin{equation}
\begin{split}
\mu_{\bbR}(B,i,j) &= \frac{\sqrt{-1}}{2}\left( \sum\limits_{h\in H: k = \inc(h)} B_hB_h^{\dagger} - B_{\bar{h}}^{\dagger}B_{\bar{h}} +i_ki_k^{\dagger} - j_k^{\dagger}j_k\right)_k \in \bigoplus\limits_k \mf{u}(V_k) = \mf{g},\\
\mu_{\bbC}(B,i,j) &= \left(\sum\limits_{h\in H: k = \inc(h)} \epsilon(h)B_hB_{\bar{h}} + i_k j_k\right)_k \in \mf{gl}(V_k) = \mf{g}\otimes\bbC.
\end{split}
\end{equation}
Here, we identify $\mathfrak g$ with its dual $\mathfrak g^{*}$. When the graph $\Gamma$ has loops, we also consider the reduced vector space 
$$
\bold{M}_{\reduced}:= \{ (B,i,j)\in \bold{M} ~|~ \Tr(B_h)=0 \; \mbox{whenever $h$ is a loop}\}\subset \bold{M}.  
$$ 
Notice that the induced metric on $\bold{M}_{\reduced}$ is also hyperK\"ahler and that $\bold{M}_{\reduced}$ is invariant under the action of $G$.  

\begin{definition}\label{Quiver variety}
Given an element $\zeta = (\zeta_{\bbR},\zeta_{\bbC}) \in Z\oplus ( Z\otimes\bbC)$, where $Z\subset \mf{g}$ denotes the center, consider the hyperK\"ahler quotient 

\begin{equation}
\mathfrak M_{\zeta} := \{(B,i,j) \in \bold{M}~|~\mu(B,i,j) = -\zeta\}/G.
\end{equation}

This is the \emph{quiver variety} associated to $H$ and $\zeta$. If $H$ contains loops, we also consider the reduced quiver variety 
\begin{equation}
\mathfrak{M}_{\zeta}^{\reduced} := \{(B,i,j) \in \bold{M}_{\reduced}~|~\mu(B,i,j) = -\zeta\}/G.
\end{equation}
\end{definition}

We want to know when such a quiver variety is smooth. Let $A$ be the adjacency matrix of the graph $\Gamma$, meaning 

$$
A_{i,j} = A_{j,i} = \textrm{number of edges joining vertex }i\textrm{ to vertex }j
$$
and $C = 2\Id - A$ the generalized Cartan matrix. Since the center $Z$ of $\mf{g}$ is the product of scalar matrices on $V_k$, we may identify $Z$ with a subspace of $\bbR^n$. 
Let us introduce the following notation: 

\begin{equation}
\begin{split}
R_+ :=& \{\theta = (\theta_k)\in \bbZ^n_{\ge 0} | ^t\theta C \theta \le 2\} \backslash \{0\}, \\
R_+(\bold v) :=& \{\theta\in R_+ | \theta_k \le \dim_{\bbC} V_k~\forall~k\},\\
D_{\theta}:=& \left\{ x = (x_k) \in \bbR^n \Big| \sum\limits_k x_k\theta_k =0 \right\}\textrm{ for } \theta\in R_+,\\
Z_s(\mathfrak g) :=& \bigcup\limits_{\theta\in R_+(\bold{v})} \bbR^3\otimes D_{\theta}.
\end{split}
\label{gen.1}
\end{equation}
\begin{theorem}[\cite{Nakajima1994} Theorem 2.8 and Corollary 4.2]\label{generic smoothness}
Suppose that

\begin{equation}\label{generic}
\zeta \in (\bbR^3\otimes\bbR^n) \backslash Z_s(\mathfrak g).
\end{equation}

Then the quiver variety $\mathfrak{M}_{\zeta}$ is nonsingular and the induced hyperK\"ahler metric is complete. We say parameters $\zeta$ satisfying \eqref{generic} are \textbf{generic}. Given generic parameters $\zeta \neq \zeta'$, $\mathfrak M_{\zeta}$ and $\mathfrak M_{\zeta'}$ are diffeomorphic. 
\end{theorem}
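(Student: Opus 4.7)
The plan is to follow Nakajima's original strategy, which splits the theorem into three parts: smoothness of $\mathfrak M_\zeta$, completeness of the induced metric, and invariance of the diffeomorphism type as $\zeta$ varies through the generic locus.

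For smoothness, the goal is to show that $G$ acts freely on $\mu^{-1}(-\zeta)$; once this is established, the standard identity $\mathrm{coker}(d\mu_p) \cong (\mathrm{Lie}\,G_p)\otimes\bbR^3$ for a hyperK\"ahler moment map forces $-\zeta$ to be a regular value on the preimage, so the hyperK\"ahler quotient by the free compact action is automatically smooth. To control stabilizers, I would argue by contradiction: suppose $\xi\in \mathfrak g$ fixes $(B,i,j)\in\mu^{-1}(-\zeta)$ infinitesimally. Since $\xi$ is skew-Hermitian, it admits an orthogonal spectral decomposition $V_k = \bigoplus_{\lambda} V_k^{(\lambda)}$ preserved by every $B_h$, and the equations $\xi\cdot i_k = 0$, $\xi\cdot j_k = 0$ force $i_k(W_k)\subset V_k^{(0)}$ and $j_k$ to vanish on $V_k^{(\lambda)}$ for $\lambda\neq 0$. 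Setting $\theta^{(\lambda)}_k := \dim_{\bbC} V_k^{(\lambda)}$ and pairing $\xi$ with $\mu(B,i,j)+\zeta = 0$ eigenspace by eigenspace produces, for each nonzero $\lambda$, a linear relation $\sum_k \zeta_k \theta^{(\lambda)}_k = 0$ in all three components, placing $\zeta$ in $\bbR^3\otimes D_{\theta^{(\lambda)}}$. The main obstacle here is to show that each such nonzero $\theta^{(\lambda)}$ lies in $R_+(\bold v)$, \ie satisfies the Kac--Moody bound ${}^t\theta C\theta \leq 2$; this requires input from the representation theory of quivers through Kac's theorem, which identifies the dimension vectors of indecomposable quiver representations with the positive roots of the associated Kac--Moody algebra.

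Completeness is then essentially formal: $\bold M$ is flat Euclidean hence complete, $\mu^{-1}(-\zeta)\subset\bold M$ is closed and therefore complete in the induced metric, and $G = \prod_k \gU(V_k)$ is compact acting isometrically and freely, so the submersion quotient metric on $\mathfrak M_\zeta$ is complete. For the diffeomorphism invariance, I would first observe that the generic locus $U := \bbR^{3n}\setminus Z_s(\mathfrak g)$ is path-connected because each removed subspace $\bbR^3\otimes D_\theta$ has real codimension at least $3$ in $\bbR^{3n}$. Given a smooth path $\zeta_t\in U$ from $\zeta$ to $\zeta'$, I would form the family
\begin{equation*}
\widetilde{\mathfrak M} := \{(B,i,j,t)\in \bold M\times[0,1] : \mu(B,i,j) = -\zeta_t\}/G,
\end{equation*}
which is smooth by the parameterized version of the stabilizer argument above. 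A $G$-equivariant lift of $\partial_t$, built from the horizontal complement to the $G$-orbits together with a right inverse to $d\mu$ on the slice, descends to a vector field on $\widetilde{\mathfrak M}/[0,1]$ whose flow -- which is complete because its pointwise norm can be bounded uniformly using the fact that $\zeta_t$ moves in a compact region of $U$ -- realizes the required diffeomorphism $\mathfrak M_\zeta \cong \mathfrak M_{\zeta'}$.

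The dominant difficulty is the stabilizer analysis in the smoothness step: once one knows that each stabilizer decomposition corresponds to an element of $R_+(\bold v)$, the rest of the proof reduces to standard compact-group quotient theory together with a parameterized Moser-type trivialization argument. Everything downstream -- regularity of the level set, smoothness of the quotient, completeness, and connectedness of the generic parameter space -- is either formal hyperK\"ahler-reduction yoga or a direct dimension count.
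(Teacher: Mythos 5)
The paper itself contains no proof of this statement---it is imported verbatim from Nakajima's paper (Theorem 2.8 and Corollary 4.2 there)---so your sketch has to be judged on its own merits against Nakajima's argument. Your reduction of smoothness to freeness of the $G$-action, and the eigenspace analysis showing that a nontrivial infinitesimal stabilizer $\xi$ forces each $B_h$ to preserve the spectral decomposition, $i$ to land in $V^{(0)}$, $j$ to annihilate $V^{(\lambda)}$ for $\lambda\neq 0$, and hence $\zeta\in\bbR^3\otimes D_{\theta^{(\lambda)}}$ by tracing all three moment map equations over $V^{(\lambda)}$, is correct. The genuine gap is exactly the step you flag: there is no reason for $\theta^{(\lambda)}=\dim V^{(\lambda)}$ itself to satisfy ${}^t\theta C\theta\le 2$, and Kac's theorem does not bridge this. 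Kac's theorem classifies dimension vectors of indecomposable representations of the path algebra, with no moment map constraints; and if you try to repair the argument by decomposing $V^{(\lambda)}$ into indecomposable summands, the trace argument survives only for $\mu_{\bbC}$---a direct summand of the underlying (doubled) quiver representation need not be invariant under the adjoints $B_h^{\dagger}$, so the relation $\zeta_{\bbR}\cdot\theta'=0$ for the summand's dimension vector $\theta'$ is lost, and you need $\zeta$ to lie in $\bbR^3\otimes D_{\theta'}$ in \emph{all three} slots for the same $\theta'$. The standard way to close this (Nakajima, following Kronheimer) is a different mechanism: extract by induction a piece on which the solution has stabilizer exactly the scalars; the hyperK\"ahler quotient of $\bold{M}(\theta',0)$ by $U(\theta')/U(1)$ through that point is then nonempty and smooth of real dimension $4-2\,{}^t\theta' C\theta'$, whose non-negativity yields the root bound. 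Alternatively one can invoke results on deformed preprojective algebras (Crawley--Boevey--Holland), but some such input must be supplied explicitly; it is not covered by ``standard compact-group quotient theory.''

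The completeness argument is fine (closed level set in flat space, compact group acting freely by isometries, Riemannian submersion). For diffeomorphism invariance, the codimension-three count and connectedness of the generic locus are correct, but the completeness of the flow of the lifted vector field is the entire difficulty and your justification does not hold up: the fibers are non-compact, so the family $\widetilde{\mathfrak M}\to[0,1]$ is not a proper submersion and Ehresmann does not apply directly; the minimal-norm lift of $\partial_t$ has norm governed by the inverse Gram matrix of the vectors $I_a\xi_*$, and this can degenerate along sequences escaping to infinity in $\mu^{-1}(-\zeta_t)$ that approach rays of the cone $\mu^{-1}(0)$ with nontrivial stabilizer, so compactness of $\{\zeta_t\}$ alone gives no uniform bound. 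One needs genuine control of the geometry at infinity (Nakajima's algebraic/GIT description of the quotients, or, in the spirit of this paper, the QAC structure of the compactified level sets) to conclude that the flow exists for all time.
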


\section{Manifolds with fibered corners} \label{mwfc.0}

Let $M$ be a compact manifold with corners in the sense of \cite{Grieser, MelroseMWC, Melrose1992} and  $\mathcal M_1(M)$ be its set of codimension $1$ corners (also called boundary hypersurfaces). 
\begin{definition}\label{MWFC}
For each $H\in\cM_1(M)$, let $\phi_H:H\to S_H$ be a fiber bundle over a compact manifold with corners $S_H$ and denote by $\phi$ the collection of these fiber bundles. We say that $\phi$ is an \textbf{iterated fibration structure} for $M$ if there exists a partial ordering on the boundary hypersurfaces $H$ such that:
\begin{itemize}
\item Any subset $\mathcal I \subset \cM_1(M)$ of boundary hypersurfaces such that $\bigcap\limits_{H\in \mathcal I} H \neq \emptyset$ is totally ordered;
\item If $H_1,H_2\in \cM_1(M)$ are such that $H_1 <H_2$, then $H_1\cap H_2 \neq \emptyset$, $\phi_{H_1}|_{H_1\cap H_2}: H_1\cap H_2 \to S_{H_1}$ is a surjective submersion, $S_{H_2H_1}:= \phi_{H_2}(H_1\cap H_2)$ is a boundary hypersurface of the manifold with corners $S_{H_2}$ and there is a surjective submersion $\phi_{H_2H_1} : S_{H_2H_1} \to S_{H_1}$ such that $\phi_{H_2H_1}\circ\phi_{H_2} = \phi_{H_1}$ on $H_1\cap H_2$;
\item The boundary hypersurfaces of $S_{H}$ are given by $S_{HH'}$ for $H' < H$. 
\end{itemize}
In this case, we say that the pair $(M,\phi)$ is a \textbf{manifold with fibered corners}. 
\end{definition}

For each boundary hypersurface $H$, let $x_{H}\in C^{\infty}(M)$ be a boundary defining function, that is, $x_{H}$ takes nonnegative values, $x_{H}^{-1}(0) = H$ and $dx_{H}$ is nowhere zero along $H$. The boundary defining functions $x_{H}$ are \textbf{compatible} with the iterated fibration structure $\phi$ if $x_{H}$ restricted to $H'$ is constant along the fibers of $\phi_{H'}:  H'\to S_{H'}$ whenever $H' > H$. 
\begin{definition}\label{QFBvf}
Let $v = \prod\limits_{H\in \cM_1(M)} x_{H}$ be a total boundary defining function for the manifold with fibered corners $(M,\phi)$. For such a choice, the space $\mathcal V_{\QFB}(M)$ of \textbf{quasi-fibered boundary} $(\QFB)$ vector fields consists of smooth vector fields $\xi$ in $M$ such that:
\begin{enumerate}
\item  $\xi$ is tangent to the fibers of $\phi_{H}:H \to S_{H}$ for each boundary hypersurface $H$ of $M$;
 \item $\xi v \in v^2C^{\infty}(M)$.
\end{enumerate}
\end{definition}
\begin{remark}
The condition $\xi v \in v^2C^{\infty}(M)$ clearly depends on the choice of the total boundary defining function $v$, but by \cite[Lemma~1.1]{KR1}, two total boundary defining functions $v$ and $v'$ give the same space of $\QFB$ vector fields if and only if the function $\frac{v}{v'}$ is constant on the fibers of $\phi_{H}$ for all $H$.  When this is the case, we say that the total boundary defining functions $v$ and $v'$ are \textbf{$\QFB$ equivalent}. 
\end{remark}

As explained in \cite{CDR}, there exists a natural bundle $^{\QFB}TM\to M$ called the $\QFB$ tangent bundle and a natural map
\begin{equation}
a_{\QFB}: ^{\QFB}TM\to TM
\end{equation}
inducing a canonical isomorphism
\begin{equation}
(a_{\QFB})_{*}:C^{\infty}(M; ^{\QFB}TM) \to \mathcal V_{\QFB}(M).
\end{equation}
This gives ${}^{\QFB}TM\to M$ a structure of Lie algebroid with anchor map $a_{\QFB}$.   

\begin{definition}
A \textbf{quasi-fibered boundary} $(\QFB)$ metric on a manifold with fibered corners $(M,\phi)$ equipped with a choice of total boundary defining function $v$ is a Riemannian metric on the interior of $M$ of the form 
\begin{equation}
(a_{\QFB})_{*}(h|_{M\backslash \pa M})
\end{equation}
for some choice of bundle metric $h\in C^{\infty}(M; S^2(^{\QFB}T^{*}M))$ for the vector bundle $^{\QFB}TM$. In this case we say that the manifold with corners $M$ is the $\QFB$ \textbf{compactification} of the corresponding Riemannian manifold. 
\label{QFBdef.1}\end{definition}
A $\QFB$ metric on $(M,\phi)$ such that for each maximal hypersurface $H$, $S_{H} = H$ and $\phi_{H} = \Id$ will be called a \textbf{quasi-asymptotically conical} $(\QAC)$ metric. 
For the purposes of the paper, we introduce two more types of metrics on manifolds with fibered corners. If we drop condition $(2)$ from definition \ref{QFBvf} we obtain the Lie algebra of edge vector fields $\mathcal V_e(M)$ and the edge tangent bundle $^e TM\to M$, a naturally associated Lie algebroid with anchor map $a_e: {}^{e}TM\to TM$. As with $\mathcal V_{\QFB}(M)$, to the Lie algebra $\mathcal V_e(M)$ one can associate the class of edge metrics on $M\backslash \pa M$. Both $\QFB$ metrics $(M\backslash \pa M, g_{\QFB})$ and edge metrics $(M\backslash \pa M, g_e)$ are examples of Riemannian manifolds with Lie structure at infinity in the sense of \cite{ALN04}. By \cite{ALN04} and \cite{Bui}, such metrics are complete metrics of infinite volume with bounded geometry.  Moreover, by \cite{ALN04}, two such metrics $g$ and $g'$ corresponding to a fixed Lie structure at infinity are quasi-isometric, meaning that there exists a constant $C>0$ such that
\begin{equation*}
\frac{g}{C} < g' <Cg.
\end{equation*}

In order to introduce \textbf{wedge} metrics, we define the wedge cotangent bundle 
\begin{equation}
^w T^{*}M := v(^e T^{*}M)
\end{equation}
with $v$ a total boundary defining function. 

\begin{definition}
A \textbf{wedge} metric (also called incomplete iterated edge metrics in \cite{ALMP2012}) on a manifold with fibered corners $(M,\phi)$ is a Riemannian metric $g_w$ on the interior of $M$ of the form 
\begin{equation}
g_w = v^2g_e
\end{equation}
for some edge metric $g_e$.
\end{definition}

A wedge metric $g_w$ is of finite volume and is geodesically incomplete, so the pair $(M \backslash \pa M, g_w)$ is not a Riemannian manifold with Lie structure at infinity. This is evident from the fact that the wedge vector fields $\xi\in \CI(M;{^wTM})$ are not preserved by the Lie bracket.  When $M$ is compact, the metric completion of $(M\setminus\pa M, g_w)$ is  the singular space $\widehat{M}_\phi$ obtained from the manifold with fibered corners $(M,\phi)$ by collapsing the fibers of $\phi_H$ onto their base for each boundary hypersurface $H\in \cM_1(M)$.  In particular, the metric completion is a \textbf{smoothly stratified space} in the sense of \cite{ALMP2012}, namely a singular space of the form $\widehat{M}_\phi$ for some manifold with fibered corners $(M,\phi)$ (the \textbf{resolution} of the smoothly stratified space $\widehat{M}_\phi$).  If $q_\phi: M\to \widehat{M}_\phi$ is the natural map, the (open) strata are $M\setminus \pa M$ (the regular stratum) and $\mathfrak{s}_H:= q_\phi(\phi_H^{-1}(S_H\setminus\pa S_H))$ for $H\in \cM_1(M)$ with closure $\overline{\mathfrak{s}}_H= q_\phi(\phi_H^{-1}(S_H))$ corresponding to the smoothly stratified manifold associated to $S_H$ equipped with the iterated fibration structure induced from $\phi$.  

In this paper, we will need to use exact wedge metrics in the sense of \cite[Definition~8.4]{KR1}.  To explain what this is, we need first to recall what are wedge metrics of product type. Let $c_H: H \times [0,\delta_{H}) \to M$ be a collar neighbourhood of $H$ compatible with the boundary defining functions in the sense that $c_H^{*}x_H$ corresponds to the projection $\pr_2: H \times [0,\delta_H) \to [0,\delta_H)$ and $c_H^{*}x_{H'}$ is the pullback of a function on $H$ for $H'\ne H$ with $H'\cap H\ne \emptyset$. Choose a connection on the fiber bundle $\phi_H : H \to S_H$.  Let $\kappa_H$ be a family of fiberwise edge metrics in the fibers of $\phi_H:H_H \to S_H$. Using the connection on $\phi_H$, this family can be lifted to a vertical symmetric $2$-tensor on $H\backslash \pa H$. Let $\pr_1:H \times [0,\delta_{H}) \to H$ be the projection onto the first factor. A \textbf{product type} wedge metric near $H$ is given by 
\begin{equation}
g_w = \rho_H^2(dx_H^2 + \pr_1^{*}\phi_H^{*}g_{S_H} + x_H^2 \pr_1^{*}\kappa_{w,H}), \quad with \quad \rho_H=\prod_{H'<H} x_{H'},
\end{equation}
where $g_{S_H}$ is an edge metric on $S_H$ and $\kappa_{w,H}=\left(\frac{v_H^2}{x_H^2}\right)\kappa_H$ with $v_H=\prod_{H'\ge H} x_{H'}$ is a $2$-tensor inducing a wedge metric on the fibers of $\phi_H:H_I\to S_H$ in such a way that $\phi_H^*g_{S_H}+\kappa_{w,H}$ is a Riemannian metric turning $\phi_H$ into a Riemannian submersion onto $(S_H,g_{S_H})$.   Notice in particular that $g_{S_H,w}:= \rho_H^2g_{S_H}$ is the natural wedge metric induced by $g_w$ on $S_H$.  
\begin{definition}
An \textbf{exact} wedge metric is a wedge metric which is of product type near $H$ up to a term in $x_HC^{\infty}(M; S^2(^wT^{*}M)) $ for each boundary hypersurface $H$ of $M$. 
\end{definition}

There is also a notion of product type and exact $\QFB$ metrics. The definition of a product type $\QFB$ metric is very similar to the one of a product wedge metric, but this time, instead of the level sets of $x_H$, we use the level sets of some total boundary defining function $\prod_H x_H$ $\QFB$ equivalent to the total boundary defining function $v$ defining $\cV_{\QFB}(M)$.  To indicate that we could take different total boundary defining functions compatible with the Lie algebra of $\QFB$ vector fields for different boundary hypersurfaces, we will denote this total boundary defining function by $u_H$.  With this understood, we consider the open set 
\begin{equation}
\mathcal U_H = \{ (p,\tau) \in H\times [0,\delta_H) \quad | \quad \prod\limits_{H'\neq H} x_{H'}(p) > \frac{\tau}{\delta_H} \} \subset H\times [0,\delta_H)
\end{equation}
with natural diffeomorphism 
\begin{equation}
\begin{array}{lccc}
\psi_H: & (H\backslash \pa H)\times [0,\delta_H) &\to &\mathcal U_H\\
&(p, t) &\mapsto & (p,t\prod\limits_{H'\neq H }x_{H'}(p)).
\end{array}
\end{equation}

Following the definition of a product type wedge metric, we choose a connection for the fiber bundle $\phi_H: H \to S_H$ and let $g_{S_H}$ be a wedge metric. Let $\kappa_H$ be a family of fiberwise $\QFB$ metrics in the fibers of $\phi_H:H \to S_H$ and use the connection to lift them to a vertical symmetric $2$-tensor on $H\backslash \pa H$. In $\mathcal U_H$  seen as a subset of $H\times [0,\delta_H)$, a $\QFB$ metric of \textbf{product type} near $H$ is a metric of the form
\begin{equation}
g_{\QFB} = \frac{\,du_H^2}{u_H^4} + \frac{\pr_1^{*}\phi_H^{*}g_{S_H}}{u_H^2} + \pr_1^{*}\kappa_H.
\label{pt.1}\end{equation}
More generally, an \textbf{exact} $\QFB$ metric is a $\QFB$ metric which is of product type near each hypersurface $H$ up to a term in $x_HC^{\infty}(M; S^2(^{\QFB}T^{*}M))$.

\section{Compactification of Nakajima quiver representation spaces} \label{cqac.0}

For $\zeta=0$, the quiver variety $\mathfrak{M}_{\zeta}$ is singular.  However, as described in \cite{Nakajima1994}, it has a natural structure of stratified space.  Let $\bold{q}: \bold{M}\to \bold{M}/G$ be the natural quotient map.  

\begin{lemma}
A point $m\in \mu^{-1}(-\zeta)$ is a singular point of $\mu^{-1}(-\zeta)$ if and only if $\bold{q}(m)\in \mathfrak{M}_{\zeta}$ is a singular point of $\mathfrak{M}_{\zeta}$.
\label{cqac.1}\end{lemma}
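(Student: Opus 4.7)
The plan is to show that both singularity conditions in the lemma are equivalent to the stabilizer $G_m\subset G$ of $m$ being non-trivial, and then to close the apparent gap using connectedness of $G_m$.

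First I would characterize the smooth locus of $\mu^{-1}(-\zeta)$. Since $\mu^{-1}(-\zeta)$ is cut out by the smooth map $\mu+\zeta$, the point $m$ is smooth if and only if $d\mu_m$ is surjective. A standard moment-map calculation, using the identity $d(\mu_\xi) = \iota_{\xi_{\bold{M}}}\omega_I$ for each of the three K\"ahler forms associated with $I$, $J$, $K$, identifies the cokernel of $d\mu_m$ with the kernel of the map $\mathfrak{g}^{\oplus 3}\to T_m\bold{M}$ sending $(\xi_1,\xi_2,\xi_3)$ to $I\xi_{1,\bold{M}}(m)+J\xi_{2,\bold{M}}(m)+K\xi_{3,\bold{M}}(m)$. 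Using that $I$, $J$, $K$ are anticommuting isometries together with the equivariance of $\mu$, one sees that this kernel vanishes if and only if the infinitesimal action $\mathfrak{g}\to T_m\bold{M}$ is injective, equivalently if and only if $\mathfrak{g}_m=0$.

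Second, I would invoke a hyperK\"ahler slice theorem of Dancer-Swann type: near $\bold{q}(m)$, the quotient $\mathfrak M_\zeta$ is locally modelled on the hyperK\"ahler reduction of a normal slice $N_m$ (a finite-dimensional $G_m$-representation) by the stabilizer $G_m$. If $G_m=\{e\}$ this model is smooth; otherwise the reduction has a conical singularity at the origin, since the $G_m$-action on $N_m$ is the tangential action of $G_m$ transverse to the $G$-orbit and is effective whenever $G_m\neq\{e\}$. Thus $\bold{q}(m)$ is a smooth point of $\mathfrak M_\zeta$ if and only if $G_m=\{e\}$.

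The two characterizations agree provided $\mathfrak{g}_m=0$ forces $G_m=\{e\}$, and not merely $G_m$ finite. This is where the quiver structure enters: by Kempf-Ness, $G$-orbits in $\mu^{-1}(-\zeta)$ are in bijection with closed $G^{\bbC}$-orbits on the $\zeta$-polystable locus of $\bold{M}$, and by the classical theory of polystable quiver representations the stabilizer of any such point is isomorphic to a product of general linear groups $\prod_i \operatorname{GL}_{r_i}(\bbC)$, whose maximal compact subgroup $\prod_i \gU(r_i)$ is connected. So $G_m$ is connected, hence $G_m=\{e\}$ if and only if $\mathfrak{g}_m=0$, and the lemma follows. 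The main obstacle in the argument is this connectedness step, relying as it does on the polystability theory for quiver representations; the moment-map surjectivity calculation and the slice theorem are standard tools in hyperK\"ahler quotient theory.
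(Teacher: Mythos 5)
Your argument is correct, and its logical skeleton is the same as the paper's: a point of the level set is singular iff $d\mu_m$ fails to be surjective iff $\mathfrak g_m\neq 0$ (your first step, using centrality of $\zeta$ and equivariance, is exactly the paper's appeal to the definition of a hyperK\"ahler moment map), the quotient is singular at $\bold q(m)$ iff $G_m\neq\{e\}$, and the two are reconciled because stabilizers of points of $\mu^{-1}(-\zeta)$ cannot be nontrivial and finite. The genuine difference is how that last, crucial fact is established: the paper simply cites \cite[p.~391]{Nakajima1994} (equivalently \cite[Lemma~6.5]{Nakajima1994}, which identifies $G_m$ with a product of unitary groups, cf.\ \eqref{cqac.7}), whereas you rederive the connectedness of $G_m$ via Kempf--Ness, identifying $m$ with a $\zeta$-polystable representation whose automorphism group is a product of general linear groups, so that $G_m$ is a connected maximal compact subgroup. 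This is an equivalent structural statement reached by a complex-algebraic route, and it is consistent with the decomposition the paper uses later. You also make the quotient-side equivalence explicit through a Dancer--Swann type hyperK\"ahler slice theorem, where the paper is terse; this has the additional benefit of justifying the step (implicit in the paper as well) that non-surjectivity of $d\mu_m$ actually produces a singular point of $\mu^{-1}(-\zeta)$ and of $\mathfrak M_\zeta$, rather than merely a failure of the submersion criterion. In short, your proof buys self-containedness and an explanation of the cited stabilizer structure, at the cost of invoking polystability theory and the slice theorem; the paper's proof is shorter because it leans on Nakajima's Lemma~6.5, which it needs anyway for \eqref{cqac.5}--\eqref{cqac.7}. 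Two minor points to tighten: the effectiveness of the $G_m$-action on the slice (needed to conclude a genuine singularity when $G_m\neq\{e\}$) deserves a word, though it follows from the same stabilizer structure since a finite subgroup of $\Sp(N_m)$ contains no pseudo-reflections; and both your argument and the paper implicitly assume the $G$-action on $\bold M$ has no positive-dimensional kernel, the standard situation for framed quivers.
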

\begin{proof}
If $m\in \mu^{-1}(-\zeta)$ is singular, then the differential of $\mu$ at $m$ is not surjective.  By the definition of a hyperK\"ahler moment map, this means that the stabilizer of $m$ is non-trivial, so the corresponding point $x\in \mu^{-1}(-\zeta)/G$ is singular.  Conversely, if $\bold{q}(m)\in \mu^{-1}(-\zeta)/G$ is singular, then the stabilizer $G_m$ of  $m$ is non-trivial.  By \cite[p.391]{Nakajima1994}, this can only happen if the Lie algebra $\mathfrak{g}_m$ of $G_m$ is non-trivial.  By the definition of a hyperK\"ahler moment map, this means that $d\mu$ is not surjective at $m$, hence that $\mu^{-1}(-\zeta)$ is not smooth at $m$.
\end{proof}

More generally, the quotient $\bold{M}/G$ is singular, but it is a smoothly stratified space by \cite{AM2011} with strata given by
$$
     \bold{M}^I/G= \{m\in \bold{M}\; | \; G_m\in I\}/G
$$
for $I$ a conjugacy class of a stabilizer subgroup.  Strictly speaking, the result of \cite{AM2011} is for compact manifolds with corners, but since the action of $G$ naturally extends to the radial compactification $\overline{\bold{M}}$ of $\bold{M}$, it suffices to apply the result of \cite{AM2011} to $\overline{\bold{M}}$ and restrict it to $\bold{M}$.  Even if $\bold{M}$ itself is smooth, this induces a corresponding structure of smoothly stratified space with strata given by 
$$
  \bold{M}^I=  \{m\in \bold{M}\; | \; G_m\in I\}
$$
for $I$ a conjugacy class of a stabilizer subgroup.  The regular stratum is the one corresponding to the conjugacy class of the trivial stabilizer $G_m=\{\Id\}$, while the deepest stratum is the origin and corresponds to the conjugacy class of the stabilizer $G_m=G$.  Thus, if $m\in \bold{M}\setminus\{0\}$ is such that $\bold{q}(m)\in\bold{M}/G$ is singular, its stabilizer $G_m$ is non-trivial and is strictly contained in $G$ unless $m$ is contained in the orthogonal complement of $\bold{M}_{\reduced}$, in which case $G_m=G$. Let $T_m\cO_m\subset T_m\bold{M}$ be the tangent space at $m$ of the orbit $\cO_m$ of $m$.  Let $\widehat{\bold{M}}_m$ be the orthogonal complement in $T_m \bold{M}$ of the $\bbH$-module
$$
     \bbH T_m\cO_m= T_m\cO_m \oplus I_1T_m\cO_m \oplus I_2 T_m\cO_m\oplus I_3 T_m\cO_m.
$$ 
Clearly, $\widehat{\bold{M}}_m$ is itself a $\bbH$-module and the action of $G$ on $\bold{M}$ induces an action of $G_m$ on $\widehat{\bold{M}}_m$.  Using the canonical identification of $T_m\bold{M}$ with $\bold{M}$, $\widehat{\bold{M}}_m$ can be seen as an $\bbH$-submodule of $\bold{M}$.  Moreover, since the action of $G$ commutes with the natural $\bbR^+$ action on $\bold{M}$, there is a canonical identification $\widehat{\bold{M}}_{\lambda m}= \widehat{\bold{M}}_{m}$ for any $\lambda\in \bbR^+$.  
\begin{lemma} \label{moment map restriction}
Fix $m\in \mu^{-1}(0)\backslash\{0\} \subset \bold{M}\backslash\{0\}$ with non-trivial stabilizer $G_m$. The restriction $\hat{\mu}_m$ of the moment map $\mu$ to $\widehat{\bold{M}}_m$ induces a hyperK\"ahler moment map with values in $\mathfrak{g}^*_m$ for the action of $G_m$ on $\widehat{\bold{M}}_m$, where $\mathfrak{g}_m$ is the Lie algebra of $G_m$.
\label{cqac.2}
\end{lemma}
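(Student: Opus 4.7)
The plan is to verify three things: $(i)$ $\widehat{\bold{M}}_m$ is a $G_m$-invariant $\bbH$-submodule of $\bold{M}$; $(ii)$ the hyperK\"ahler structure of $\bold{M}$ restricts to one on $\widehat{\bold{M}}_m$; and $(iii)$ the composition of $\mu|_{\widehat{\bold{M}}_m}$ with the natural projection $\mathfrak{g}^*\otimes \bbR^3\to \mathfrak{g}_m^*\otimes \bbR^3$ satisfies the hyperK\"ahler moment map axioms for the induced $G_m$-action on $\widehat{\bold{M}}_m$. Of these, $(i)$ is the only point requiring a brief justification, as it is only asserted in the excerpt. Since $G_m$ fixes $m$, the differential of its action agrees, under the canonical identification $T_m\bold{M}\cong \bold{M}$, with the restriction to $G_m$ of the linear $G$-action. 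The infinitesimal action map $\mathfrak{g}\to T_m\bold{M}$, $\xi\mapsto \xi\cdot m$, is $G_m$-equivariant for the adjoint action, so its image $T_m\cO_m$ is $G_m$-invariant; the tri-holomorphicity of the $G$-action then implies that each $I_jT_m\cO_m$ is $G_m$-invariant as well, and hence so is $\bbH T_m\cO_m$. Because $G_m$ acts by isometries, its invariant orthogonal complement $\widehat{\bold{M}}_m$ is preserved. Part $(ii)$ then follows immediately, since the three K\"ahler forms and the Riemannian metric of $\bold{M}$ restrict to the corresponding data on $\widehat{\bold{M}}_m$.

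For $(iii)$, define
$$
\hat{\mu}_m:=(\pi\otimes \Id_{\bbR^3})\circ \mu|_{\widehat{\bold{M}}_m}\colon \widehat{\bold{M}}_m\longrightarrow \mathfrak{g}_m^*\otimes \bbR^3,
$$
where $\pi:\mathfrak{g}^*\to \mathfrak{g}_m^*$ is dual to the inclusion $\mathfrak{g}_m\hookrightarrow \mathfrak{g}$; equivalently, using the invariant inner product implicit in the formulas of Section~\ref{qv.0} to identify $\mathfrak{g}$ with $\mathfrak{g}^*$, $\hat{\mu}_m$ is the orthogonal projection of $\mu|_{\widehat{\bold{M}}_m}$ onto $\mathfrak{g}_m\otimes\bbR^3$. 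Equivariance of $\hat{\mu}_m$ under $G_m$ is inherited from the $G$-equivariance of $\mu$ and the $G_m$-equivariance of $\pi$. For the moment map identity, pairing with $\xi\in \mathfrak{g}_m$ and applying the moment map property of $\mu$ on $\bold{M}$ at a point $p\in \widehat{\bold{M}}_m$ along $v\in T_p\widehat{\bold{M}}_m$ yields
$$
d\langle\hat{\mu}_m^j,\xi\rangle_p(v)=d\langle \mu^j,\xi\rangle_p(v)=\omega_j(X_\xi(p),v),\qquad j=1,2,3,
$$
where $X_\xi$ is the fundamental vector field on $\bold{M}$ generated by $\xi$ and $\omega_1,\omega_2,\omega_3$ are the three symplectic forms of $\bold{M}$. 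By $(i)$, $X_\xi(p)\in \widehat{\bold{M}}_m=T_p\widehat{\bold{M}}_m$, so the right-hand side is precisely the value of $\omega_j|_{\widehat{\bold{M}}_m}$ on the pair $(X_\xi(p),v)$, which is the required moment map identity for $\hat{\mu}_m$ on $\widehat{\bold{M}}_m$.

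There is no serious obstacle here; the essential observation is that $G_m$ simultaneously preserves the metric, the three complex structures, and the orbit $\cO_m$, and this is exactly what is needed to see that $\widehat{\bold{M}}_m$ is a $G_m$-invariant $\bbH$-submodule. The only mild subtlety is the identification $T_m\bold{M}\cong \bold{M}$ that converts the infinitesimally defined $\widehat{\bold{M}}_m$ into an honest linear subspace of $\bold{M}$ on which the linear $G_m$-action restricts.
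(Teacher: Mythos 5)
Your proposal is internally correct in what it proves, but it takes a different route from the paper and ends with a strictly weaker conclusion. You read ``induces'' as: compose $\mu|_{\widehat{\bold{M}}_m}$ with the projection $\mathfrak{g}^*\to\mathfrak{g}_m^*$ and verify the moment-map axioms for the composite. That verification is fine, and its key input---the $G_m$-invariance of $\widehat{\bold{M}}_m$, hence the tangency of the fundamental vector fields of $\mathfrak{g}_m$ to $\widehat{\bold{M}}_m$---is a point the paper leaves implicit and that you justify cleanly via the $G_m$-invariance of $T_m\cO_m$, the tri-holomorphicity and the isometric character of the action.

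The paper, however, proves more, and the stronger form is the one it uses later. Via \eqref{cqac.3} it obtains the vanishing \eqref{zero derivative} of $\langle d\mu,\xi\rangle$ along $\widehat{\bold{M}}_m$ for every $\xi$ dual to the orthogonal complement of $\mathfrak{g}_m$ (equivalently, $I_a\xi_*$ is orthogonal to $\widehat{\bold{M}}_m$ along $\widehat{\bold{M}}_m$), and then, since $m\in\mu^{-1}(0)$ lies in the connected subspace $\widehat{\bold{M}}_m$, integration gives $\langle\mu,\xi\rangle\equiv 0$ on $\widehat{\bold{M}}_m$ for all such $\xi$. Hence the restriction of $\mu$ already takes values in $\bbR^3\otimes\mathfrak{g}_m^*$: no projection is needed and no component of $\mu$ is discarded. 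Your argument never uses the hypothesis $m\in\mu^{-1}(0)$, which is the telltale sign that this part of the content is missing: in your version the $(\mathfrak{g}_m^{\perp})^*$-components of $\mu$ on $\widehat{\bold{M}}_m$ are simply projected away, whereas the paper shows they vanish identically. The stronger statement is what meshes with the sequel, e.g.\ the decomposition $\mu=\hat{\mu}_m+\check{\mu}_m$ in the proof of Theorem~\ref{cqac.14}, where the boundary fibers are cut out by the single equation $\hat{\mu}_m=-\zeta_m$, and the identification of $\hat{\mu}_m^{-1}(-\zeta_m)/G_m$ with a (reduced) quiver variety. To match the lemma as used, supplement your proof with the paper's step: for $\xi$ orthogonal to $\mathfrak{g}_m$ and $p,v\in\widehat{\bold{M}}_m$, \eqref{cqac.3} gives $\langle d\mu_a(v),\xi\rangle=g(I_a\xi_*(p),v)$, which vanishes since $I_a\xi_*(p)$ lies in $\bbH T_m\cO_m=\widehat{\bold{M}}_m^{\perp}$; integrating from $m$, where $\mu=0$, yields $\langle\mu,\xi\rangle\equiv 0$ on $\widehat{\bold{M}}_m$, after which your computation identifies the genuine restriction with the $G_m$-moment map.
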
  
\begin{proof}
Indeed, since 
\begin{equation}
\langle d\mu_i(v), \xi\rangle = \omega_i(\xi_*,v)=g(I_i\xi_*,v) \quad \forall \xi\in\mathfrak{g},
\label{cqac.3}
\end{equation}
where $\xi_*\in\CI(\bold{M};T\bold{M})$ is the vector field generated by $\xi$, we see that
\begin{equation}\label{zero derivative}
\langle d\mu, \xi\rangle|_{\widehat{\bold{M}}_m}=0
\end{equation}
for $\xi\in (\mathfrak{g}_m^{\perp})^*$, where $\mathfrak{g}^{\perp}_m$ is the orthogonal complement of $\mathfrak{g}_m$ in $\mathfrak{g}$.  Since $m\in \mu^{-1}(0)$, notice that $m\in \widehat{\bold M}_m$ where $\widehat{\bold M}_m$ is seen as a subspace of $\bold M$. This observation, together with the vanishing of the the derivative \eqref{zero derivative} implies that
$$
\langle \mu, \xi\rangle|_{\widehat{\bold{M}}_m}=0 \quad \forall \xi\in (\mathfrak{g}_m^{\perp})^*.  
$$
Hence, the restriction of $\mu$ to $\widehat{\bold{M}}_m$ takes values in $\mathfrak{g}_m^{*}$ and therefore corresponds to the hyperK\"ahler moment map of the action of $G_m$ on $\widehat{\bold{M}}_m$.
\end{proof}
Now, recall from \cite[Lemma~6.5]{Nakajima1994}  that since $m\in \mu^{-1}(0)\backslash\{0\} \subset \bold{M}\backslash\{0\}$ has non-trivial stabilizer $G_m$, there is an induced orthogonal decomposition

\begin{equation}
  V= V^{(0)}\oplus (V^{(1)})^{\oplus \hat{v}_1}\oplus \cdots \oplus (V^{(r)})^{\oplus \hat{v}_r}
\label{cqac.5}\end{equation}

of $V= \oplus_{k=1}^n V_k$.  On $\widehat{\bold{M}}_{m}$, this induces a decomposition
\begin{equation}
\widehat{\bold{M}}_m= (\widehat{\bold{M}}_m\cap \bold{M}(v^{(0)},w))\oplus \lrp{ \bigoplus_{i,j=1}^r \widehat{M}_{ij}\otimes \Hom(\bbC^{\hat{v}_i},\bbC^{\hat{v}_j}) } \oplus
\lrp{ \bigoplus_{i=1}^r \Hom(\bbC^{\hat{v}_i},\widehat{W}_i) }\oplus \lrp{ \bigoplus_{i=1}^r \Hom(\widehat{W}_i,\bbC^{\hat{v}_i})}, \label{cqac.6}\end{equation}

 where
 
 $$
    \widehat{M}_{ij}:=\widehat{\bold{M}}\cap \lrp{ \bigoplus_{h\in H} \Hom(V^{(i)}_{\out(h)}, V^{(i)}_{\inc(h)} )   }
 $$
and
$$
   \widehat{W}_i:=\widehat{\bold{M}}\cap \left\{   \lrp{ \bigoplus_{h\in H}\Hom(V^{(0)}_{\out(h)},V^{(i)}_{\inc(h)})} \oplus  \lrp{\bigoplus_{k=1}^n \Hom(W_k,V^{(i)}_k)} \right\}.
$$
In this decomposition, the stabilizer $G_m$ is given by 

\begin{equation}
     G_m= \prod_{i=1}^r \gU(\hat{v}_i)
\label{cqac.7}\end{equation}
and acts trivially on the component

\begin{equation}
\bold{T}_m:=  (\widehat{\bold{M}}_m\cap \bold{M}(v^{(0)},w))\oplus \lrp{ \bigoplus_{i=1}^r \widehat{M}_{ii}\otimes \Id_{\bbC^{\hat{v}_i}} }.
\label{cqac.8}\end{equation}

Notice that $( \oplus_{i=1}^3 I_iT_m\cO_m)\oplus\bold{T}_m$ can be identified with the tangent space at $\bold{q}(m)$ of the stratum of $M/G$ containing $\bold{q}(m)$. Let $\bold{T}_m^{\perp}$ be the orthogonal complement of $\bold{T}_m$ in $\widehat{\bold{M}}_m$.  Since the action of $G_m$ is trivial on $\bold{T}_m$, the moment map $\hat{\mu}_m$ is trivial on $\bold{T}_m$, that is, it factors through the projection 
\begin{equation}
        \widehat{\bold{M}}_m= \bold{T}_m\oplus \bold{T}_m^{\perp}\to \bold{T}_m^{\perp}
\label{cqac.8b}\end{equation}
and can be seen as a moment map on $\bold{T}_m^{\perp}$,
\begin{equation}
       \hat{\mu}_m: \bold{T}^{\perp}_m\to \bbR^3\otimes \mathfrak{g}_m^*.
\label{mm.1}\end{equation}
In fact, $\bold{T}^{\perp}_m$ is naturally a reduced Nakajima quiver representation space, namely
\begin{equation}
   \bold{T}_m^{\perp}= \bold{M}(\hat{v},\hat{w})_{\reduced}
\label{cqac.9}\end{equation}
with $\hat{v}={}^t(\hat{v}_1,\ldots,\hat{v}_r)$, $\hat{w}={}^t(\dim_{\bbC}\widehat{W}_1,\ldots,\dim_{\bbC}\widehat{W}_r)$ and with adjacency matrix $\hat{A}_m$ of the graph having $(i,j)$-component $\dim_{\bbC}\widehat{M}_{ij}$.  From the description above, $\bold{T}^{\perp}_m$ is the orthogonal complement of the tangent space at $m$ of the stratum of $\bold{M}$ containing $m$.  Clearly, the natural $\bbR^+$-action on $\bold{M}$ induces a canonical identification 
$$
       \bold{T}^{\perp}_{\lambda m}=\bold{T}^{\perp}_{ m} \quad \forall \lambda\in \bbR^+, \; \forall m\in \bold{M}\setminus\{0\}.
$$

For $\zeta\in \bbR^3\otimes Z \subset \bbR^3\otimes \mathfrak{g}^*$, denote by $\zeta_m$ its image under the canonical map
\begin{equation*}
\pr_m: \bbR^3\otimes \mathfrak{g}^*\to \bbR^3\otimes \mathfrak{g}^*_m.
\end{equation*}
Obviously, $\zeta_m$ is in the center of $\mathfrak{g}_m^*$, so $G_m$ acts on the preimage $\hat{\mu}_m^{-1}(-\zeta_m)$ of the moment map \eqref{mm.1}.  By the discussion above, the quotient $\hat\mu^{-1}(-\zeta_m)/G_m$ is a (reduced) Nakajima quiver variety.

In order for the arguments of section \ref{rcqv.0} to work, we need to assume that $\hat\mu^{-1}(-\zeta_m)/G_m$ is smooth for any choice of $m\in \mu^{-1}(0)\backslash\{0\} \subset \bold{M}\backslash\{0\}$ with non-trivial stabilizer $G_m$. From Theorem \ref{generic smoothness} this condition is equivalent to requiring that $\zeta\notin Z_s(\mathfrak g)$ and that $\forall m \in \mu^{-1}(0)\backslash\{0\}$ such that $G_m\ne \{\Id\}$, $\zeta_m\notin Z_s(\mathfrak g_m)$. 
\begin{definition}
We call $\zeta$ \textbf{properly generic} if it satisfies the above condition.
\label{pg.2}\end{definition}
\begin{lemma}
The subset of $\bbR^3\otimes Z$ where $\zeta$ fails to be properly generic is of real co-dimension three.
\label{propgen.1}\end{lemma}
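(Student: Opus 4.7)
The plan is to reduce the problem to a combinatorial statement about finitely many linear hyperplanes. By Definition~\ref{pg.2}, $\zeta\in\bbR^3\otimes Z$ fails to be properly generic precisely when either $\zeta\in Z_s(\mathfrak{g})$, or there exists $m\in\mu^{-1}(0)\setminus\{0\}$ with non-trivial stabilizer $G_m$ such that $\zeta_m\in Z_s(\mathfrak{g}_m)$. Since $Z_s(\mathfrak{g})= \bigcup_{\theta\in R_+(\bold{v})} \bbR^3\otimes D_\theta$ is by \eqref{gen.1} already a finite union of subspaces of real codimension three, the task reduces to analyzing the second condition.

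First, I would observe that the stabilizers arising from points $m\in\bold{M}\setminus\{0\}$ form only finitely many conjugacy classes in $G$: such a class is determined by the discrete isotypic decomposition data $(\hat{v},\hat{w},\hat{A})$ appearing in \eqref{cqac.5}--\eqref{cqac.7}, and there are only finitely many ways to partition the finite-dimensional hermitian data $(V,W)$ into such components. Moreover, the same data determines the embedding $\mathfrak{g}_m\hookrightarrow\mathfrak{g}$ up to conjugation, hence also the $\bbR$-linear projection $\pr_m|_Z:Z\to Z_m$.

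Next I would compute $\pr_m|_Z$ explicitly. Identifying $Z\cong\bbR^n$ via scalar matrices $(i\lambda_k\Id_{V_k})_k$ and $Z_m\cong\bbR^r$ via $(i\mu_j\Id_{\bbC^{\hat v_j}})_j$, pairing through the trace form shows that $\zeta=(\lambda_k)_k$ is sent to the element of $Z_m$ with $j$-th entry $\sum_k \lambda_k v_k^{(j)}$, where $v_k^{(j)}:=\dim_\bbC V_k^{(j)}$. Consequently, for $\theta\in R_+(\hat v)$, the condition $\zeta_m\in \bbR^3\otimes D_\theta\subset\bbR^3\otimes Z_m$ translates into
\begin{equation*}
\sum_k \lambda_k \tilde\theta_k=0,\qquad \tilde\theta_k:=\sum_{j=1}^r \theta_j v_k^{(j)}.
\end{equation*}
Since $\theta\in R_+(\hat v)\subset R_+$ is nonzero, some $\theta_{j_0}>0$; and since $V^{(j_0)}$ is a genuine summand of $V$, some $v_{k_0}^{(j_0)}>0$, forcing $\tilde\theta_{k_0}>0$. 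Therefore $\tilde\theta\neq 0$ and $D_{\tilde\theta}\subset Z$ is a genuine (real) hyperplane, so its preimage $\bbR^3\otimes D_{\tilde\theta}$ is of real codimension three in $\bbR^3\otimes Z$.

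Combining these observations, the locus where $\zeta$ fails to be properly generic is the union of $Z_s(\mathfrak{g})$ with the finitely many sets $\bbR^3\otimes D_{\tilde\theta}$ arising from the finitely many pairs (conjugacy class of $G_m$, element $\theta\in R_+(\hat v)$). Each piece has real codimension three in $\bbR^3\otimes Z$, so their finite union has real codimension three as well. The only subtlety is the verification that $\tilde\theta\neq 0$ at the end of the previous paragraph, but this follows directly from the definition of the decomposition \eqref{cqac.5}; I expect no serious obstacle beyond carefully bookkeeping the identification $\pr_m|_Z$.
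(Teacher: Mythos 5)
Your argument is correct and follows essentially the same route as the paper: reduce to the finitely many conjugacy classes of stabilizer subgroups (with the relevant data depending only on the conjugacy class), and show that for each class and each $\theta\in R_+(\hat v)$ the corresponding wall of $Z_s(\mathfrak{g}_m)$ pulls back under $\pr_m$ to a subspace of real codimension three in $\bbR^3\otimes Z$. Where the paper verifies the key non-degeneracy by invoking Nakajima's Lemma~6.5(6) (the scalar direction of $\bbR^3\otimes Z$ surjects onto $\bbR^3\otimes\mathfrak{u}_m(1)$, which meets $Z_s(\mathfrak{g}_m)$ only at the origin), you instead compute $\pr_m|_Z$ explicitly via the trace pairing and check $\tilde\theta\ne 0$ — an equivalent verification, valid up to harmless positive normalization constants in the identification $\mathfrak{z}_m\cong\bbR^r$.
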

\begin{proof}
We first need to properly define the sets $Z_s(\mathfrak g_m)$ as we did for the center of the full Lie algebra $\mathfrak g$ in \eqref{gen.1}. Given $m$ as above, the Cartan matrix is given by $C_m = 2\Id -\hat A_m$ with $\hat A_m$ the adjacency matrix of the reduced quiver variety \eqref{cqac.9}, while $R_+(\bold v)$ is replaced by $R_+(\hat v)$. By \eqref{cqac.7} and \cite[Lemma~6.5 (6)]{Nakajima1994} , the center of $G_m$ is $Z_m=\prod_{i=1}^{r}U_m^i(1)$ with 
$$
U_m^i(1):= G_m\cap \prod\limits_k U(V_k^{(i)})\cong U(1).
$$ 
Its Lie algebra is therefore given by $\displaystyle \mathfrak{z}_m=\bigoplus_{i=1}^r \mathfrak{u}_m^i(1)$ with $\mathfrak{u}_m^i(1)$ the Lie algebra of $U_m^i(1)$.  Consequently,  $\displaystyle Z_s(\mathfrak g_m):= \bigcup_{\theta\in R_+(\hat{v})}\bbR^3\otimes D^m_{\theta}$ with
\begin{equation*}
D^m_{\theta}:=  \left\{ x = (x_i) \in \bigoplus_{i=1}^r \mathfrak u^i_m(1) \Big| \sum\limits_k x_k\theta_k =0 \right\}\textrm{ for } \theta\in R_+(\hat v).
\end{equation*}
As is clear from \eqref{cqac.7} and the construction of the decomposition \eqref{cqac.5}, the subset $Z_s(\mathfrak g_m)$ only depends on $m$ through its stabilizer $G_m$, namely $Z_s(\mathfrak g_m)=Z_s(\mathfrak g_{m'})$ whenever $\mathfrak{g}_m=\mathfrak{g}_{m'}$.  

However, in a given conjugacy class of stabilizer subgroups, the subset $Z_s(\mathfrak g_m)$ may vary, in particular along the orbit $\cO_m$ of $m$.  Nevertheless, along such an orbit, the conditions $\zeta_{m'}\in Z_s(\mathfrak g_{m'})$ for $m'\in \cO_m$ only correspond to one condition, since by construction,
\begin{equation}
\zeta_m\in Z_s(\mathfrak g_m) \Leftrightarrow \zeta_{gm}\in Z_s(\mathfrak g_{gm}) \quad \forall g\in G.
\label{cong.1}\end{equation}
  Indeed, since moving from $m$ to $gm$ changes the whole decomposition \eqref{cqac.5} by its composition by $g$, the Cartan matrix and $R_+(\hat v)$ are the same for $m$ and $gm$. Since $G_{gm} = gG_mg^{-1}$, it follows that $\mathfrak u_{gm}^i(1) = g\mathfrak u_m^i(1)g^{-1}$ for all $i$ and therefore that  $D^{gm}_{\theta} = gD^m_{\theta}g^{-1}$ and $Z_s(\mathfrak g_{gm}) = gZ_s(\mathfrak g_m)g^{-1}$. Hence, \eqref{cong.1} follows from the fact that $\mathfrak g_{gm} = g\mathfrak g_m g^{-1}$, so in particular $\zeta_{gm} = g\zeta_m g^{-1}$. As a result, the condition that $\zeta_m\in Z_s(\mathfrak g_m)$ only depends on the conjugacy class of $G_m$. 
  
  Since there are only finitely many conjugacy classes of stabilizer subgroups in $G$,  the result follows provided we can show that $\pr_m^{-1}(\bbR^3\otimes Z_s(\mathfrak g_m))$ is of real co-dimension three inside $\bbR^3\otimes Z$.  To see this, let  $\mathfrak{u}(1)$ and $\mathfrak{u}_m(1)$ denote the Lie algebras of scalars for $G$ and $G_m$.  By \cite[Lemma~6.5 (6)]{Nakajima1994}, the restriction of $\pr_m$ to $\bbR^3\otimes \mathfrak{u}(1)\subset \bbR^3\otimes Z$ induces a surjective map
  $$
      \pr_m: \bbR^3\otimes\mathfrak{u}(1)\to \bbR^3\otimes\mathfrak{u}_m(1).
  $$
  On the other hand, by \eqref{gen.1}, an element $\zeta_m\in \bbR^3\otimes\mathfrak{u}_m(1)$ is in $Z_s(\mathfrak g_m)$ if and only if $\zeta_m=0$.  This shows that $\bbR^3\otimes Z_s(\mathfrak g_m)\cap \pr_m(\bbR^3\otimes Z)$ is of real codimension three in  $\pr_m(\bbR^3\otimes Z)$, hence that $\pr_m^{-1}(\bbR^3\otimes Z_s(\mathfrak g_m))$ is of real co-dimension three inside $\bbR^3\otimes Z$ has desired.

\end{proof}

In the decomposition 
\begin{equation}
  T_m\bold{M}= (\bbH T_m\cO_m)\oplus \widehat{\bold{M}}_m,
\label{cor.1}\end{equation}
the factor $\bbH T_m\cO_m$ will ultimately play no significant role after passing to the hyperK\"ahler quotient.  However, the action of $G_m$ on $\bbH T_m\cO_m$ is not trivial in general and we need to carefully describe it.  First, since the action of $G_m$ preserves the $\bbH$-module structure of $\bbH T_m\cO_m$, it suffices to describe its action on $T_m\cO_m$.  Using the identification
$$
   T_m\cO_m= \mathfrak{g}^{\perp}_m,
$$
this action is given by the adjoint action of $G_m$ on $\mathfrak{g}^{\perp}_m$.  This action is originally defined on $\mathfrak{g}$, but since it defines an orthogonal representation of $G_m$ and preserves $\mathfrak{g}_m$ in the orthogonal decomposition 
$$
     \mathfrak{g}= \mathfrak{g}_m\oplus \mathfrak{g}_m^{\perp},
$$
it induces an action on $\mathfrak{g}^{\perp}_m$ as well.  Let $\mathfrak{g}^{\perp}_{m,0}$ be te subspace of $\mathfrak{g}^{\perp}_m$ consisting of elements fixed by the adjoint action of $G_m$ and let $\mathfrak{g}_{m,1}^\perp$ be its orthogonal complement in $\mathfrak{g}^{\perp}_m$.  This induces the decomposition
$$
  \bbH T_m\cO_m= \bbH \mathfrak{g}^{\perp}_{m,0} \oplus \bbH \mathfrak{g}^{\perp}_{m,1}
$$
with $G_m$ acting trivially on the first factor.  Inserting this in \eqref{cor.1}, this induces via \eqref{cqac.8b} the decomposition
\begin{equation}
   T_m \bold{M}= \bbH \mathfrak{g}^{\perp}_{m,0}\oplus \bold{T}_m \oplus \bbH \mathfrak{g}^{\perp}_{m,1} \oplus \bold{T}_m^{\perp}.
\label{cor.2}\end{equation}
Upon making the identification $T_m \bold{M}= \bold{M}$, the subspace $\bbH \mathfrak{g}^{\perp}_{m,0}\oplus \bold{T}_m$ corresponds to the subspace $\bold{M}^{G_m}$ of $\bold{M}$ consisting of the elements fixed by the action of $G_m$, so that \eqref{cor.2} can be rewritten
\begin{equation}
  T_m\bold{M}= \bold{M}^{G_m}\oplus (\bbH \mathfrak{g}^{\perp}_{m,1}\oplus \bold{T}^{\perp}_m).
\label{cor.3}\end{equation}
By definition of $G_m$, notice that $m\in \bold{M}^{G_m}\subset \bold{M}$.  In fact, $\bold{M}^{G_m}$ contains all the elements with stabilizer $G_m$, but some of its elements may have strictly larger stabilizers.

We can now introduce the natural compactification of $\bold{M}$ to describe the asymptotic geometry at infinity of the associated quiver variety.  Let us first denote by $\overline{\bold{M}}$ the radial compactification of $\bold{M}$.    Let $\mathfrak{s}_1,\ldots,\mathfrak{s}_{\ell}$ be the strata of $\bbS(\bold{M})=\pa\overline{\bold{M}}$ as a $G$-manifold listed in an order compatible with the partial order, namely
$$
    \mathfrak{s}_i<\mathfrak{s}_j \; \Longrightarrow \; i<j.
$$ 
In particular, $\mathfrak{s}_{\ell}$ corresponds to the regular stratum of $\pa\overline{\bold{M}}$.  
\begin{definition}
The \textbf{$\QAC$ compactification} of $\bold{M}$ seen as an orthogonal representation of $G$ is the manifold with corners 
$$
      \widetilde{\bold{M}}:= [\overline{\bold{M}}; \overline{\mathfrak{s}}_1,\ldots,\overline{\mathfrak{s}}_{\ell-1}].
$$
\label{cqac.10}\end{definition}
In this definition, the order in which we blow up is important.  First, since $\mathfrak{s}_1$ is minimal with respect to the partial order, $\mathfrak{s}_1=\overline{\mathfrak{s}}_1$ is a closed submanifold of $\pa\overline{M}$, so its blow-up is well-defined, as well as the blow-ups of all minimal strata.  More generally, before the blow-up of $\overline{\mathfrak{s}}_i$ is performed, notice that $\overline{\mathfrak{s}}_{j}$ has been blown up whenever $\mathfrak{s}_j<\mathfrak{s}_i$, so by \cite[Proposition~7.4 and Theorem~7.5]{AM2011}, the lift of $\overline{\mathfrak{s}}_i$ is a $p$-submanifold and its blow-up is well-defined.  The manifold with corners $\widetilde{\bold{M}}$ has $\ell$ boundary hypersurfaces $H_1,\ldots, H_{\ell}$ corresponding to the strata $\mathfrak{s}_1,\ldots,\mathfrak{s}_{\ell}$ of $\pa\overline{\bold{M}}$.  By \cite[Theorem~7.5]{AM2011}, the maximal hypersurface $H_{\ell}$ has an iterated fibration structure, namely it is a manifold with fibered corners. Clearly, this iterated fibration naturally extends to induce on $\widetilde{M}$ an iterated fibration structure with fiber bundle 
\begin{equation}
        \phi_{H_i}: H_i\to S_{H_i}
\label{cqac.11}\end{equation}
induced by the blow-down map
$$
       \widetilde{\bold{M}}\to [\overline{\bold{M}};\overline{\mathfrak{s}}_1,\ldots,\overline{\mathfrak{s}}_{i-1}],
$$
where $S_{H_i}$ is the manifold with fibered corners resolving the smoothly stratified space $\overline{\mathfrak{s}}_i$.  To describe the fibers of \eqref{cqac.11}, consider first the case $i=1$ and let $\overline{H}_1$ be the boundary hypersurface in $[\overline{\bold{M}};\overline{\mathfrak{s}_1}]$ created by the blow-up of $\overline{\mathfrak{s}}_1$.  Then the blow-down map $[\overline{\bold{M}};\overline{\mathfrak{s}}_1]\to \overline{\bold{M}}$ induces a fiber bundle 
$$
     \overline{\phi}_{H_1}: \overline{H_1}\to S_{H_1}=\overline{\mathfrak{s}}_1.
$$
By the discussion above, the fibers above $m\in S_{H_1}$ correspond to the radial compactification $\overline{\bbH\mathfrak{g}^{\perp}_{m,1}\oplus \bold{T}^{\perp}_m}$ of the space orthogonal to the tangent space of the stratum of $\pa\overline{\bold{M}}$ containing $m$.  Now, when we perform the blow-ups of the other strata of $\pa\overline{\bold{M}}$, this corresponds on $\overline{\bbH\mathfrak{g}^{\perp}_{m,1}\oplus\bold{T}^{\perp}_m}$ to the blow-ups of the intersection of the  lifts of the strata $\overline{\mathfrak{s}}_2,\ldots \overline{\mathfrak{s}}_{\ell-1}$ with $\pa\overline{\bbH\mathfrak{g}^{\perp}_{m,1}\oplus\bold{T}^{\perp}_m}$, so that on $\widetilde{\bold{M}}$, $\overline{\bbH\mathfrak{g}^{\perp}_{m,1}\oplus\bold{T}^{\perp}_m}$ lifts to the $\QAC$ compactification $\widetilde{\bbH\mathfrak{g}^{\perp}_{m,1}\oplus\bold{T}^{\perp}_m}$ of  $\bbH\mathfrak{g}^{\perp}_{m,1}\oplus\bold{T}^{\perp}_m$ as an orthogonal representation of $G_m$.  This $\QAC$ compactification contains the $\QAC$ compactification $\widetilde{\bold{T}^{\perp}_m}$ of $\bold{T}^{\perp}_m$, namely as the closure of $\{0\}\times \bold{T}^{\perp}_m$ in $\widetilde{\bbH\mathfrak{g}^{\perp}_{m,1}\oplus\bold{T}^{\perp}_m}$.   For the other boundary hypersurfaces of $\widetilde{\bold{M}}$, the same phenomenon occurs if $m\in S_{H_i}\setminus \pa S_{H_i}$, namely the fiber above $m$ corresponds to the $\QAC$ compactification $\widetilde{\bbH\mathfrak{g}^{\perp}_{m,1}\oplus \bold{T}^{\perp}_m}$ of $\bbH\mathfrak{g}^{\perp}_{m,1}\oplus \bold{T}^\perp_m$.  When $H_i$ is not minimal and $m\in \pa S_{H_i}$, this is also what happens since $\bbH\mathfrak{g}^{\perp}_{m,1}\oplus \bold{T}^{\perp}_m$ is now the orthogonal of the complement of $T_mS_{H_i}$ in the tangent space at $m$ in 
$$
         [\pa \overline{\bold{M}}; \overline{\mathfrak{s}}_1,\ldots,\overline{\mathfrak{s}}_{i-1}]
$$
with $G_m$ the stabilizer of $m$ in $[\pa \overline{\bold{M}}; \overline{\mathfrak{s}}_1,\ldots,\overline{\mathfrak{s}}_{i-1}]$.  

For $\zeta \in \bbR^3\otimes Z$, let $\overline{\mu^{-1}(-\zeta)}$ and $\widetilde{\mu^{-1}(-\zeta)}$ be the closure of $\mu^{-1}(\zeta)$ inside $\overline{\bold{M}}$ and $\widetilde{\bold{M}}$ respectively. Since $\mu$ is homogeneous of degree $2$ with respect to the natural $\bbR^+$-action on $\bold{M}$, notice that $\mu^{-1}(0)$ is a cone in $\bold{M}$ with possibly a singular cross-section, while for $\zeta\in \bbR^3\otimes Z$ fixed, $\mu^{-1}(-\zeta)$ is asymptotic to $\mu^{-1}(0)$ at infinity in the sense that 
\begin{equation}
       \overline{\mu^{-1}(-\zeta)}\cap\pa \overline{\bold{M}}= \overline{\mu^{-1}(0)}\cap\pa \overline{\bold{M}}.
\label{cqac.12}\end{equation}
Hence, unless the cone $\mu^{-1}(0)$ has a smooth cross-section, even if $\mu^{-1}(-\zeta)$ is smooth, for instance when $\zeta$ is generic, $\overline{\mu^{-1}(-\zeta)}$ will not be smooth and will have singularities on its boundary $\pa\overline{\mu^{-1}(-\zeta)}=\overline{\mu^{-1}(0)}\cap\pa \overline{\bold{M}}$.  Now, by the proof of Lemma~\ref{cqac.1}, the cone $\mu^{-1}(0)$ is naturally stratified with strata induced by those of the $G$-space $\bold{M}$.  Similarly, $\overline{\mu^{-1}(0)}$ and $\pa\overline{\mu^{-1}(0)}=\overline{\mu^{-1}(0)}\cap\pa \overline{\bold{M}}$ are stratified by the strata induced by those of $\overline{\bold{M}}$ and $\pa \overline{\bold{M}}$.  In particular, for each stratum $\mathfrak{s}_i\subset \pa\overline{\bold{M}}$, there is a corresponding stratum $\mathfrak{s}_i\cap \pa\overline{\mu^{-1}(0)}$ of $\pa\overline{\mu^{-1}(0)}$.  On $\widetilde{\bold{M}}$, we can correspondingly associate to each $H_i\in \cM_1(\widetilde{\bold{M}})$ a `boundary hypersurface' $H_i\cap \widetilde{\mu^{-1}(0)}$ of $\widetilde{\mu^{-1}(0)}$ with 

\begin{equation}
    \Sigma_{H_i}:= \phi_{H_i}(H_i\cap \widetilde{\mu^{-1}(0)})\subset S_{H_i}
\label{cqac.13}\end{equation}
a subset of $S_{H_i}$ mapping onto  the closed stratum $\overline{\mathfrak{s}}_i\cap\pa\overline{\mu^{-1}(0)}$ under the blow-down map $S_{H_i}\to \overline{\mathfrak{s}}_i$.  
\begin{theorem}
For $\zeta\in \bbR^3\otimes Z$ properly generic, $\widetilde{\mu^{-1}(-\zeta)}$ is a $p$-submanifold of $\widetilde{\bold{M}}$ such that the iterated fibration structure of $\widetilde{\bold{M}}$ induces one on $\widetilde{\mu^{-1}(-\zeta)}$, namely for each $H_i\in\cM_1(\widetilde{\bold{M}})$, $\widetilde{\mu^{-1}(-\zeta)}$ has a boundary hypersurface $H_i\cap \widetilde{\mu^{-1}(-\zeta)}$ with fiber bundle
\begin{equation}
     \phi_{H_i}: H_i\cap \widetilde{\mu^{-1}(-\zeta)}\to \Sigma_{H_i}
\label{cqac.14a}\end{equation}
induced by restriction of the fiber bundle $\phi_{H_i}:H_i\to S_{H_i}$.  
Moreover, the free $G$-action on $\mu^{-1}(-\zeta)$ extends to a free $G$-action on $\widetilde{\mu^{-1}(-\zeta)}$ in such a way that for each $H_i\in\cM_1(\widetilde{\bold{M}})$, there is an induced action on $\Sigma_{H_i}$ making \eqref{cqac.14a} $G$-equivariant.
\label{cqac.14}\end{theorem}
\begin{proof}
Proceeding by induction on the depth of the Nakajima quiver representation space $\bold{M}$ as a $G$-space, we can assume that the result holds for Nakajima quiver representation spaces of lower depth.  Let us first consider the case where $H_i$ is minimal.  Given $m\in \Sigma_{H_i}\subset S_{H_i}$, the corresponding fiber $\phi_{H_i}^{-1}(m)$  in $H_i$ is the $\QAC$ compactification $\widetilde{\bbH\mathfrak{g}_{m,1}^{\perp}\oplus\bold{T}^{\perp}_m}$.  In terms of the decomposition 
\begin{equation}
\bbR^3\otimes\mathfrak{g}^*= (\bbR^3\otimes\mathfrak{g}_m^*)\oplus(\bbR^3\otimes(\mathfrak{g}_m^{\perp})^*),
\label{cqac.15}\end{equation}  
the moment $\mu$ has a decomposition
\begin{equation}
\mu= \hat{\mu}_m+ \check{\mu}_m
\label{cqac.16}\end{equation}
with 
\begin{equation}
  \hat{\mu}_m:\bold{M}\to \bbR^3\otimes\mathfrak{g}_m^* \quad \mbox{and} \quad  \check{\mu}_m: \bold{M}\to \bbR^3\otimes(\mathfrak{g}^{\perp}_m)^*.
\label{cqac.17}\end{equation}
The equation $\mu(q)=-\zeta$ then becomes
\begin{equation}
      \hat{\mu}_m(q)=-\zeta_m \quad \mbox{and} \quad \check{\mu}_m(q)=-\zeta_{m}^{\perp},
\label{cqac.17f}\end{equation}
where $\zeta=(\zeta_m,\zeta_m^{\perp})$ in the decomposition \eqref{cqac.15}.  

We will use this decomposition to see that $\widetilde{\mu^{-1}(-\zeta)}$ is a $p$-submanifold near $H_i$, but in order to do that, we need to introduce suitable coordinates.  Let $\bbS(\bold{M})$ be the sphere of radius one centered at the origin in $\bold{M}$.  Without loss of generality we can think of $m$ as a point on that sphere.  Since the action of $G$ is unitary, its action restricts to $\bbS(\bold{M})$.  Applying the tube theorem \cite[Theorem~2.4.1]{DK2000} near $m$ on $\bbS(\bold{M})$, we can find a local neighborhood $\cU$ of $m$ in $\bbS(\bold{M})$ with an equivariant diffeomorphism
$$
   \cU\cong G\times_{G_m} V
$$
where $G_m$ the stabilizer of $m$ and $V$ is the orthogonal representation of $G_m$ corresponding to the orthogonal complement of $T_m\cO_m$ in $T_m\bold{M}$, namely
\begin{equation}
\begin{aligned}
  V &= (\oplus_{i=1}^3 I_iT_m\cO_m)\oplus \widehat{\bold{M}}_m = (\oplus_{i=1}^3 I_i \mathfrak{g}^{\perp}_{m,0})\oplus (\oplus_{i=1}^3 I_i \mathfrak{g}^{\perp}_{m,1}) \oplus \bold{T}_m\oplus \bold{T}^{\perp}_m \\
  &\cong \left( (\bbR^3\otimes \mathfrak{g}^\perp_{m,0}) \oplus \bold{T}_m \right) \oplus \left( (\bbR^3\otimes \mathfrak{g}^\perp_{m,1}) \oplus \bold{T}_m^{\perp} \right)
 \end{aligned} 
\label{cor.4}\end{equation}
with $(\bbR^3\otimes \mathfrak{g}^\perp_{m,0}) \oplus \bold{T}_m$ identified with $V^{G_m}$.
 
 Let
$$
 \cC_{\cU}:= \{ \lambda p\; |\; p\in \cU, \; \lambda\ge 0\}
$$
be the corresponding cone in $\bold{M}$ with cross-section $\cU$.  Since $\mu$ is equivariant and $\zeta\in\bbR^3\otimes Z$ with $Z$ the center of $\mathfrak{g}$,  to check that $\widetilde{\mu^{-1}(-\zeta)}\cap \cC_{\cU}$ is a $p$-submanifold near $H_i$, it suffices to check that  $\widetilde{\mu^{-1}(-\zeta)}\cap \cC_{V}$ is a $p$-submanifold near $H_i$, where $\cC_V$ is the cone over $V$ seen as the subset $\{e\}\times V$ of $\cU$.   The invariant subspace  $V^{G_m}\cong (\bbR^3\otimes \mathfrak{g}^\perp_{m,0}) \oplus \bold{T}_m $ can be identified with the stratum $\overline{\mathfrak{s}}_i\cap \overline{\cC}_V$.  On the other hand, $\phi_{H_i}^{-1}(m)\cap \widetilde{\cC_V}$ corresponds to the $\QAC$ compactification of $(\bbR^3\otimes \mathfrak{g}^\perp_{m,0}) \oplus \bold{T}_m^{\perp}$ as an orthogonal representation of $G_m$.

Let $\check{\mu}_m= \check{\mu}_{m,0}+  \check{\mu}_{m,1}$ be the decomposition of $\check{\mu}_m$ in terms of the decomposition 
\begin{equation}
\bbR^3\otimes (\mathfrak{g}^{\perp}_m)^*= \bbR^3\otimes (\mathfrak{g}^{\perp}_{m,0})^*\oplus  \bbR^3\otimes (\mathfrak{g}^{\perp}_{m,1})^*.
\label{cor.5}\end{equation}
Then $\varpi_0:= \check{\mu}_{m,0}|_V$ and $\varpi_1:= \check{\mu}_{m,1}|_V$ can be seen as coordinates on the factors $ \bbR^3\otimes (\mathfrak{g}^{\perp}_{m,0})^*$ and $ \bbR^3\otimes (\mathfrak{g}^{\perp}_{m,1})^*$ in the decomposition \eqref{cor.4}.  Let also $\check{\varpi}$ and $\hat{\varpi}$ be choices of coordinates on $\bold{T}_m$ and $\bold{T}_m^\perp$, so that
$$
   \varpi=(\varpi_0,\check{\varpi},\varpi_1,\hat{\varpi})
$$
are coordinates on $V$ in terms of the decomposition \eqref{cor.4}.

 Now, if $\mu_V$ is the restriction of the moment map to $V$, then by homogeneity, on $\mathcal{C}_V$, $\mu$ is given by
$$
    \mu(\rho,\varpi)=\rho^2\mu_V(\varpi)
$$  
for $\varpi\in V$ and $\rho$ the distance function from the origin in $\bold{M}$.  On the other hand, in the coordinates $\varpi$, the  blow-up of $\overline{s}_i$ at infinity corresponds to introducing the coordinates $(u, \varpi_0,\check{\varpi} , v_1, \hat{v})$ with $u=\rho^{-1}$, $v_1=\frac{\varpi_1}{u}$ and $\hat{v}= \frac{\hat{\varpi}}{u}$.  In these coordinates, the fiber bundle $\phi_i: H_i\to S_i$ corresponds to the projection
$$
   ( \varpi_0,\check{\varpi} , v_1, \hat{v})\mapsto (\varpi_0,\check{\varpi}).
$$
If $\zeta^{\perp}_m= \zeta_{m,0}^{\perp}+ \zeta^{\perp}_{m,1}$ in terms of the decomposition \eqref{cor.5}, then in the coordinates $(u, \varpi_0,\check{\varpi} , v_1, \hat{v})$, the equations \eqref{cqac.17f} take the form
\begin{gather}
\label{cor.5a}  \varpi_0= -u^2\zeta^{\perp}_{m,0}, \\
\label{cor.5b} v_1= -u\zeta^{\perp}_{m,1}, \\
\label{cor.5c} \hat{\mu}_m(\hat{v}) + \hat{\mu}_m(v_1)= -\zeta_m.
\end{gather}
Notice that substituting \eqref{cor.5b} in \eqref{cor.5c} yields
\begin{equation}
   \hat{\mu}_m(\hat{v}) + u^2\hat{\mu}_m(-\zeta_{m,1}^{\perp})= -\zeta_m,
\label{cor.5d}\end{equation}
where $-\zeta^{\perp}_{m,1}$ is seen as fixed value of the coordinate $v_1$.  Now, equations \eqref{cor.5a}, \eqref{cor.5b} and \eqref{cor.5d} makes sense at $u=0$, in which case we obtain
\begin{gather}
\label{cor.6a} \varpi_0=0, \\
\label{cor.6b} v_1=0, \\
\label{cor.6c}  \hat{\mu}_m(\hat{v})= -\zeta_m.
\end{gather}
Equation \eqref{cor.6a} defines $\Sigma_{H_i}$ as a smooth submanifold of $S_{H_i}$ with no constraint on $\check{\varpi}$.  The other two equations defines $\widetilde{\mu^{-1}(-\zeta)}\cap \widetilde{\cC_V}\cap \phi_{H_i}^{-1}(m)$ in the interior of $\phi_{H_i}^{-1}(m)\setminus \pa \phi_{H_i}^{-1}(m)$.  The equation \eqref{cor.6b} indicates that $\widetilde{\mu^{-1}(-\zeta)}\cap \widetilde{\cC_V}\cap \phi_i^{-1}(m)$ lies in the $\QAC$ compactification $\widetilde{\bold{T}^{\perp}_m}$ included in the $\QAC$ compactification of $\bbR^3\otimes \mathfrak{g}_{m,1}^{\perp}\oplus \bold{T}^{\perp}_m$ and corresponds to $\widetilde{\hat{\mu}_m^{-1}(-\zeta_m)}$ inside  $\widetilde{\bold{T}^{\perp}_m}$.  In particular, since $\zeta$ is properly generic, $\widetilde{\hat{\mu}_m^{-1}(-\zeta_m)}$ is smooth in its interior.  Moreover, by induction on the depth, we can assume that $\widetilde{\hat{\mu}_m^{-1}(-\zeta_m)}$ is a $p$-submanifold inside $\widetilde{\bold{T}^{\perp}_m}$ on which $G_m$ acts freely.  The local description \eqref{cor.5a},\eqref{cor.5b} and \eqref{cor.5d} then shows that $\widetilde{\mu^{-1}(-\zeta)}$ is a p-submanifold near $\phi_{H_i}^{-1}(m)$ with a natural fiber bundle  
$$
H_i\cap\widetilde{\mu^{-1}(-\zeta)}\to \Sigma_{H_i}
$$
induced by $\phi_{H_i}$.  Since $G_m$ acts freely on $\widetilde{\hat{\mu}_m^{-1}(-\zeta_m)}$, the action of $G$ extends to a free $G$-action on $\widetilde{\mu^{-1}(-\zeta)}$ near $\phi_{H_i}^{-1}(m)$.  Since $m\in \Sigma_{H_i}$ was arbitrary, we see that the result holds near $H_i$.     

Along a non-minimal boundary hypersurface $H_i$,  we can assume by induction that we already know that $\widetilde{\mu^{-1}(-\zeta)}$ is a $p$-submanifold near $H_j$ for $H_j<H_i$.  Thus, it suffices to show that $\widetilde{\mu^{-1}(-\zeta)}$ is a $p$-submanifold near $\phi_{H_i}^{-1}(m)$ for $m$ in the interior of $\Sigma_{H_i}$, so that the same argument as before applies.  Clearly then $\widetilde{\mu^{-1}(-\zeta)}$ is smooth and has the claimed iterated fibration structure making it a manifold with fibered corners.  Moreover, the free action of $G$ on $\mu^{-1}(-\zeta)$ extends to a free smooth action on $\widetilde{\mu^{-1}(-\zeta)}$.  Clearly, for each $H_i\in\cM_1(\widetilde{\bold{M}})$, there is an induced action on $\Sigma_{H_i}$ making the fiber bundle \eqref{cqac.14a} $G$-equivariant. 
\end{proof}
\begin{remark}
In particular, notice that Theorem~\ref{cqac.14} shows that $\Sigma_{H_i}$ in \eqref{cqac.13} is a $p$-submanifold of $S_{H_i}$ with an induced iterated fibration structure.  Theorem~\ref{cqac.14} also implies that $\pa \overline{\mu^{-1}(0)}$ is a smoothly stratified space with resolution the manifold with fibered corners $H_\ell\cap \widetilde{\mu^{-1}(-\zeta)}= H_\ell\cap \widetilde{\mu^{-1}(0)}$.  By homogeneity, the subset $\mu^{-1}(0)$ is also a smoothly stratified space.
\label{cor.7}\end{remark}
\begin{remark}
For each $H_i\in \cM_1(\widetilde{\bold{M}})$, notice that the induced action of $G$ on $\Sigma_{H_i}$ has only one conjugacy class of stabilizer subgroups, namely the one associated to $\mathfrak{s}_i$.  In particular, the quotient $\Sigma_{H_i}/G$ is a manifold with corners and the iterated fibration structure of $\widetilde{\mu^{-1}(-\zeta)}$ induces one on the quotient $\widetilde{\mu^{-1}(-\zeta)}/G$.  For this reason, we say that the action of $G$ on $\widetilde{\mu^{-1}(-\zeta)}$ is \textbf{compatible} with the iterated fibration structure.
\label{cor.7b}\end{remark}
\begin{corollary}
For $\zeta\in \bbR^3\otimes Z$ properly generic, the Nakajima metric on the quiver variety $\mathfrak{M}_{\zeta}$ is an exact quasi-asymptotically conical metric with smooth expansion at infinity.
\label{cqac.21}\end{corollary}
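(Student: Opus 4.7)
The plan is to take the QAC compactification of $\mathfrak{M}_\zeta$ to be the quotient $\widetilde{\mathfrak{M}}_\zeta:=\widetilde{\mu^{-1}(-\zeta)}/G$. By Theorem~\ref{cqac.14}, the $G$-action on $\widetilde{\mu^{-1}(-\zeta)}$ is free and compatible with its iterated fibration structure, so the quotient is naturally a manifold with fibered corners whose boundary hypersurfaces $(H_i\cap\widetilde{\mu^{-1}(-\zeta)})/G$ fiber over the base $\Sigma_i/G$ inherited from $\phi_i:H_i\cap\widetilde{\mu^{-1}(-\zeta)}\to\Sigma_i$. On the maximal boundary hypersurface $H_\ell$, corresponding to the regular stratum of $\pa\overline{\bold{M}}$ (the one not blown up in Definition~\ref{cqac.10}), the stabilizers are trivial and $\phi_\ell=\operatorname{Id}$, so the additional condition distinguishing QAC from general QFB is automatic.

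To recognize the Nakajima metric as an exact QAC metric on $\widetilde{\mathfrak{M}}_\zeta$, I would compute its asymptotics at each boundary hypersurface $H_i$ of $\widetilde{\bold{M}}$ using the local coordinates $(u,\check\omega,\hat v)$ from the proof of Theorem~\ref{cqac.14}. On the cone $\cC_V$, the substitution $\rho=1/u$, $\hat\omega=u\hat v$ transforms the flat Euclidean metric on $\bold{M}$ into
\begin{equation*}
g_{\bold{M}}=\frac{du^2}{u^4}+\frac{g_{\check V}}{u^2}+d\hat v^2+O(u),
\end{equation*}
where $g_{\check V}$ is the flat metric on the stratum slice $\check V$ and $d\hat v^2$ is the flat metric on the fiber direction $\hat V=\bold{T}^\perp_m$. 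Restricting to $\mu^{-1}(-\zeta)$ via \eqref{dec.1} cuts out $\Sigma_i$ in the $\check\omega$-directions with an induced wedge metric $g_{\Sigma_i}$ (which, as analyzed in Section~\ref{bs.0}, is exact $3$-Sasakian) and cuts out the fiberwise level set $\hat\mu_m^{-1}(-\zeta_m)$ in the $\hat v$-directions. Performing the hyperK\"ahler quotient by $G$ amounts to orthogonally projecting onto the symplectic slice $\ker d\mu\cap(\mathfrak{g}\cdot q)^\perp$; since $G$ acts by isometries and restricts to $G_m$ on the fiber direction $\bold{T}^\perp_m$, this projection preserves the asymptotic decomposition and yields a leading-order product of the form \eqref{pt.1}, with wedge base $(\Sigma_i/G,g_{\Sigma_i})$ and fibrewise QAC metric on the smooth Nakajima quiver variety $\hat\mu_m^{-1}(-\zeta_m)/G_m$ (smoothness being guaranteed by proper genericity of $\zeta_m$).

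I would then proceed by induction on the depth of $\bold{M}$ as a $G$-space. The base case, where only the origin has non-trivial stabilizer, produces an exact asymptotically conical metric on a smooth quiver variety by standard arguments for hyperK\"ahler quotients of cones by compact groups acting freely off the origin. For the inductive step, the fibers of $\phi_i$ on $\widetilde{\mathfrak{M}}_\zeta$ are QAC compactifications of reduced Nakajima quiver varieties of smaller total dimension, so they carry exact QAC metrics by induction, while the base $(\Sigma_i/G,g_{\Sigma_i})$ inherits an exact wedge structure from the analogous analysis applied one level down. Combining these through the local normal form \eqref{dec.1} yields the required product-type QAC structure at $H_i$.

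The main obstacle is to control the remainder at the precise order demanded by exactness. Concretely one must verify that the perturbation terms $u\hat f(u,\check\omega,\hat v)$ and $u^2\check\nu_0(u,\check\omega,\hat v)$ in \eqref{dec.1}, together with the $O(u)$ correction to the orthogonal projection onto the symplectic slice, modify the leading-order product metric only by an element of $x_i\CI(\widetilde{\mathfrak{M}}_\zeta;S^2({}^{\QFB}T^*\widetilde{\mathfrak{M}}_\zeta))$ near each $H_i$. Once this vanishing order is established, smooth expansion at infinity follows automatically: since $\mu$ is polynomial and the $G$-action is algebraic, every correction admits a full Taylor expansion in the boundary defining functions, and the orthogonal projection onto the symplectic slice inherits this polyhomogeneity.
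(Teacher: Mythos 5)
Your overall framework coincides with the paper's: you take the $\QAC$ compactification $\widetilde{\mu^{-1}(-\zeta)}/G$, rely on Theorem~\ref{cqac.14} for the fibered corners structure and the free compatible $G$-action, and descend the metric through the quotient. Where you diverge is in how exactness is established, and this is where there is a genuine gap: the decisive step --- showing that the error terms $u\hat f$, $u^2\check\nu_0$ and the correction coming from projecting off the $G$-orbit directions perturb the product model only by an element of $x_i\CI(S^2({}^{\QFB}T^*))$ --- is exactly what you defer as ``the main obstacle'', and it is not routine. In particular, your displayed expansion $g_{\bold{M}}=\frac{du^2}{u^4}+\frac{g_{\check V}}{u^2}+d\hat v^2+O(u)$ hides cross terms such as $\hat v\, d\hat v\,\frac{du}{u}$ and $\hat v^2\lrp{\frac{du}{u}}^2$, and the symbol $O(u)$ must be interpreted in the $\QFB$ sense uniformly up to the corners of the fiber, where $\hat v$ is unbounded and comparable to inverse powers of the other boundary defining functions; deciding that such terms lie in $x_i\CI(S^2({}^{\QFB}T^*\widetilde{\bold{M}}))$ is precisely the bookkeeping carried out in \cite{CDR}, and your proposal does not supply it. Likewise, the claim that passing to the hyperK\"ahler quotient ``preserves the asymptotic decomposition'' via orthogonal projection onto the symplectic slice is asserted rather than proved.

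The paper sidesteps all of this with a soft argument: by \cite{CDR}, the flat Euclidean metric on $\bold{M}$ is already an exact $\QAC$ metric with respect to $\widetilde{\bold{M}}$; since Theorem~\ref{cqac.14} exhibits $\widetilde{\mu^{-1}(-\zeta)}$ as a $p$-submanifold whose iterated fibration structure is induced from that of $\widetilde{\bold{M}}$, the restriction of the Euclidean metric to it is automatically an exact $\QAC$ metric; and since $G$ acts freely by isometries compatibly with the fibered corners structure, this restricted metric descends to $\mathfrak{M}_{\zeta}$, where it is the Nakajima metric, with smooth expansion at infinity. If you wish to keep your computational route you must actually carry out the exactness verification you flag (in effect reproving the relevant part of \cite{CDR} together with the restriction statement); the efficient fix is to quote \cite{CDR} for the ambient flat metric and conclude by restriction and descent as the paper does.
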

\begin{proof}
By \cite{CDR}, the Euclidean metric on $\bold{M}$ can be seen as an exact $\QAC$ metric on $\widetilde{\bold{M}}$. Since the iterated fibration structure of $\widetilde{\mu^{-1}(-\zeta)}$ is induced by the one of $\widetilde{\bold{M}}$ through the inclusion $\widetilde{\mu^{-1}(-\zeta)}\to \widetilde{\bold{M}}$, the restriction of the Euclidean metric of $\bold{M}$ to $\widetilde{\mu^{-1}(-\zeta)}$ is automatically an exact $\QAC$ metric.  Since $G$ acts freely on $\widetilde{\mu^{-1}(-\zeta)}$ in a way compatible with the metric and the iterated fibration structure, this metrics descends to induce an exact $\QAC$ metric on the quotient $\widetilde{\mu^{-1}(-\zeta)}/G$ with smooth asymptotic expansion at infinity.
\end{proof}

\section{$L^2$-cohomology of incomplete metrics}\label{l2i.0}

This section will recall basic facts about the $L^2$-cohomology of incomplete Riemannian metrics, notably about the $L^2$-cohomology of wedge metrics.  We will also introduce a $L^2$-K\"ahler package for such metrics.  It is weaker than the one of \cite{BL93}, but has the advantage of giving the version of the Hard Lefschetz theorem that we need for the class of wedge K\"ahler metrics we will consider later on.   Let us first recall the notion of Hilbert complexes introduced in \cite{BL92}.
\begin{definition}
A \textbf{Hilbert complex} is a sequence
\begin{equation}
 \xymatrix{
 0 \ar[r] & L_0 \ar[r]^{D_0} & L_1 \ar[r]^{D_1}\ar[r]& \cdots \ar[r]^{D_{n-1}} & L_n \ar[r] & 0 
 }
\label{l2i.1a}\end{equation}
with each $L_i$ a separable Hilbert space and $D_i: L_i\to L_{i+1}$ a closed operator with dense domain $\cD(D_i)$ such that $\Im (D_i)\subset \cD(D_{i+1})$ and $D_{i+1}\circ D_i=0$.  Thus, though \eqref{l2i.1a} is not properly speaking a complex in general, it induces a complex
\begin{equation}
 \xymatrix{
 0 \ar[r] & \cD(D_0) \ar[r]^{D_0} & \cD(D_1) \ar[r]^{D_1}\ar[r]& \cdots \ar[r]^{D_{n-1}} & \cD(D_n) \ar[r] & 0. 
 }
\label{l2i.1b}\end{equation}
\label{l2i.1}\end{definition}
There is a nice Hodge theory attached to such a Hilbert complex, namely there is a natural dual Hilbert complex
\begin{equation}
\xymatrix{
  0  & L_0 \ar[l] & L_1 \ar[l]_{D_0^*} & L_2 \ar[l]_{D_1^*} & \cdots \ar[l]_{D_2^*} & L_n \ar[l]_{D_{n-1}^*} & \ar[l] 0 
}
\label{l2i.3}\end{equation}
with $D_k^*$ the adjoint of $D_k$, as well as an associated `Hodge Laplacian'
$$
   \Delta_k= D_k^*D_k + D_{k-1}D_{k-1}^* \quad \mbox{on} \; L_k
$$
with domain
$$
 \cD(\Delta_k)=\{ u\in \cD(D_k)\cap \cD(D_{k-1}^*)\; | \; D_ku\in \cD(D_k^*), \; D_{k-1}^*u\in \cD(D_{k-1})\}.
$$
By \cite[Lemma~2.1]{BL92}, there is a weak Kodaira decomposition
\begin{equation}
L_k= \ker\Delta_k \oplus \overline{\Im D_{k-1}}\oplus \overline{\Im D_{k}^*}.
\label{l2i.4}\end{equation}

In our setting, the Hilbert complexes will be induced by the exterior differential on a possibly incomplete oriented Riemannian manifold $(M,g)$ equipped with a flat Euclidean vector bundle $E\to M$.  Thus, our $L_k$ will be the space $L^2\Omega^k(M;E,g)$ of sections of $\Lambda^k(T^*M)\otimes E$ that are $L^2$ with respect to the $L^2$-norm induced by $g$ and the bundle metric on $E$.  These are separable Hilbert spaces with exterior differential densely defined on smooth forms of compact support.  In general however, it can admit different closed extensions.  We will consider the following two.  
\begin{definition}
The \textbf{minimal extension} $d_{\min,k}$ of the exterior differential on forms of degree $k$ is the graph closure of $d$ on $\Omega^k_c(M;E)$, namely
\begin{multline}
  \cD(d_{\min,k})= \{ \nu\in L^2\Omega^kM;E,g) \; | \; \exists \nu_j\in \Omega^k_c(M;E) \; \mbox{such that} \; \nu_j\to \nu \in L^2\Omega^k(M;E,g) \\ \mbox{and} \; \{d\nu_j\} \;\mbox{converges in $L^2$ to some } \; \eta\in L^2\Omega^{k+1}(M;E,g)\}.
\end{multline}
For such a $\nu$ with such a sequence $\{\nu_j\}$, we then have 
$$
d_{\min,k}\nu:= \lim_{j\to \infty} d\nu_j= \eta\in L^2\Omega^{k+1}(M;E,g). 
$$
On the other hand, the \textbf{maximal extension} $d_{\max,k}$ of $d$ on forms of degree $k$ is the closed extension with domain 
$$
   \cD(d_{\max,k})= \{\nu\in L^2\Omega^k(M;E,g) \; | \; d\nu\in L^2\Omega^*(M;E,g)\}.
$$
For such a $\nu\in \cD(d_{\max,k})$, $d_{\max,k}\nu= d\nu\in L^2\Omega^{k+1}(M;E,g)$.
\label{l2i.5}\end{definition}

On a complete oriented Riemannian manifold, these two closed extensions coincide by a result of Gaffney \cite{Gaffney1951} and there is in fact a unique closed extension.  In general however, these two extensions may differ.  A simple but important observation is that $d_{\min,k}$ and $d_{\max,k}$ only depend on the quasi-isometric class of the metric $g$.  They define Hilbert complexes 
\begin{gather}
\label{l2i.6a}  \xymatrix{ \cdots \ar[r]& L^2\Omega^k(M;E,g)\ar[r]^{d_{\min,k}} & L^2\Omega^{k+1}(M;E,g)\ar[r] & \cdots }, \\
\label{l2i.6b}  \xymatrix{ \cdots \ar[r]& L^2\Omega^k(M;E,g)\ar[r]^{d_{\max,k}} & L^2\Omega^{k+1}(M;E,g)\ar[r] & \cdots }.
\end{gather}
If $d^*$ is the formal adjoint of the exterior differential $d$, then it admits a minimal and a maximal extensions $d^*_{\min,k}$ and $d^*_{\max,k}$ on forms of degree $k+1$, so that $d^*_{\min,k}$ is the adjoint of $d_{\max,k}$ and $d^{*}_{\max,k}$ is the adjoint of $d_{\min,k}$.  This lead to two different Hodge Laplacians, namely the \textbf{relative Hodge Laplacian}
\begin{equation}
  \Delta_{\rel}:= d^*_{\max}d_{\min}+ d_{\min}d^{*}_{\max}
\label{l2i.7a}\end{equation}
associated to the Hilbert complex \eqref{l2i.6a}, where $d_{\min/\max}$  (respectively $d^{*}_{\min/\max}$) denotes the minimal/maximal extension of $d$ (respectively $d^*$), and the \textbf{absolute Hodge Laplacian}
\begin{equation}
 \Delta_{\abs}:= d^*_{\min}d_{\max}+ d_{\max}d^{*}_{\min}
\label{l2i.7b}\end{equation}
associated to the Hilbert complex \eqref{l2i.6b}.
\begin{definition}
The \textbf{minimal $L^2$-cohomology} of $(M,E,g)$ is the cohomology of the (complex associated to the) Hilbert complex \eqref{l2i.6a},
while the \textbf{maximal $L^2$-cohomology} of $(M,E,g)$ is the cohomology of the (complex associated to the) Hilbert complex \eqref{l2i.6b}.  We denote the corresponding cohomology groups of degree $k$ by respectively $L^2H^k_{\min}(M;E,g)$ and $L^2H^k_{\max}(M;E,g)$.  
\label{l2i.8}\end{definition}
\begin{remark}
 When $d$ is essentially self-adjoint, for instance when $g$ is complete, then $d_{\min}=d_{\max}$ and these cohomology groups agree, in which case we can denote them unambiguously by $L^2H^k(M;E,g)$. 
\label{l2i.8b}\end{remark}
These $L^2$-cohomology groups can often be infinite dimensional, in which case it can be useful to consider the minimal/maximal \textbf{reduced}   $L^2$-cohomology groups
\begin{equation}
 L^2_rH^k_{\min}(M;E,g):= \ker d_{\min,k}/ \overline{\Im d_{\min,k-1}} \quad \mbox{and} \quad L^2_rH^k_{\max}(M;E,g):= \ker d_{\max,k}/ \overline{\Im d_{\max,k-1}}.
\label{l2i.9}\end{equation}
Reduced or not, the minimal and maximal $L^2$-cohomology groups only depend on the quasi-isometric class of the metric $g$.  In general, the reduced minimal and maximal $L^2$-cohomology groups do not correspond to the cohomology groups of a complex, but they are also referred to as Hodge cohomology groups \cite{HHM2004}, since they can be identified with a subspace of $L^2$-harmonic forms.  Indeed, the weak Kodaira decompositions of the Hilbert complexes \eqref{l2i.6a} and \eqref{l2i.6b} induce natural identifications
\begin{gather}
\label{l2i.9a} L^2_rH^*_{\min}(M;E,g)\cong \cH^*_{\rel}(M;E,g):= \ker \Delta_{\rel}= \ker d_{\min}\cap\ker d^*_{\max}, \\
L^2_rH^*_{\max}(M;E,g)\cong \cH^*_{\abs}(M;E,g):= \ker \Delta_{\abs}= \ker d_{\max}\cap\ker d^*_{\min}. 
\label{l2i.9b}\end{gather}
These identifications show in particular that the dimension of the kernel $\cH^k_{\rel/\max}(M;E,g)$ of $\Delta_{\rel/\abs}$ in degree $k$ only depends on the quasi-isometric class of the metric.  

Following \cite{HM05}, we can associate two other types of Hodge cohomology groups to $(M;E,g)$.  The first one is the \textbf{maximal Hodge cohomology group}, given by
\begin{equation}
 \cH^k_{\max}(M;E,g):= \ker d_{\max,k}\cap \ker d^*_{\max,k-1}\cong \ker d_{\max,k}/ \overline{\Im d_{\min,k-1}},
\label{l2i.10a}\end{equation}
inducing the weak Kodaira decomposition
\begin{equation}
L^2\Omega^k(M;E,g)= \cH^k_{\max}(M;E,g)\oplus \overline{\Im (d_{\min,k-1})}\oplus  \overline{\Im (d^*_{\min,k})}.
\label{l2i.10b}\end{equation}
The other is the \textbf{minimal Hodge cohomology group}, given by
\begin{equation}
\begin{aligned}
 \cH^k_{\min}(M;E,g)&:= \ker d_{\min,k}\cap \ker d^*_{\min,k-1}= \cH^k_{\rel}(M;E,g)\cap \cH^k_{\abs}(M;E,g), \\
  &\cong \ker d_{\min,k}/ \lrp{ \overline{\Im(d_{\max,k-1})}\cap \ker d_{\min,k} }.
 \end{aligned} 
\label{l2i.10a}\end{equation}
For this latter group, there is no weak Kodaira decomposition in general, since as pointed out in \cite{HM05}, the closure of the images of $d_{\max,k-1}$ and $d^*_{\max,k}$ are not orthogonal in general.  However, $\cH^k_{\min}(M;E,g)$ can be realized as the kernel of the Friedrichs extension of the Hodge Laplacian.  Indeed, by the weak Kodaira decomposition \eqref{l2i.10b}, notice that the minimal extension of the Hodge-deRham operator of $(M;E,g)$ is 
$$
  (d+d^*)_{\min}= d_{\min}+d^{*}_{\min} \quad \mbox{with domain} \quad \cD((d+d^*)_{\min})= \cD(d_{\min})\cap \cD(d^*_{\min}),
$$
so that 
$$
        \cH^k_{\min}(M;E,g)= \ker (d+d^*)_{\min}= \ker ((d+d^*)_{\max}(d+d^*)_{\min})= \ker \Delta_{\Fr},
$$
where 
\begin{equation}
\Delta_{\Fr}:=(d+d^*)_{\max}(d+d^*)_{\min}
\label{l2i.11}\end{equation}
is the Friedrichs extension of the Hodge Laplacian.  From the above definitions, it clearly follows that the various Hodge cohomology groups are related via the following diagram of natural inclusions
\begin{equation}
\xymatrix{
      & \cH^k_{\rel}(M;E,g) \ar@{^{(}->}[rd] & \\
     \cH^k_{\min}(M;E,g) \ar@{^{(}->}[ru] \ar@{^{(}->}[rd] & & \cH^k_{\max}(M;E,g) \\
     & \cH^k_{\abs}(M;E,g). \ar@{^{(}->}[ru]&
}
\label{l2i.12}\end{equation}
\begin{lemma}
If the Hodge-deRham operator $\eth=d+d^*$ is essentially self-adjoint, then  
\begin{equation}
 \cH^*_{\min}(M;E,g)=\cH^*_{\rel}(M;E,g)=\cH^*_{\abs}(M;E,g)=\cH^*_{\max}(M;E,g).
\label{l2i.13a}\end{equation}
If furthermore $\eth_{\max}=\eth_{\min}$ is Fredholm, then these spaces are finite dimensional and
\begin{equation}
     L^2H^k_{\min}(M;E,g)=L^2_rH^k_{\min}(M;E,g)\cong  \cH^k_{\min}(M;E,g)
\label{l2i.13b}\end{equation}
with the identifications \eqref{l2i.13a} and \eqref{l2i.13b} valid for any Riemannian metric $g'$ in the quasi-isometric class of $g$.  Moreover, in this case, there is a Poincar\'e duality
\begin{equation}
 (L^2H^k_{\min}(M;E,g))^*\cong L^2H^{\dim M-k}_{\min}(M;E,g) \quad \forall k.
\label{l2i.13c}\end{equation}
\label{l2i.13}\end{lemma}
\begin{proof}
From the diagram \eqref{l2i.12}, it suffices to show that $\cH^*_{\min}(M;E,g)=\cH^*_{\max}(M;E,g)$ to establish \eqref{l2i.13a}.  By assumption, $\eth_{\min}=\eth_{\max}$, so
$$
   \cD(d_{\min})\cap \cD(d^*_{\min})\subset \cD(d_{\max})\cap\cD(d^*_{\max})\subset \cD(\eth_{\max})=\cD(\eth_{\min})=\cD(d_{\min})\cap \cD(d^*_{\min}).
$$
This means that
$$
 \cD(d_{\min})\cap \cD(d^*_{\min})= \cD(d_{\max})\cap\cD(d^*_{\max}),
$$
which implies that 
$$
    \cH^*_{\min}(M;E,g)= \ker d_{\min}\cap\ker d^*_{\min}= \ker d_{\max}\cap\ker d^*_{\max}= \cH^*_{\max}(M;E,g)
$$
as claimed.  If furthermore $\eth_{\min}$ is Fredholm, then $\cH^*_{\min}(M;E,g)=\ker\eth_{\min}$ is finite dimensional and we deduce from \eqref{l2i.10b} and the Fredholmness of $\eth_{\min}$ that
$$
   L^2\Omega^*(M;E,g)= \cH^*_{\min}(M;E,g)\oplus \Im \eth_{\min}= \cH^*_{\min}(M;E,g)\oplus \Im d_{\min}\oplus \Im d^*_{\min}.
$$
In particular, $\Im d_{\min}= \overline{\Im d_{\min}}$, so
$$
 L^2H^*_{\min}(M;E,g)=L^2_rH^*_{\min}(M;E,g).
$$
Since $L^2H^*_{\min}(M;E,g)$, $L^2_rH^*_{\min}(M;E,g)$ and the dimension of the spaces in \eqref{l2i.13a} only depend on the quasi-isometric class of the metric $g$, we see that \eqref{l2i.13a} and \eqref{l2i.13b} also hold for any metric quasi-isometric to $g$.  Since the Hodge star operator induces the Poincar\'e duality
\begin{equation}
      (\cH^{k}_{\rel}(M\setminus\pa M;E,g))^*\cong \cH^{\dim M-k}_{\abs}(M;E,g) \quad \forall k,
\label{l2i.13d}\end{equation}
the Poincar\'e duality \eqref{l2i.13c} follows from \eqref{l2i.13d} and the identifications \eqref{l2i.13a} and \eqref{l2i.13b}.
\end{proof}

Relying on \cite{ALMP2012}, the previous result applies as follows to wedge metrics.

\begin{theorem}
Let $M$ be a compact oriented manifold with fibered corners.  Suppose also that for each $H\in \cM_1(M)$, $S_H$ is also oriented. Let $g_w$ be an associated wedge metric and let $E\to M$ be a flat Euclidean vector bundle on $M$.  If for each $H\in \cM_1(M)$ and $s\in S_H$, 
\begin{equation}
 \cH^{\frac{\dim\phi_H^{-1}(s)}2}_{\min}(\phi^{-1}_H(s)\setminus \pa \phi_H^{-1}(s);E,\kappa_{w,H,s})=\{0\}
\label{l2i.14a}\end{equation}
for $\kappa_{w,H,s}$ a wedge metric on $\phi_H^{-1}(s)$,
then 
\begin{equation}
L^2H^*_{\min}(M\setminus \pa M;E,g_w)=L^2_rH^*_{\min}(M\setminus \pa M;E,g_w) \cong \cH^*_{\min}(M\setminus \pa M;E,g_w) 
\label{l2i.14b}\end{equation} 
and 
\begin{equation}
 \cH^*_{\min}(M\setminus \pa M;E,g_w) =\cH^*_{\rel}(M\setminus \pa M;E,g_w)=\cH^*_{\abs}(M\setminus \pa M;E,g_w)=\cH^*_{\max}(M\setminus \pa M;E,g_w)
\label{l2i.14c}\end{equation}
with all these groups finite dimensional.  Moreover, there is a Poincar\'e duality 
\begin{equation}
(L^2H^k_{\min}(M\setminus \pa M;E,g_w))^*\cong L^2H^{\dim M-k}_{\min}(M\setminus \pa M;E,g_w) \quad \forall k.
\label{l2i.14d}\end{equation}
\label{l2i.14}\end{theorem}
\begin{proof}
The condition \eqref{l2i.14a} ensures that the Witt condition of \cite[(5.4) b)]{ALMP2012} is satisfied.  By \cite[Proposition~5.4 and Theorem~1.1]{ALMP2012}, there exists a wedge metric $\hat{g}_w$ on $M$ such that the associated Hodge-deRham operator is essentially self-adjoint and Fredholm, so the result follows from Lemma~\ref{l2i.13}.  Technically speaking, the results \cite[Proposition~5.4 and Theorem 1.1]{ALMP2012} are formulated with $E$ trivial of rank $1$, but the same results hold with essentially the same proof when the Hodge-deRham operator acts on the sections of a flat Euclidean vector bundle $E\to M$.   
\end{proof}
\begin{remark}
If the fibers of $\phi_H$ are odd dimensional for each $H\in \cM_1(M)$, the condition \eqref{l2i.14a} is trivially satisfied for any flat Euclidean vector bundle $E$.  In the examples we will consider in subsequent sections, this is how we will check that condition \eqref{l2i.14a} holds.
\label{l2i.15}\end{remark}
\begin{remark}
If the associated smoothly stratified space $\hM_{\phi}$ is an orbifold and the wedge metric $g_w$ is smooth in the orbifold sense, then condition \eqref{l2i.14a} is also automatically satisfied since $\phi_H^{-1}(s)$ is the finite quotient of a sphere by a finite subgroup of the orthogonal group.  In this case, \cite[Theorem~1.1]{ALMP2012} applies directly to $g_w$ to show that the corresponding Hodge-deRham operator is essentially self-adjoint and Fredholm.  In fact, by elliptic regularity, its unique self-adjoint extension has domain corresponding to the corresponding orbifold $L^2$-Sobolev space of order 1 \cite{Farsi2001}.  

More precisely, to see that  \cite[Theorem~1.1]{ALMP2012} applies directly to $g_w$, we need for each $H\in\cM_1(M)$ and $s\in S_H$ to check that the Hodge-deRham operator associated to the metric $\kappa_{w,H,s}$ has no eigenvalue in $(-1,1)\setminus\{0\}$ \cite[Assumption (5.4) a)]{ALMP2012}.  But since $(\phi_H^{-1}(s),\kappa_{w,H,s})$ corresponds to a quotient of the unit sphere with its canonical metric by the action of a finite subgroup of orthogonal transformations, the fact that the Hodge-deRham operator   has no eigenvalue in $(-1,1)\setminus\{0\}$ follows from the Gallot-Meyer result \cite{GM1975}.

\label{l2i.15b}\end{remark}

Coming back to a possibly incomplete oriented Riemannian manifold $(M,g)$ equipped with a flat Euclidean vector bundle $E\to M$,  suppose now that $g$ is K\"ahler with complex structure $I$.  Let $E_{\bbC}$ be the complexification of $E$, namely $E_{\bbC}$ is the flat Hermitian vector bundle with fiber above $m$ given by $E_m\otimes_{\bbR}\bbC$.  This vector bundle is automatically holomorphic.  There is also a decomposition
\begin{equation}
L^2\Omega^k(M;E_{\bbC},g)= \bigoplus_{p+q=k} L^2\Omega^{p,q}(M;E_{\bbC},g))
\label {l2i.16}\end{equation}
with
$$
L^2\Omega^{p,q}(M;E_{\bbC},g))= L^2(M;\Lambda^p(T^{1,0}M)^*\wedge \Lambda^q(T^{0,1}M)^{*}\otimes E_{\bbC},g),
$$
where $T^{1,0}M$ and $T^{0,1}M$ are the subbundles of the complexification $T_{\bbC}M$ of the tangent bundle $TM$ on which $I$ acts by multiplication by $\sqrt{-1}$ and $-\sqrt{-1}$ respectively.  There are also natural operators
$$
\db: \Omega^{p,q}_c(M;E_{\bbC})\to \Omega^{p,q+1}_c(M;E_{\bbC}) \quad \mbox{and} \quad \pa: \Omega^{p,q}_c(M;E_{\bbC})\to \Omega^{p+1,q}_c(M;E_{\bbC})
$$
such that the exterior differential decomposes as $d=\pa+\db$.  If $\db^*$ is the formal adjoint of $\db$, then we can consider the associated Dolbeault operator $\db+\db^*$.  It is well-known (see for instance \cite{Huybrechts}) that the corresponding Laplacian
$$
   \Delta_{\db}:= (\db+\db^*)^2= \db\db^*+\db^*\db,
$$
is half the Hodge Laplacian, namely 
\begin{equation} 
\Delta_{\db}=\frac12 (d+d^*)^2.
\label{l2i.17}\end{equation}  
\begin{lemma}
If $g$ is a K\"ahler metric, then the self-adjoint extension
$$
    2(\db+\db^*)_{\max}(\db+\db^*)_{\min}
$$
of $2\Delta_{\db}$ coincides with the Friedrichs extension \eqref{l2i.11} of the Hodge Laplacian acting on forms taking values in $E_{\bbC}$.  
\label{l2i.18}\end{lemma}
\begin{proof}
Given the identification \eqref{l2i.17}, this is a standard argument, see for instance the proof of \cite[Lemma~3.1]{BL93}.
\end{proof}
Since $\Delta_{\db}$ preserves the bidegree of the decomposition \eqref{l2i.16}, this yields the following.
\begin{proposition}
If $g$ is a K\"ahler metric, then
$$
   \cH^k_{\min}(M;E_{\bbC},g)= \bigoplus_{p+q=k} \cH^{p,q}_{\min}(M;E_{\bbC},g),
$$
where 
$$
      \cH^{p,q}_{\min}(M;E_{\bbC},g)=  \cH^{p+q}_{\min}(M;E_{\bbC},g)\cap L^2\Omega^{p,q}(M;E_{\bbC},g).
$$
In particular, the complex structure $I$ acts unitarily on $\cH^k_{\min}(M;E_{\bbC},g)$ and orthogonally on $\cH^k_{\min}(M;E,g)$.
\label{l2i.19}\end{proposition}
\begin{remark}
To show that $I$ acts unitarily on $\cH^k_{\min}(M;E_{\bbC},g)$ (the kernel of the Friedrichs extension of the Hodge Laplacian), we can also proceed as in the proof of \cite[Theorem~5.9]{BL93}.
\label{l2i.19f}\end{remark}
If $\omega$ is the K\"ahler form of $g$, we can consider the bounded operator
$$
    \begin{array}{llcl}
    L: & L^2\Omega^k(M;E_{\bbC},g)& \to & L^2\Omega^{k+2}(M;E_{\bbC},g) \\
        & \eta & \mapsto &\omega\wedge \eta
    \end{array}
$$
and its adjoint $L^*$.  
\begin{corollary}
The operators $L$ and $L^*$ induce well-defined maps
\begin{equation}
 L: \cH^k_{\min}(M;E_{\bbC},g) \to  \cH^{k+2}_{\min}(M;E_{\bbC},g) 
\label{l2i.20a}\end{equation}
and
\begin{equation}
 L^*: \cH^k_{\min}(M;E_{\bbC},g) \to  \cH^{k-2}_{\min}(M;E_{\bbC},g). 
\label{l2i.20b}\end{equation}
\label {l2i.20}\end{corollary}
\begin{proof}
Since the K\"ahler form is a closed $2$-form, 
\begin{equation}
    [L,d]=0.
\label{l2i.21}\end{equation}
Taking the formal adjoint of this equation also yields
$$
  [L^*,d^*]=0.
$$
On the other hand, it is well-known, see for instance \cite[Proposition~3.1.12]{Huybrechts}, that
$$
    [L,d^{*}]= d^c \quad \mbox{and} \quad [L^*,d]= -(d^c)^*,
$$
where 
$$
   d^c= -I^*d I= \sqrt{-1}(\db-\pa).
$$
To check that the map \eqref{l2i.20a} is well-defined, it suffices then to use the fact that $I$ acts unitarily on $\cH^k_{\min}(M;E_{\bbC},g)$.  Indeed, given $\eta\in \cH^{k}_{\min}(M;E_{\bbC},g)$, we know by Proposition~\ref{l2i.19} that $I\eta\in \cH^{k}_{\min}(M;E_{\bbC},g)$, hence
$$
  d_{\min}L\eta= Ld_{\min} \eta=0 \quad \mbox{by \eqref{l2i.21}}
$$
and
$$
d^*_{\min}L\eta=Ld^*_{\min} \eta -[L,d^*_{\min}]\eta=0-d^c\eta= + I^* d_{\min}I\eta =0,
$$
showing that $L\eta\in \cH^{k+2}_{\min}(M;E_{\bbC},g)$.  One can show similarly that the map \eqref{l2i.20b} is well-defined.
\end{proof}
This can be used to deduce the following $L^2$-version of the Hard Lefschetz theorem.
\begin{corollary}
If $\dim \cH^*_{\min}(M;E_{\bbC},g)<\infty$, then the operators $L$ and $L^*$ induce isomorphisms
\begin{equation}
 L^k: \cH^{\frac{\dim M}2-k}_{\min}(M;E_{\bbC},g)\to  \cH^{\frac{\dim M}2+k}_{\min}(M;E_{\bbC},g)
\label{l2i.22a}\end{equation}
and 
\begin{equation}
 (L^*)^k: \cH^{\frac{\dim M}2+k}_{\min}(M;E_{\bbC},g)\to  \cH^{\frac{\dim M}2-k}_{\min}(M;E_{\bbC},g)
\label{l2i.22b}\end{equation}
for $k\in\{1,\ldots, \frac{\dim M}2\}$.
\label{l2i.22}\end{corollary}
\begin{proof}
Given Corollary~\ref{l2i.20}, this is a standard argument relying on the representation of theory of $\mathfrak{sl}(2,\bbC)$ and the fact that 
$$
    [L,L^*]= n-\frac{\dim M}2
$$
on forms of degree $n$, see for instance \cite[Proposition~3.3.13]{Huybrechts}.
\end{proof}

In particular, for wedge metrics, we can combine Theorem~\ref{l2i.14} with Corollary~\ref{l2i.22} to obtain the following.
\begin{corollary}
Let $M$, $g_w$ and $E$ be as in Theorem~\ref{l2i.14}.  If the metric $g_w$ is K\"ahler, then the operator $L$  induces an isomorphism
\begin{equation}
 L^k: L^2H^{\frac{\dim M}2-k}_{\min}(M\setminus \pa M;E_{\bbC},g_w)\to  L^2H^{\frac{\dim M}2+k}_{\min}(M\setminus\pa M;E_{\bbC},g_w)
\label{l2i.23a}\end{equation}
for $k\in\{1,\ldots, \frac{\dim M}2\}$.  Furthermore, defining the primitive $L^2$-cohomology groups of degree $k$ by
$$
   L^2P^m_{\min}(M\setminus\pa M;E_{\bbC},g_w):= \ker L^*\cap \cH^{m}_{\min}(M\setminus\pa M;E_{\bbC},g_w)
$$
yields the Lefschetz decomposition
\begin{equation}
 L^2H^{m}_{\min}(M\setminus\pa M;E_{\bbC},g_w)= \bigoplus_{k} L^2P^{m-2k}_{\min}(M\setminus\pa M;E_{\bbC},g_w).
\label{l2i.23c}\end{equation}

\label{l2i.23}\end{corollary}

\section{Wedge $3$-Sasakian manifolds}\label{bs.0}

In this section, we will briefly review the notion of $3$-Sasakian manifold and allow for possible singularities of wedge type.  In this singular setting, we will then show that $L^2$-harmonic forms are $\Sp(1)$-invariant with respect to the natural $\Sp(1)$-action.   

Consider a Riemannian manifold $(\cS,g)$ with Levi-Civita connection $\nabla$.  For $\xi$ a vector field on $\cS$, let $\eta$ denote the $1$-form dual to $\xi$ and let $\Xi$ be the endomorphism of the tangent bundle defined by $\Xi(X)=\nabla_X\xi$.  Notice that $\xi$ will be a Killing vector field if and only if $\Xi$ is skew-symmetric.

\begin{definition}
The triple $(\cS,g,\xi)$ is a \textbf{Sasakian manifold} if $\xi$ is a Killing vector field of unit length and 
$$
       (\nabla_X\Xi)(Y)= \eta(Y)X-g(X,Y)\xi
$$
for all vector fields $X$ and $Y$.  In this case, we say $g$ is a \textbf{Sasakian metric}.
\label{bs.1}\end{definition}
Referring to \cite{Boyer-Galicki} and references therein for more details, let us recall that one of the main features of a Sasakian manifold is that the associated cone metric 
$$
     dr^2+ r^2g
$$
on $\bbR^+\times\cS$ is K\"ahler.  In particular, Sasakian manifolds are always odd dimensional.  In terms of the complex structure $J$ of the K\"ahler cone, the Killing vector field $\xi$ is then given by $J(r\frac{\pa}{\pa r})$ when $\cS$ is identified with the cross-section $\{1\}\times \cS$ of the cone, while the K\"ahler form of the K\"ahler cone metric is given by
$$
           \omega= \frac{\sqrt{-1}}2 \pa\db r^2.  
$$
When the K\"ahler cone is Ricci-flat, the Sasakian manifold is Einstein with positive Einstein constant equal to $\dim \cS-1$.  Requiring furthermore  that the K\"ahler cone be hyperK\"ahler yields the following structure on $\cS$.

\begin{definition}
A \textbf{$3$-Sasakian manifold} is a Riemannian manifold $(\cS,g)$ such that the cone metric $dr^2+r^2g$ on $\bbR^+\times \cS$ is hyperK\"ahler.  Equivalently, a $3$-Sasakian manifold is a Riemannian manifold $(\cS,g)$ admitting three distinct Sasakian structures with Killing vector fields $\xi^1,\xi^2$ and $\xi^3$ mutually orthogonal and such that

\begin{equation}
[\xi^a,\xi^b]= \sum_{c=1}^3 \epsilon^{abc}\xi^c \quad \mbox{for} \quad a,b,c\in\{1,2,3\}.
\label{bs.2a}\end{equation}

\label{bs.2}\end{definition} 

By \eqref{bs.2a}, the vector fields $\xi^1,\xi^2$ and $\xi^3$ generate a Lie algebra naturally isomorphic to the Lie algebra of $\Sp(1)$.  In fact, by Frobenius theorem, they induce a foliation $\cF$ on $\cS$ and correspond to the infinitesimal generators of an action of $\Sp(1)$ on $\cS$ with the leaves of $\cF$ corresponding to the orbits of this $\Sp(1)$-action.  The $3$-Sasakian structure behaves nicely with respect to this foliation.  More precisely, referring to \cite[Theorem~1.3]{Galicki-Salamon1996} or \cite[Proposition~13.3.11 and Theorem~13.3.13]{Boyer-Galicki} for further details and references, there is the following well-known result.

\begin{theorem}
Let $(\cS,g,\xi^a)$ be a $3$-Sasakian manifold of dimension $4n+3$ such that the vector fields $\xi^1,\xi^2$ and $\xi^3$ are complete.  Then:

\begin{enumerate}
\item $g$ is Einstein with scalar curvature $2(2n+1)(4n+3)$;
\item The foliation $\cF$ generated by $\xi^1,\xi^2$ and $\xi^3$ is Riemannian with respect to the metric $g$;
\item Each leaf is totally geodesic and of constant curvature $1$, while the space of leaves $Q$ is a quaternionic-K\"ahler orbifold of scalar curvature $16n(n+2)$;
\item The natural projection $\pi: \cS\to Q$ is a principal orbibundle with group $\Sp(1)$ or $\SO(3)$.
\end{enumerate}

\label{bs.3}\end{theorem}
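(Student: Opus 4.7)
The plan is to reduce every claim to a statement about the hyperK\"ahler cone $C(\cS) := (\bbR^+ \times \cS,\, g_C = dr^2 + r^2 g)$, since the $3$-Sasakian hypothesis is precisely that $g_C$ is hyperK\"ahler. For (1), being hyperK\"ahler forces $g_C$ to be Ricci-flat, and the standard formula for a metric cone, $\Ric_{g_C}(X,Y) = \Ric_g(X,Y) - (\dim\cS - 1)\, g(X,Y)$ for $X,Y$ tangent to the cross-section $\{r=1\} \cong \cS$, then forces $\Ric_g = (4n+2)g$. The scalar curvature is thus $(4n+3)(4n+2) = 2(2n+1)(4n+3)$.

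For (2) and the first half of (3), each $\xi^a$ is Killing, so its flow preserves $g$ and in particular the transverse metric of the foliation $\cF$; this is the definition of a Riemannian foliation. To show each leaf is totally geodesic, I would feed the Sasakian identity $(\nabla_X \Xi^a)(Y) = \eta^a(Y)\, X - g(X,Y)\,\xi^a$ with $X,Y \in \{\xi^1,\xi^2,\xi^3\}$ into the relations $\eta^a(\xi^b) = \delta^{ab}$ and \eqref{bs.2a}, to derive a closed expression of the form $\nabla_{\xi^a}\xi^b = \sum_c \epsilon^{abc}\,\xi^c$. Since this stays tangent to the leaf, the second fundamental form vanishes identically. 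Inserting the same formula into the Gauss equation then yields intrinsic sectional curvature identically equal to $1$.

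For (4) and the quaternionic-K\"ahler statement in (3), the commutation relations \eqref{bs.2a} identify $\mathrm{span}(\xi^a)$ with $\mathfrak{sp}(1)$, and completeness of the $\xi^a$ promotes this to a true $\Sp(1)$-action on $\cS$. Orthonormality of the $\xi^a$ forces the action to be locally free, so $Q$ is a smooth orbifold and $\pi: \cS \to Q$ is a principal orbibundle. The kernel of the action is a normal subgroup of $\Sp(1)$ and therefore either trivial or the center $\bbZ_2$, yielding effective structure group $\Sp(1)$ or $\SO(3)$. To endow $Q$ with a quaternionic-K\"ahler structure, the cleanest route is through the cone: the $\Sp(1)$-action on $\cS$ extends radially to $C(\cS)$ and rotates the three hyperK\"ahler complex structures, so the hyperK\"ahler quotient of $C(\cS)$ at level zero (for the natural moment maps $\frac{1}{2}r^2\eta^a$) is a metric cone with holonomy in $\Sp(n)\Sp(1)$, displaying $Q$ as its quaternionic-K\"ahler cross-section.

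The most delicate step will be the numerical bookkeeping: pinning down the scalar curvature of $Q$ to the exact value $16n(n+2)$ requires carefully tracking the O'Neill tensors of the Riemannian submersion $\pi$, using the explicit formula for $\nabla_{\xi^a}\xi^b$ obtained above, against the standard relation between the curvatures of a quaternionic-K\"ahler cone and its base. Once this computation is calibrated, the remaining pieces assemble by standard arguments in Riemannian submersion and foliation theory, following Galicki-Salamon and the exposition in Boyer-Galicki cited before the theorem.
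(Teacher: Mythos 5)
The paper itself does not prove Theorem~\ref{bs.3}; it records it as a well-known result and cites \cite[Theorem~1.3]{Galicki-Salamon1996} and \cite[Proposition~13.3.11, Theorem~13.3.13]{Boyer-Galicki}, so your attempt is being measured against the standard literature proof. Your treatment of (1) via the cone formula $\Ric_{g_C}=\Ric_g-(\dim\cS-1)g$ on the cross-section, of (2) via the fact that the foliation is spanned by Killing fields, of the totally geodesic leaves via $\nabla_{\xi^a}\xi^b=\epsilon^{abc}\xi^c$ together with the Gauss equation, and of (4) via integrating the $\mathfrak{sp}(1)$ of complete Killing fields to a locally free $\Sp(1)$-action with kernel $\{1\}$ or $\bbZ_2$, is all in line with the standard arguments (modulo the normalization of \eqref{bs.2a}: torsion-freeness together with $\nabla_{\xi^a}\xi^b=\epsilon^{abc}\xi^c$ forces $[\xi^a,\xi^b]=2\epsilon^{abc}\xi^c$, the usual convention, so the leaf curvature is $1$ with that bookkeeping).

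The genuine gap is your mechanism for putting the quaternionic-K\"ahler structure on $Q$. The $\Sp(1)$-action on the hyperK\"ahler cone $C(\cS)$ \emph{rotates} the three complex structures rather than preserving each one, so it is not tri-Hamiltonian: there is no hyperK\"ahler moment map for it, and ``$\tfrac12 r^2\eta^a$'' is a $1$-form, not a moment map (the only genuine moment map in sight is $\tfrac12 r^2$ for $\xi^a$ with respect to $\omega^a$, whose zero level set is empty in the open cone). The dimension count also fails: a hyperK\"ahler quotient by $\Sp(1)$ would cut $4n+4$ down to $4n-2$, whereas a cone over $Q$ has dimension $4n+1$. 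Finally, a metric cone with holonomy in $\Sp(n)\Sp(1)$ is Ricci-flat, hence cannot have the positive quaternionic-K\"ahler manifold $Q$ (Einstein constant $4(n+2)$) as cross-section, and $4n+1$ is not even a multiple of $4$. The correct cone-based route is Swann's correspondence: $Q=C(\cS)/(\bbR^+\cdot\Sp(1))$ as an ordinary quotient, with the rank-$3$ bundle spanned by $I_1,I_2,I_3$ being $\Sp(1)$-invariant and descending to the quaternionic structure on the horizontal distribution; alternatively one argues directly on $\pi:\cS\to Q$ with O'Neill's formulas, using $A_{X}Y=-\sum_a g(\Phi^a X,Y)\,\xi^a$ with $\Phi^a=\nabla\xi^a$, which also supplies the deferred curvature computation: $\Ric_Q=\Ric_\cS+2|A|^2=(4n+2)+6=4(n+2)$ on unit horizontal vectors, giving $\operatorname{scal}_Q=16n(n+2)$. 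As written, the step producing the quaternionic-K\"ahler structure would not survive scrutiny, and the scalar curvature claim is only promised, not established.
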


In particular, a $3$-Sasakian manifold is Einstein with positive Einstein constant $\dim \cS-1$.  By Myers's theorem, a complete $3$-Sasakian manifold is therefore compact and has finite fundamental group.  This implies that its first Betti number vanishes.  More generally, it was shown by Galicki and Salamon \cite[Theorem~A]{Galicki-Salamon1996} that the odd Betti numbers $b_{2k+1}$ of $\cS$ vanish for $0\le k\le \frac{\dim \cS-3}4$.  

Motivated by the study of quiver varieties, where the hyperK\"ahler cones showing up are typically singular, we want to extend and refine this vanishing result to singular $3$-Sasakian metrics.  We will concentrate our effort on the case where the singular metric is an exact wedge metric.  
\begin{definition}
An exact wedge metric $g_w$ on a manifold with fibered corners $\cS$ is \textbf{$3$-Sasakian} if $(\cS\setminus\pa\cS,g_w)$ is $3$-Sasakian with Killing vector fields $\xi^1,\xi^2$ and $\xi^3$ extending to complete wedge vector fields in $\CI(\cS;{}^{w}T\cS)$  generating an action of $\Sp(1)$ such that for each $H\in \cM_1(\cS)$:
\begin{enumerate}
\item If $a_e: {}^{e}T\cS\to T\cS$ denotes the anchor map of the edge cotangent bundle, then for $a\in\{1,2,3\}$, $a_e(v\xi^a)|_H=0$ and $\xi^a$ descends to a wedge vector field $\xi^a_{S_H}\in \CI(S_H;{}^{w}TS_H)$ on the base $S_H$ of $\phi_H:H\to S_H$ making the exact wedge metric $g_{S_H,w}$ on $S_H$ induced by $g_w$ a $3$-Sasakian metric with Killing vector fields $\xi^1_{S_H},\xi^2_{S_H}$ and $\xi^3_{S_H}$;
\item For each $s\in S_H$, the exact wedge metric $g_{w,s}$ on the manifold with fibered corners $\phi_H^{-1}(s)$ induced by $g_w$ is such that 
$$
 dx_H^2 + x_H^2 g_{w,s}
$$
is a hyperK\"ahler cone making $g_{w,s}$ an exact wedge $3$-Sasakian metric.  
\end{enumerate}
\label{bs.4}\end{definition} 
\begin{remark}
Since the fibers and the base of $\phi_H:H\to S_H$ have depth lower than $\cS$, the definition above is not circular, namely proceeding by induction on the depth of $\cS$, we can assume that the notion of exact wedge $3$-Sasakian  metric is well-defined on manifolds with fibered corners of lower depth.
\end{remark}
Models at infinity of quiver varieties yield natural examples of exact wedge $3$-Sasakian metrics.
\begin{example}
Let $\mathfrak{M}_{\zeta}$ be a quiver variety as in Corollary~\ref{cqac.21}.  Then the model wedge exact metric $g_w$ in \eqref{pt.1} for the maximal hypersurface of the $\QAC$ compactification of $\mathfrak{M}_{\zeta}$ is an exact wedge $3$-Sasakian metric. Indeed, by Corollary~\ref{cqac.21}, the cone metric $dr^2+r^2g_w$ is hyperK\"ahler since it corresponds to the singular quiver variety $\mathfrak{M}_{0}$.  By the construction leading to Theorem~\ref{cqac.14} and Corollary~\ref{cqac.21}, condition (2) of Definition~\ref{bs.4} holds with the hyperK\"ahler cone $\hat{\mu}^{-1}_m(0)/G_m$ for $m\in \Sigma_{H_i}$ representing a point in the base $\Sigma_{H_i}/G$ of a boundary hypersurface $H_i\cap\widetilde{\mu^{-1}(\zeta)}/G$ of $\widetilde{\mu^{-1}(\zeta)}/G$.  On the other hand, condition (1) in Definition~\ref{bs.4} follows from a result of Dancer-Swann \cite{Dancer-Swann} (see also \cite[Theorem~1.1]{Mayrand}) applied to the hyperK\"ahler cone $\mathfrak{M}_0$.  This shows at the same time that exact wedge metrics of \eqref{pt.1} for the other boundary hypersurfaces of the $\QAC$ compactification of $\mathfrak{M}_{\zeta}$ are also exact wedge $3$-Sasakian manifolds.
\label{ew3S}\end{example}

Theorem~\ref{bs.3} naturally extends to exact wedge $3$-Sasakian  metrics.  Indeed, by Definition~\ref{bs.4}, the vector fields $\xi^1$, $\xi^2$ and $\xi^3$ are complete wedge vector fields on $\cS$, \ie they are also smooth vector fields on $\cS$ and their flows exist for all time on $\cS$, so generate a locally free action of $\Sp(1)$ on $\cS$ such that for each $H\in \cM_1(\cS)$, there is a corresponding action on $S_H$ making the map $\phi_H: H\to S_H$ $\Sp(1)$-equivariant.    The $\Sp(1)$-action also descends to a $\Sp(1)$-action on the smoothly stratified space $\hat{\cS}$ associated to $\cS$.   Since the action is locally free and $\Sp(1)$ is compact, the foliation induced by the orbits of this action is automatically quasi-regular, that is, the quotient $\cS/\Sp(1)$ is a orbifold with corners in the sense of \cite{CDR}.  Notice that
 $\hat{Q}= \hat{\cS}/\Sp(1)$ is naturally a smoothly stratified space with strata coming from those of $\cS$ and orbifold singularities created by taking the quotient of the $\Sp(1)$-action.  More precisely, by a result of Tanno \cite{Tanno}, see also \cite[Proposition~13.3.11]{Boyer-Galicki}, if the $\Sp(1)$ action is nowhere free, then the smallest conjugacy class of the stabilizer groups is the one of $\bbZ_2\subset \Sp(1)$.  Thus, setting 
$$
          G= \left\{ \begin{array}{ll} \SO(3)=\Sp(1)/\bbZ_2, & \mbox{the $\Sp(1)$-action is nowhere free}, \\
              \Sp(1), & \mbox{otherwise},  \end{array} \right.
$$
the stratification on $\cS$ induced by the conjugacy classes of stabilizer subgroups of the $\Sp(1)$-action is given by strata of the form
$$
    s_I= \{p\in \cS \; | \; G_p\in I\}
$$
for I a conjugacy class of subgroups in $G$ and $G_p\subset G$ the stabilizer group of $p$ in $G$.   Let $\overline{s}_{I}$ be the closure $s_I$ in $\cS$.  To resolve the $G$-action  on $\cS$ into a free action, we could as in \cite{AM2011} blow up the closed strata $\overline{s}_I$ in an order compatible with the partial order on the strata given by
$$
\begin{aligned}
       s_I<s_J \; &\Longleftrightarrow \; s_I \subsetneq \overline{s}_J \\
                        & \Longleftrightarrow \; \mbox{for $K\in J$, there exists $L\in I$ such that $K\subset L$.} 
\end{aligned}       
$$
However, to describe $\hat{\cS}/\Sp(1)=\hat{\cS}/G$ as a smoothly stratified space, each stratum $s_I$ needs to be decomposed as a disjoint union
$$
     s_I= s_{I,\cS} \sqcup \left( \bigsqcup_{H\in \cM_1(\cS)} s_{I,H} \right),
$$
where 
$$
  s_{I,H}= s_I\cap \left( H\setminus \left(\bigcup_{L<H} L\cap H \right) \right)
$$
for $H\in \cM_1(\cS)$ and where $s_{I,\cS}= s_I\setminus (s_I\cap \pa \cS)$.  Proceeding lexicographically, there is a partial order on this refined decomposition given by
\begin{equation}
  s_{I,H}<s_{J,L} \; \Longleftrightarrow \; I<J, \quad \mbox{or $I=J$ and $H<L$}, 
\label{bs.5}\end{equation}
where we used the convention that $H<\cS$ for all $H\in \cM_1(\cS)$ when $L=\cS$.  If $\overline{s}_{I,H}$ denotes the closure of $s_{I,H}$, then we can resolve the $G$-action into a free action on the space
\begin{equation}
 Y:= [\cS; \{\overline{s}_{I,H}\}, I\in\cI\setminus\{I_{\Id}\}, \; H\in \cM_1(\cS)\cup \{\cS\}],
\label{bs.6}\end{equation}
obtained from $\cS$ by blowing up the $\overline{s}_{I,H}$ in an order compatible with the partial order described above,
where $\cI$ is the set of conjugacy classes of subgroups of $G$ and $I_{\Id}$ is the conjugacy class corresponding to the trivial subgroup $\{\Id\}$.  One can readily check that the quotient 
$$
        Q=Y/G
$$
is naturally a manifold with fibered corners with associated smoothly stratified space $\hat{Q}=\hat{\cS}/G$.  This yields the following generalization of Theorem~\ref{bs.3}.

\begin{theorem}
Let $g_{w}$ be an exact wedge $3$-Sasakian  metric on a manifold with fibered corners $\cS$ of dimension $4n+3$.  Then:
\begin{enumerate}
\item $g_w$ is Einstein with scalar curvature $2(2n+1)(4n+3)$;
\item The foliation $\cF$ generated by $\xi^1,\xi^2$ and $\xi^3$ is Riemannian with respect to the metric $g_w$ on $\cS\setminus \pa \cS$ and with respect to $g_{S_H,w}$ on $S_{H\setminus}\pa S_H$ for each $H\in \cM_1(\cS)$;
\item Each leaf is totally geodesic and of constant curvature $1$, while on the quotient $Q=Y/ G$, the metric $g_w$ induces a quaternionic-K\"ahler exact wedge metric of scalar curvature $16n(n+2)$;
\item The natural projection $\pi: \cS\to \cS/\Sp(1)$ is a principal orbibundle with group $\Sp(1)$ or $\SO(3)$.  
\end{enumerate}
\label{bs.7}\end{theorem}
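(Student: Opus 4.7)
The plan is to proceed by induction on the depth of $\cS$ as a manifold with fibered corners. The base case, $\cS$ closed (depth zero), is the classical Theorem~\ref{bs.3}, since an exact wedge metric on a closed manifold is smooth and the Killing vector fields are assumed complete. For the inductive step, I would separate the statements that are purely local on the open dense stratum from those that involve the resolution of the $\Sp(1)$-action and the fibered-corner structure of the quotient.

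On $\cS\setminus\pa\cS$, the metric $g_w$ is a smooth $3$-Sasakian metric in the classical sense, so the Einstein property and scalar curvature $2(2n+1)(4n+3)$ in (1), the Riemannian property of $\cF$ in the interior part of (2), and the total geodesy and constant curvature $1$ of the leaves in (3) all follow directly from Theorem~\ref{bs.3}: none of those arguments use completeness of the ambient metric, only the defining identity of Definition~\ref{bs.1}, the bracket relations \eqref{bs.2a}, and the Ricci-flat hyperK\"ahler cone. For the Riemannian property of the foliation on $S_H\setminus\pa S_H$ in (2), I would invoke condition (1) of Definition~\ref{bs.4}, which makes $(S_H,g_{S_H,w},\xi^a_{S_H})$ itself an exact wedge $3$-Sasakian manifold of strictly lower depth, and then apply the inductive hypothesis.

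For (4) and the quotient part of (3), the plan is to exploit the resolution $Y$ constructed in \eqref{bs.6}. Since the $\Sp(1)$-action is locally free and preserves the iterated fibration structure, the closures $\overline{s}_{I,H}$ can be blown up in the order \eqref{bs.5} to produce a manifold with fibered corners $Y$ on which $G\in\{\Sp(1),\SO(3)\}$ -- with the choice dictated by Tanno's result on the smallest isotropy subgroup -- acts freely; thus $Q=Y/G$ is naturally a manifold with fibered corners and $\pi$ is a principal orbibundle, which establishes (4). To upgrade the interior quaternionic-K\"ahler metric on $Q\setminus\pa Q$ obtained by the classical $\Sp(1)$-reduction (which yields scalar curvature $16n(n+2)$ via the standard relation between $3$-Sasakian and QK scalar curvatures) to an exact wedge QK metric on $Q$, I would work in the product-type normal form of $g_w$ near each boundary hypersurface $H$ of $\cS$, choose the connection on $\phi_H:H\to S_H$ to be $\Sp(1)$-equivariant by averaging, perform the $\Sp(1)$-reduction fiberwise and on the base separately, and invoke condition (2) of Definition~\ref{bs.4} together with the inductive hypothesis on the fiberwise metric to conclude that the quotient is in product-type wedge QK form modulo a term of order $x_H$.

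The principal technical obstacle will be precisely this last step: verifying that the error terms produced by the $\Sp(1)$-reduction of the product-type normal form of $g_w$ near each boundary hypersurface are compatible with the exact wedge condition on $Q$. This requires carefully tracking how the horizontal part, the fiberwise $3$-Sasakian wedge part, and the base $3$-Sasakian wedge part of $g_w$ each descend under the $\Sp(1)$-action; the inductive hypothesis ensures that the fiberwise and base pieces pass to exact wedge QK metrics, while $\Sp(1)$-equivariance of the chosen connection and the compatibility of the total boundary defining function with the quotient guarantee that the resulting expansion on $Q$ is of exact wedge type.
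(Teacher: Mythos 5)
Your proposal is correct and follows essentially the same route as the paper, which states Theorem~\ref{bs.7} without a separate proof as a direct consequence of the preceding discussion: the classical Theorem~\ref{bs.3} applied on $\cS\setminus\pa\cS$ (where none of the arguments need completeness), induction on depth via conditions (1) and (2) of Definition~\ref{bs.4} for the boundary statements, and Tanno's result together with the resolution $Y$ of \eqref{bs.6} for the quotient $Q=Y/G$ and the orbibundle statement. If anything, you are more explicit than the paper about the one genuinely delicate point, namely that the $\Sp(1)$-reduction of the product-type normal form near each $H\in\cM_1(\cS)$ yields an \emph{exact} wedge quaternionic-K\"ahler metric on $Q$, which the paper simply asserts.
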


Since the fibers of the fiber bundles of the iterated fibration structure of $\cS$ admit exact wedge $3$-Sasakian metrics, they are odd dimensional.  By Remark~\ref{l2i.15}, this means that Theorem~\ref{l2i.14} holds for $g_w$ on $\cS$ for any flat Euclidean vector bundle $E$.  This is also the case when $g_w$ is seen as a wedge metric on $Y\setminus \pa Y$ as the next lemma shows.
  \begin{lemma}
  Let $Y$ be the manifold with fibered corners of \eqref{bs.6}.  Then the fibers of the fiber bundles of the iterated fibration structure of $Y$ are all odd dimensional.
  \label{inv.1}\end{lemma}
 \begin{proof}
 For $H\in\cM_1(Y)$ corresponding to the lift of a boundary hypersurface of $\cS$ to $Y$, the dimension of the fibers of the associated fiber bundle is odd since they admit an exact wedge $3$-Sasakian metric.  For $H\in \cM_1(Y)$ coming from the blow-up of $s_{I,\cS}$, notice that $s_{I,\cS}$ is of dimension $4k+3$ for some $k$ since the corresponding stratum on the quotient $\cS/\Sp(1)$ is of codimension $4$, the orbifold singularities being compatible with the quaternionic-K\"ahler structure.  Hence, since $\dim H$ is even, the dimension of the fibers of the associated fiber bundle must be odd.  Finally, if $H\in\cM_1(Y)$ is a boundary hypersurface associated to $s_{I,H'}$ for some $H'\in \cM_1(S)$ and $I$ a conjugacy class of subgroups of $G$, then the dimension of the fibers of the associated fiber bundle is $f_{H'}+f_I+1$, where $f_{H'}$ is the dimension of the fibers of the fiber bundle associated to $H'\in \cM_1(S)$ and $f_I$ is the dimension of the fibers of the fiber bundle associated to the boundary hypersurface associated to $s_{I,\cS}$.  By the discussion above, $f_{H'}$ and $f_I$ are odd, so $f_{H'}+f_I+1$ is odd as well.
 \end{proof} 
  
We will need to work with the metric $g_w$ both as a wedge metric on $\cS$ and $Y$ and it will be important that the various Hodge cohomology spaces are the same.
\begin{proposition}
Let $g_w$ be an exact wedge $3$-Sasakian metric as in Theorem~\ref{bs.7}. Then for any flat Euclidean vector bundle $E\to \cS$, the conclusions of Theorem~\ref{l2i.14} hold for $g_w$ seen as a wedge metric on $\cS$ or $Y$.  Moreover, the Hodge cohomology groups in \eqref{l2i.14c} are the same whether $g_w$ is seen as a wedge metric on $\cS$ or $Y$. 
\label{inv.2}\end{proposition}
\begin{proof}
The first assertion follows from the discussion above and Remark~\ref{l2i.15}.  For the second assertion, let $\hat{g}_w$ be a wedge metric on $Y$ such that the associated Hodge-deRham operator $\hat{\eth}_w$ is essentially self-adjoint and Fredholm.  Recall from \cite[Proposition~5.4 and Theorem~1.1]{ALMP2012} that such a metric can be obtained from $g_w$ by scaling the wedge metrics in the fibers of the fiber bundles of the iterated fibration structure to ensure that the corresponding Hodge-deRham operator has no eigenvalue in $(-1,1)\setminus\{0\}$ \cite[Assumption (5.4) a)]{ALMP2012}.  If $H_1,\ldots,H_\ell$ is an exhaustive list of the boundary hypersurfaces of $Y$ compatible with the partial order coming from the iterated fibration structure, then one has to scale the metrics in the fibers of $H_{\ell}$ ,then those in the fibers of $H_{\ell-1}$ and so on until we reach $H_1$ to scale the metrics in its fibers.  However, since $g_w$ is smooth on the interior of $\cS$, the fibers of $H_i$ for $H_i\in\cM_{1}(Y)$ associated to the blow-up of $s_{I,\cS}$ are spheres of dimension at least $3$ (possibly blown-up at some submanifolds) with $g_w$ inducing on such a fiber the standard round metric.   Hence, as in Remark~\ref{l2i.15b}, by the Gallot-Meyer result \cite{GM1975}, there is no need to scale the metrics in this case, so this means we may only need to scale the fiber metrics for boundary hypersurfaces corresponding to the lift of a boundary hypersurface of $\cS$ or coming from the blow-up of $s_{I,H}$ for some boundary hypersurface $H\in\cM_1(\cS)$.  

Hence, without loss of generality, we can assume that $\hat{g}_w$ is smooth on $\cS\setminus \pa \cS$.  Let $(\hat{\eth}_w)^{\cS}_{\min}$ and $(\hat{\eth}_w)^{\cS}_{\max}$ be the minimal and maximal extensions of $\hat{\eth}_w$ seen as an operator on $\cS\setminus \pa \cS$.  Similarly, let $(\hat{\eth}_w)^{Y}_{\min}$ and $(\hat{\eth}_w)^{Y}_{\max}$ be the minimal and maximal extensions of $\hat{\eth}_w$ seen as an operator on $Y\setminus \pa Y$.  From the definition of the minimal and maximal extensions, we have the sequence of inclusions
$$
  \cD((\hat{\eth}_w)^{Y}_{\min}) \subset \cD((\hat{\eth}_w)^{\cS}_{\min}) \subset \cD((\hat{\eth}_w)^{\cS}_{\max}) \subset \cD((\hat{\eth}_w)^{Y}_{\max}).  
$$
Since $\hat{\eth}_w$ is essentially self-adjoint on $Y\setminus \pa Y$, this means that all these domains are equal and
$$
     \cH^*_{\min}(Y\setminus \pa Y;E,\hat{g}_w)= \cH^*_{\min}(\cS\setminus \pa \cS;E,\hat{g}_w)= \cH^*_{\max}(\cS\setminus \pa\cS;E,\hat{g}_w)=\cH^*_{\max}(Y\setminus \pa Y;E,\hat{g}_w).
$$
Since the dimension of these spaces only depends on the quasi-isometric class of the metric and since 
$$
\cH^*_{\max}(\cS\setminus \pa \cS;E,g_w)\subset \cH^*_{\max}(Y\setminus \pa Y;E,g_w),
$$
this implies that
$$
 \cH^*_{\max}(\cS\setminus \pa \cS;E,g_w)= \cH^*_{\max}(Y\setminus \pa Y;E,g_w).
$$
The result then follows from this identification and the conclusion of Theorem~\ref{l2i.14} for $g_w$ seen as a metric on $Y$ and $\cS$.
\end{proof}

The previous result allows us to work with $Y$ to draw conclusions on the Hodge cohomology of the exact wedge $3$-Sasakian metric $g_w$ on $\cS$.  On $Y$, the advantage is that the action of $G$ is free and induces a principal $G$-bundle $\pi:Y\to Q$.  On next goal is to show that $L^2$-harmonic forms are invariant with respect to this $G$-action.  For this assertion to make sense, we need to assume that the flat Euclidean vector bundle $E\to \cS$ lifted to $Y$ admits an $\Sp(1)$-action preserving the Euclidean and flat structures and making the bundle projection $E\to Y$ $\Sp(1)$-equivariant.  We will require slightly more.
\begin{definition}
The flat Euclidean vector bundle $E\to \cS$ is \textbf{$\Sp(1)$-equivariant} if it admits an $\Sp(1)$-action preserving the Euclidean and flat structures and making the bundle projection $E\to \cS$ $\Sp(1)$-equivariant.   Furthermore, denoting its lift to $Y$ also by $E$, we say that it is \textbf{nicely $\Sp(1)$-equivariant} if for all $y\in Y$, 
$$
     E|_{\pi^{-1}(y)}\cong \Sp(1)\times_\Gamma \widetilde{E}_y \to \pi^{-1}(y)\cong G= \Sp(1)/\Gamma
$$ 
for some orthogonal representation $\widetilde{E}_y$ of $\Gamma$ with action of $\Sp(1)$ on $E|_{\pi^{-1}(y)}$ given by composition on the left in the first factor in $ \Sp(1)\times_\Gamma \widetilde{E}_y$, where $\Gamma=\bbZ_2$ if the $\Sp(1)$-action on $\cS$ is nowhere free and $\Gamma=\{\Id\}$ otherwise.  
\label{inv.3}\end{definition} 

If $E$ is nicely $\Sp(1)$-equivariant, let $E_{Q,y}$ be the subspace of $\widetilde{E}_y$ fixed by $\Gamma$.  This space coincides with the space of global flat sections of $E|_{\pi^{-1}(y)}$, which corresponds to the kernel of the Laplacian on $\pi^{-1}(y)$ acting on sections of $E|_{\pi^{-1}(y)}$.  As such, as $y$ varies, the subspaces $E_{Q,y}$ combine to form a flat vector bundle $E_Q$ on $Q$.  Of couse, if $E$ is nicely $\Sp(1)$-equivariant, the group $\Sp(1)$ acts on $L^2\Omega^*(Y\setminus \pa Y;E,g_w)$ and this action commutes with the Hodge Laplacian and the Hodge-deRham operator.  There is also an induced action on the minimal Hodge cohomology groups.
\begin{lemma}
Let $g_w$ be an exact wedge $3$-Sasakian metric as in Theorem~\ref{bs.7}.  If $E\to \cS$ is a nicely $\Sp(1)$-equivariant flat Euclidean vector bundle, then each harmonic form in $\cH^*_{\min}(Y\setminus\pa Y;E,g_w)$ is fixed by the action of $\Sp(1)$.   
\label{inv.4}\end{lemma}
\begin{proof}
Given $\Theta\in \mathfrak{sp}(1)$, let $\Theta_*$ be the vector field on $Y\setminus \pa Y$ corresponding to the infinitesimal action of $\Theta$.  Since $E$ is nicely $\Sp(1)$-equivariant, $E$ is locally spanned by flat orthogonal sections that are fixed by the infinitesimal action of $\Sp(1)$.  This means that the Cartan formula
\begin{equation}
   \cL_{\Theta_*}\nu = d\iota_{\Theta_*}\nu + \iota_{\Theta_*}d\nu
\label{hi.1}\end{equation}
holds for $\nu\in\Omega^*(Y\setminus\pa Y;E)$.  Now, by Theorem~\ref{l2i.14} and the identification \eqref{l2i.14b}, the result will follow provided we can show that the natural action of $\Sp(1)$ on $L^2H^*_{\min}(Y\setminus\pa Y;E,g_w)$ is trivial.  Thus, let
$$
  \nu \in \cH^k_{\min}(Y\setminus\pa Y;E,g_w)\cong L^2H^k_{\min}(Y\setminus\pa Y;E,g_w) 
$$
be given.  Since $\Sp(1)$ is connected, given $\Theta\in \mathfrak{sp}(1)$, we need to show that the flow $\Phi_t$ of $\Theta_*$ at time $t=1$ fixes the minimal $L^2$-cohomology class of $\nu$.  Now, we compute that
\begin{equation}
\begin{aligned}
\Phi_1^*\nu- \nu &= \int_0^1 (\frac{d}{dt}\Phi^*_t\nu) dt= \int_0^1 \Phi_t^*(\cL_{\Theta_*}\nu)dt \\
    &= \int_0^1 \Phi_t^*(d\iota_{\Theta_*}\nu) dt, \quad \mbox{by \eqref{hi.1} and the fact $d\nu=0$,} \\
    &= d \int_0^1\Phi_t^*(\iota_{\Theta_*}\nu) dt=du, \quad \mbox{with} \; u:= \int_0^1\Phi_t^*(\iota_{\Theta_*}\nu) dt.
\end{aligned}
\label{hi.2}\end{equation}
Clearly, $u\in L^2\Omega^{k-1}(Y\setminus\pa Y;E,g_w)$.  On the other hand, since $\nu\in \cH^{k}_{\min}(Y\setminus\pa Y;E,g_w)$, there exists a sequence $\{\nu_j\}\subset \Omega_c^k(Y\setminus\pa Y;E)$ such that $\nu_j\to \nu$ and $d\nu_j\to d\nu$ in $L^2$.  If we set 
$$
    u_j:= \int_0^1 \Phi_t^*(\iota_{\Theta_*}\nu_j) dt,
$$
then $u_j\to u$ in $L^2$, while proceeding as in \eqref{hi.2}, we find that
$$
    du_j= (\Phi_1^*\nu_j-\nu_j)\to (\Phi_1^*\nu-\nu)=du \quad \mbox{in $L^2$}.
$$
This shows that $u\in \cD(d_{\min,k-1})$ and that $\Phi_1^*\nu$ represents in $L^2H^k_{\min}(Y\setminus\pa Y;E,g_w)$ the same cohomology class as $\nu$, that is, $\Phi_1$ fixes the minimal $L^2$-cohomology class defined by $\nu$.  
\end{proof}

\section{A vanishing in $L^2$-cohomology for wedge $3$-Sasakian metrics} \label{van.0}

In \cite{Galicki-Salamon1996}, Galicki and Salamon showed that certain cohomology groups are automatically trivial on a closed $3$-Sasakian manifold.  The goal of this section is to generalize this result to the Hodge cohomology groups of an exact wedge $3$-Sasakian metric.      We will follow essentially the same overall strategy as the one of \cite{Galicki-Salamon1996}.  We will need in particular to adapt to incomplete metrics a result of Tachibana \cite{Tachibana} stipulating that harmonic forms below middle degree on a closed Sasakian manifold are horizontal with respect to the orbits of the Reeb vector field.  This will be the occasion to give a `modern' proof of this result.  

Thus, let $g_w$ be an exact wedge $3$-Sasakian metric as in Theorem~\ref{bs.7}.  Fix $a\in \{1,2,3\}$ and set $\xi=\xi^a$.  Then $\xi$ induces a free circle action on $Y$ inducing a circle bundle 
\begin{equation}
 \nu: Y\to B
\label{van.1}\end{equation}
with $B=Y/S^1$ the quotient of this circle action.  As in Theorem~\ref{bs.7} for the quotient of the $\Sp(1)$-action, the base $B$ is naturally a manifold with fibered corners and the metric $g_w$ induces an exact wedge metric $g_B$ on $B$ making 
$$
   \nu: (Y\setminus\pa Y)\to B\setminus \pa B
$$
a Riemannian submersion.  Since the orbits of the $\Sp(1)$-action on $Y$ are never tangent to the fibers of the various fiber bundles of the iterated fibration structure of $Y$, we see by Lemma~\ref{inv.1} that the fibers of the fiber bundles of the iterated fibration structure of $B$ are all odd dimensional.  Hence, by Remark~\ref{l2i.15}, the conclusions of Theorem~\ref{l2i.14} hold for the metric $g_B$ for any flat Euclidean vector bundle on $B$.  On the other hand, by a standard result in Sasakian geometry, the metric $g_B$ is K\"ahler with complex structure induced by the endomorphism $\Xi$ in Definition~\ref{bs.1} and with K\"ahler form $d\eta$, where $\eta$ is the $1$-form dual to $\xi$. In particular, Corollary~\ref{l2i.23}  applies to the metric $g_B$.  

Let $E$ be a flat Euclidean vector bundle on $\cS$ which is nicely $\Sp(1)$-equivariant.  As for the bundle $\pi:Y\to Q$, there is a flat Euclidean vector bundle $E_B\to B$ with fiber $E_{B,b}$ above $b\in B$ corresponding to the global flat sections of $E|_{\nu^{-1}(b)}$ on $\nu^{-1}(b)$.  Let us denote by $d_B$ the exterior differential associated to $E_B$ on $B\setminus \pa B$ and denote by $d_{B,\min}$ its minimal extension with respect to the exact wedge metric $g_B$ and the bundle metric of $E_B$.  Similarly, denote by $d^*_{B,\min}$ the minimal extension of its formal adjoint.
\begin{lemma}
An element $u\in \cH^k_{\min}(Y\setminus\pa Y;E,g_w)$ takes the form
\begin{equation}
    u=\nu^*u_0+ \eta\wedge \nu^*u_1,
\label{van.2a}\end{equation}
where $u_0\in \cD(d_{B,\min,k})\cap\cD(d_{B,\min,k-1}^*)$ is such that $d_{B,\min,k-1}^*u_0=0$ and  $u_1\in \cD(d_{B,\min,k-1})\cap\cD(d^*_{B,\min,k-2})$ is such that $d_{B,\min,k-1}u_1=0$.  
\label{van.2}\end{lemma}  
\begin{proof}
By Lemma~\ref{inv.4}, the form $u$ is $\Sp(1)$-invariant, so in particular $\bbS^1$-invariant with respect to the $\bbS^1$-action generated by the Reeb vector field $\xi$.  Since the $1$-form $\eta$ is also $\bbS^1$-invariant, this means that $u$ is of the form \eqref{van.2a} with $u_i\in L^2\Omega^{k-i}(B\setminus\pa B;E_B,g_B)$.  Since $du=0$, we see that
$$
 0=du= \nu^* du_0+ d\eta\wedge \nu^*u_1-\eta\wedge \nu^*(du_1).
$$
Decomposing in terms of vertical and horizontal degrees with respect to the fiber bundle \eqref{van.1}, this  implies that
\begin{equation}
   du_1=0 \quad \mbox{and} \quad \nu^*du_0+ d\eta\wedge \nu^*u_1=0.
\label{van.3}\end{equation}
In particular, $u_1$ is a closed form.  Since $u\in \cD((d+d^*)_{\min})=\cD(d_{\min})\cap\cD(d^*_{\min})$, there is a sequence $\{v^j\}$ in $\Omega^k_c(Y\setminus\pa Y;E)$ such that $v^j\to u$ and $dv^j\to 0$ in $L^2$ as $j\to \infty$.  Averaging with respect to the $\bbS^1$-action,  we can in fact assume that the terms of the sequence $\{v^j\}$  are $\bbS^1$-invariant, in which case they must be of the form
$$
  v^j= \nu^*v^j_0 + \eta\wedge \nu^*v^j_1
$$
for sequences $\{v^j_i\}\subset \Omega^{k-i}_{c}(B\setminus \pa B;E_B)$.  Since $v^j\to u$ in $L^2$, we must have that $v^j_i\to u_i$ in $L^2\Omega^{k-i}(B\setminus\pa B;E_B;g_B)$ for $i\in\{0,1\}$.  Since 
$$
  dv^j= \nu^*dv^j_0+ d\eta\wedge \nu^*v^j_1 -\eta\wedge \nu^*dv^j_1,
$$ 
we deduce from the fact that $dv^j\to 0$ in $L^2$ that
$$
 dv^j_{1}\to 0 \quad \mbox{and} \quad dv^j_0\to -d\eta\wedge u_1= du_0 \quad \mbox{in $L^2$},
$$ 
showing  that $u_i\in \cD(d_{B,\min,k-i})$ for $i\in\{0,1\}$ as claimed.  Similarly, from $d^*u=0$, we deduce that $d^*u_0=0$ and  $u_i\in \cD(d^*_{B,\min,k-1-i})$ for $i\in\{0,1\}.$ 
\end{proof}
This yields the following singular version of the theorem of Tachibana \cite[Theorem~7.1]{Tachibana}. 
\begin{theorem}
Let $g_w$ be an exact wedge $3$-Sasakian metric on a manifold with fibered corners $\cS$ of dimension $4n+3$ as in Theorem~\ref{bs.7}.  Let $E\to \cS$ be a nicely $\Sp(1)$-equivariant flat Euclidean vector bundle on $\cS$.   For $a\in\{1,2,3\}$ fixed let $\nu: Y\to B$ be the circle bundle generated by the vector field $\xi=\xi^a$.  Then for  $k\le 2n+1$, 
the pull-back by $\nu$ induces an isomorphism
\begin{equation}
\nu^*: \cH^k_{\min}(B\setminus\pa B;E_B,g_B)\cap \ker L^*\to \cH^k_{\min}(Y\setminus\pa Y; E, g_w) \quad \forall \ k\le 2n+1,
\label{van.4b}\end{equation}
where $L^*$ is the operator of Corollary~\ref{l2i.20}.  
\label{van.4}\end{theorem}
\begin{proof}
Let $u\in \cH^k_{\min}(Y\setminus\pa Y; E, g_w)$ be given. By Lemma~\ref{van.2}, $u$ is of the form \eqref{van.2a}.  Let us first show that the cohomology class of $u_1$ vanishes.  If $L$ is the operator defined by
$$
    Lv= d\eta\wedge v
$$
for forms on $B\setminus \pa B$, then by \eqref{van.3},
$$
      Lu_1= -du_0.
$$
By Corollary~\ref{l2i.23} and the fact $u_1$ is of degree $k-1\le 2n+1-1=2n<\frac{\dim B}2$, this means that $u_1$ defines a trivial cohomology class in $L^2H^{k-1}_{\min}(B\setminus\pa B;(E_B)_{\bbC},g_B)$.  This means there exists $v\in \cD(d_{B,\min,k-2})$ such that
$$
  u_1= dv.
$$
But then, the cohomology class represented by $u$ in $L^2H^k_{\min}(Y\setminus\pa Y,g_w,E)$ is also represented by the basic form
$$
   w:=u+ d(\eta\wedge \nu^*v)= \nu^*u_0+ d\eta\wedge \nu^*v.  
$$
This basic form defines a cohomology class in $L^2H^k_{\min}(B\setminus\pa B;E_B,g_B)$ depending on the choice of $v$.  Indeed, adding to $v$ a closed form representing a cohomology class $\psi\in  L^2H^{k-2}_{\min}(B\setminus\pa B; E_{B},g_B)$ changes the cohomology class of $w$ by adding $L\psi$.   In fact, changing $v$ if necessary we can suppose that the cohomology class of $w$ is primitive in terms of the Lefschetz decomposition \eqref{l2i.23c}.  Indeed, if 
$$
   w= w_0+ Lw_2
$$
for closed forms $w_i\in \cD(d_{B,\min,k-2i})$ with $w_0$ representing a primitive cohomology class in $L^2H^{k-2}_{\min}(B\setminus\pa B; E_{B},g_B)$, then replacing $v$ by $v-w_2$ yields the basic form 
$$
     w-d\eta\wedge w_2= w_0.  
$$
Thus, let us choose $v$ so that $w$ defines a primitive cohomology class in $L^2H^k_{\min}(B\setminus\pa B;E_B,g_B)$.  Then its harmonic representative $\hat{w}\in \cH^k_{\min}(B\setminus \pa B;E_B,g_B)$ is such that
$$
   L^*\hat{w}=0.
$$
This ensures that its lift $\nu^*\hat{w}$ to $Y\setminus \pa Y$ is also harmonic, since  $d\nu^*\hat{w}= \nu^*(d\hat{w})=0$ and using the convention that $\eta\wedge (d\eta)^{2n+2}$ is the volume form of $Y\setminus\pa Y$,
$$
\begin{aligned}
       d^*(\nu^*\hat{w})&= -*d*(\nu^*(\hat{w}))= -(-1)^k*d(\eta\wedge (\nu^*(*_B\hat{w})))= -(-1)^k*((d\eta)\wedge \nu^*(*_B\hat{w}) -\eta\wedge \nu^*(d*_B\hat{w})) \\
       &= -(-1)^k*((d\eta)\wedge \nu^*(*_Bw)), \quad \mbox{since $\hat{w}$ is harmonic}, \\
       &=  -\eta\wedge \nu^*(*_B(d\eta\wedge *_B\hat{w}))= (-1)^{k+1}\eta\wedge (\nu^*(L^*\hat{w})) \\
       &= 0, \quad \mbox{since $\hat{w}$ is primitive}.
\end{aligned}       
$$
In particular, this argument shows that the map \eqref{van.4b} is well-defined and clearly injective.
Now, since $\nu^{*}\hat{w}$ and $u$ in $\cH^k_{\min}(Y\setminus\pa Y;E,g_w)$ are two harmonic forms representing the same cohomology class in $L^2H^k_{\min}(Y\setminus\pa Y;E,g_w)$, they must in fact be equal by Theorem~\ref{l2i.14}, showing that the map \eqref{van.4b} is also surjective.  

\end{proof}

\begin{remark}
The proof of Theorem~\ref{van.4} can also be adapted to give a new proof of the original result of Tachibana for closed Sasakian manifolds with $E$ trivial, even in the irregular case.  Indeed, it suffices to replace $L^2H^*_{\min}(B\setminus\pa B;E_B,g_B)$ by the basic cohomology ring of the foliation generated by the Reeb vector field $\xi$ and use the transverse Hodge theorem \cite{EKA1986,EKA1990} (see also \cite[\S~7.2]{Boyer-Galicki}) and the transverse Hard Lefschetz theorem of \cite[\S~3.4.7]{EKA1990} (see also \cite[Theorem~7.2.9]{Boyer-Galicki}).
\label{van.5}\end{remark}

Since the endomorphism $\Xi$ of Definition~\ref{bs.1} corresponds to the horizontal lift of the complex structure on $B\setminus \pa B$, we can deduce the following result from Theorem~\ref{van.4} and Proposition~\ref{l2i.19}.
\begin{corollary}
For $k\le 2n+1$, the endomorphism $\Xi$ induces a well-defined map
$$
   \Xi: \cH^k_{\min}(Y\setminus\pa Y;E,g_w)\to \cH^k_{\min}(Y\setminus\pa Y;E,g_w)
$$
defined by
$$
      (\Xi u)(X_1,\ldots,X_k)= u(\Xi X_1,\ldots,\Xi X_k).
$$
\label{van.6}\end{corollary}

We can also consider the Tachibana operator $T_{\Xi}$ on $k$-forms given by
$$
 ( T_{\Xi}u)(X_1,\ldots,X_k)= \sum_{i=1}^k u(X_1,\ldots,X_{i-1},\Xi X_i,X_{i+1},\ldots,X_k).
$$
Using Theorem~\ref{van.4} and Proposition~\ref{l2i.19}, we obtain the following non-compact version of \cite[Theorem~8.1]{Tachibana}.
\begin{corollary}
For $k\le 2n+1$, the Tachibana operator induces a well-defined map
$$
T_{ \Xi}: \cH^k_{\min}(Y\setminus\pa Y;E,g_w)\to \cH^k_{\min}(Y\setminus\pa Y;E,g_w).
$$
\label{van.7}\end{corollary}
\begin{proof}
It suffices to notice that for a form of pure bidegree  $(p,q)$ in the Hodge decomposition of Proposition~\ref{l2i.19}, the Tachibana operator $T_{\Xi}$ acts by multiplication by $\sqrt{-1}(p-q)$. 
\end{proof}
\begin{remark}
This proof can be adapted to give a different proof of the original result of Tachibana \cite[Theorem~8.1]{Tachibana}.  It suffices to replace Proposition~\ref{l2i.19} by the transverse Hodge decomposition of \cite[Théorème~3.3.3]{EKA1990} (see also \cite[Theorem~7.2.6]{Boyer-Galicki}).
\label{van.7b}\end{remark}

Since $g_w$ is an exact wedge $3$-Sasakian metric, we can apply the previous results with $\xi\in \{\xi^1,\xi^2,\xi^3\}$.  In particular, if we let $\Xi^a$ denote the endomorphism associated to $\xi^a$, then by Corollary~\ref{van.6}, it induces a natural map
\begin{equation}
   \Xi^a: \cH^k_{\min}(Y\setminus\pa Y;E,g_w)\to \cH^k_{\min}(Y\setminus\pa Y;E,g_w)
\label{van.8}\end{equation}
for $k\le 2n+1$.  Since by \cite[(13)]{Galicki-Salamon1996},  the endomorphisms $\Xi^1,\Xi^2$ and $\Xi^3$ satisfy the  relations
\begin{equation}
    \Xi^a\circ\Xi^b= (-\delta^{ab})^k\Id+ \sum_c(\epsilon^{abc})^k\Xi^c
\label{van.9}\end{equation}
when acting on $\cH^k_{\min}(Y\setminus\pa Y;E,g_w)$ for $k\le 2n+1$, 
this yields the following generalization of the vanishing theorem of Galicki and Salamon \cite{Galicki-Salamon1996}.

\begin{theorem}
Let $g_w$ be an exact wedge Sasakian metric on a compact manifold with fibered corners $\cS$ of dimension $4n+3$.  Let $E\to \cS$ be a nicely $\Sp(1)$-equivariant flat Euclidean vector bundle on $\cS$.  Then for $k\le 2n+1$, $u\in \cH^k_{\min}(\cS\setminus\pa \cS;E,g_w)$ is $\Sp(1)$-invariant with $u\equiv 0$ if $k$ is odd and $\Xi^a u=u$ for $a\in\{1,2,3\}$ if $k$ is even.  
\label{bs.22}\end{theorem}
\begin{proof}
By  Proposition~\ref{inv.2}, we can assume  $u\in \cH^{k}_{\min}(Y\setminus\pa Y;E,g_w)$. The $\Sp(1)$-invariance is then a consequence of Lemma~\ref{inv.4}. Given \eqref{van.8} and \eqref{van.9}, we can from that point proceed essentially as in the proof of \cite{Galicki-Salamon1996}.  Let us recall the argument for the benefit of the reader.  

As observed by Galicki and Salamon, it suffices to show that $\Xi^1u=\Xi^2u$, for then the result follows from \eqref{van.9} and symmetry between the indices $1,2,3$.  Now, the proof that  $\Xi^1u=\Xi^2u$ relies on the $\Sp(1)$-invariance of $u$.  Indeed, as in the proof of \cite[Theorem~2.3]{Galicki-Salamon1996}, we may choose $h\in\Sp(1)$ such that 
$h_*\Xi^1=\Xi^2$.  Since both $u$ and $\Xi^1u$ are $\Sp(1)$-invariant, this means that
$$
\begin{aligned}
(\Xi^1u)(X_1,\ldots,X_k)&= h_*(\Xi^1u)(X_1,\ldots,X_k)= \Xi^1u((h^{-1})_*X_1,\ldots,(h^{-1})_*X_k) \\
  &=u(\Xi^1(h^{-1})_*X_1,\ldots, \Xi^1(h^{-1})_*X_k)= u((h^{-1})_*h_*\Xi^1(h^{-1})_*X_1,\ldots, (h^{-1})_*h_* \Xi^1(h^{-1})_*X_k) \\
  &= u((h^{-1})_*(h_*\Xi^1)X_1,\ldots, (h^{-1})_*(h_* \Xi^1)X_k) = h_*u(\Xi^2X_1,\ldots,\Xi^2X_k) \\
  &= u(\Xi^2X_1,\ldots,\Xi^2X_k)= (\Xi^2u)(X_1,\ldots,X_k).
\end{aligned}
$$ 

\end{proof}

\begin{remark}
By a result of Cheeger \cite[Theorem~3.4]{ALMP2012}, when $E$ is a trivial flat Euclidean vector bundle, Theorem~\ref{bs.22} implies that the lower and upper middle perversity intersection cohomology groups associated to the smoothly stratified space $\hat{\cS}$ vanish in degree $2k+1$ for $k\in\{0,\ldots,n\}$.
\label{bs.23}\end{remark}

Combining Theorems~\ref{van.4} and \ref{bs.22} also yields a vanishing in Hodge cohomology for the K\"ahler manifold $(B\setminus\pa B, g_B)$.
\begin{corollary}
Let $(B\setminus \pa B,g_B)$ be the K\"ahler manifold corresponding to the quotient of $(Y\setminus\pa Y,g_w)$ by the $\bbS^1$-action generated by some fixed choice of Reeb vector field $\xi\in\{\xi^1,\xi^2,\xi^3\}$.  Then for $k\in \{0,1,\ldots,n\}$, 
$$
   \cH^{2k+1}_{\min}(B\setminus\pa B;E_B,g_B)=\{0\}.
$$
\label{van.10}\end{corollary}
\begin{proof}
By Theorems\ref{van.4} and \ref{bs.22}, 
$$
  \cH^{2k+1}_{\min}(B\setminus\pa B;E_B,g_B)\cap \ker L^*=\{0\},
 $$
 so the result follows from the Lefschetz decomposition \eqref{l2i.23c}.
\end{proof}

For the quaternionic-K\"ahler manifold $(Q\setminus\pa Q,g_{w,QK})$ of Theorem~\ref{bs.7}, let us remark that it is also possible to obtain a vanishing theorem, but proceeding quite differently via the Weitzenb\"ock formula of Semmelmann and Weingart \cite{SW2002} (see also \cite{Homma2006}).

\begin{theorem}
Let $g_{w,QK}$ be the quaternionic-K\"ahler exact wedge metric on $Q=Y/G$ of Theorem~\ref{bs.7}.  Let $E\to Q$ be a flat Euclidean vector bundle and let  $\eth_{w,QK}$ be the Hodge-deRham operator associated to $g_{w,QK}$ and $E$. Then, for $0\le k\le n$, 
\begin{equation}
 \langle \psi, \eth^2_{w,QK}\psi\rangle_{L^2_w} \ge 2\langle \psi,\psi\rangle_{L^2_w} \quad \forall \psi\in \Omega^{2k+1}_c(Q\setminus\pa Q;E),
\label{bs.8a}\end{equation}
where $\langle\cdot,\cdot\rangle_{L^2_w}$ is the $L^2$-inner product associated to $g_{w,QK}$ and the bundle metric of $E$ and  $\Omega^q_c(Q\setminus\pa Q;E)$ is the space of compactly supported smooth $E$-valued forms on $Q\setminus \pa Q$.  In particular, for $0\le k\le n$,
$$
   \cH^{2k+1}_{\min}(Q\setminus\pa Q; E,g_{w,QK})=\{0\}.
$$
\label{bs.8}\end{theorem}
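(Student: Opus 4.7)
The plan is to reduce the inequality to a pointwise Weitzenb\"ock identity on the smooth quaternionic-K\"ahler interior of $Q$, where the result of Semmelmann--Weingart \cite{SW2002} applies directly. Since the test forms $\psi\in \Omega^{2k+1}_c(Q\setminus\pa Q;F)$ are compactly supported away from $\pa Q$, the wedge singularities along the boundary play no role in the estimate; by Theorem~\ref{bs.7}(3) the restriction of $g_{w,QK}$ to $Q\setminus\pa Q$ is a genuine quaternionic-K\"ahler (orbifold) metric of scalar curvature $16n(n+2)$, and any pointwise bound proved on the smooth locus extends across the orbifold singularities by covering the support of $\psi$ with finitely many orbifold uniformising charts and averaging over the finite isotropy groups.

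The central step is to apply the Weitzenb\"ock formula for the twisted Hodge-deRham operator, which in this setting takes the schematic form
\begin{equation*}
\eth^2_{w,QK} = \nabla^*\nabla + \cR,
\end{equation*}
with $\nabla$ the induced connection on $\Lambda^*T^*Q\otimes F$ and $\cR$ a symmetric curvature endomorphism built from the Riemann tensor of $g_{w,QK}$ and the curvature of $F$. Since $F$ is flat and Euclidean, only the first contribution survives, so $\cR$ depends solely on the quaternionic-K\"ahler curvature. The idea of \cite{SW2002} is to decompose $\Lambda^*T^*Q\otimes F$ into parallel subbundles corresponding to the irreducible summands of the exterior algebra under the holonomy representation of $\mathfrak{sp}(1)\oplus\mathfrak{sp}(n)$ and to compute the eigenvalues of $\cR$ on each summand as explicit linear functions of the scalar curvature. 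For odd degree $2k+1$ with $0\le k\le n$ and scalar curvature $16n(n+2)$, these eigenvalues are all at least $2$, giving $\cR\ge 2\,\Id$ pointwise on $\Lambda^{2k+1}T^*Q\otimes F$.

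Once this pointwise positivity is in hand, compact support of $\psi$ permits unrestricted integration by parts, and yields
\begin{equation*}
\langle \psi,\eth^2_{w,QK}\psi\rangle_{L^2_w} = \|\nabla \psi\|^2_{L^2_w} + \langle \cR\psi,\psi\rangle_{L^2_w}\ge 2\langle\psi,\psi\rangle_{L^2_w},
\end{equation*}
which is exactly the claimed inequality.

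I expect the main obstacle to be not the geometric analysis (compact support removes all boundary issues, and flatness of $F$ eliminates any twist curvature) but the representation-theoretic bookkeeping inside \cite{SW2002}: one must carefully identify, under the decomposition of $\Lambda^{2k+1}T^*Q$ into $\Sp(1)\cdot\Sp(n)$-isotypic summands, the eigenvalue of $\cR$ on each, and check that with the scalar-curvature normalisation $16n(n+2)$ coming from Theorem~\ref{bs.7} the resulting minimum is exactly $2$ throughout the range $0\le k\le n$. Once that constant is extracted, the remainder of the argument is essentially formal.
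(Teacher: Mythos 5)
Your proposal is correct and follows essentially the same route as the paper: invoke the Semmelmann--Weingart Weitzenb\"ock formula $\eth^2_{w,QK}=\nabla^*\nabla+R_{QK}$, note that flatness of $F$ and locality of the formula allow twisting by $F$, use their curvature estimate $R_{QK}\ge \kappa/(8n(n+2))=2$ on forms of degree $2k+1$ with the normalisation $\kappa=16n(n+2)$ from Theorem~\ref{bs.7}, and integrate by parts on compactly supported forms away from $\pa Q$. The only difference is cosmetic: you spell out the orbifold-chart/averaging remark and flag the representation-theoretic bookkeeping, which the paper handles simply by citing the specific lemmas of \cite{SW2002} from which the bound is extracted.
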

\begin{proof}
In \cite{SW2002}, Semmelmann and Weingart give a detailed description of the curvature term $R_{QK}$ in the Weitzenb\"ock formula 
\begin{equation}
    \eth_{w,QK}^2= \nabla^*\nabla +R_{QK}
\label{bs.9}\end{equation}
by decomposing it in terms of the irreducible representations of the holonomy group $\Sp(1)\cdot\Sp(n)$ of $g_{w,QK}$.  No flat Euclidean vector bundle was considered in \cite{SW2002}, but since the formula is local, notice that it also holds for the Hodge-deRham operator acting on $E$-valued forms.   For $0\le k\le n$, they obtain the following estimate on $R_{QK}$ acting on ($E$-valued) forms of degree $2k+1$,
\begin{equation}
R_{QK}\ge \frac{\kappa_{w,QK}}{8n(n+2)},
\label{bs.10}\end{equation}
where $\kappa_{w,QK}$ is the scalar curvature of $g_{w,QK}$.  This estimate is not explicitly written in \cite{SW2002}, but it follows from \cite[Lemma~6.2]{SW2002} combined with \cite[(19)]{SW2002}, \cite[Theorem~6.1]{SW2002} and the way \cite[Theorem~4.4]{SW2002} is used in its proof.  Since $\kappa_{w,QK}=16n(n+2)$ by Theorem~\ref{bs.7}, this means that
\begin{equation}
    R_{QK}\ge 2.
\label{bs.11}\end{equation}
The result is then a direct consequence of \eqref{bs.9} and \eqref{bs.11}.

\end{proof}

\section{Reduced $L^2$-cohomology of quiver varieties}\label{rcqv.0}

Together with Corollary~\ref{cqac.21}, the vanishing result of Theorem~\ref{bs.22} will allow us to use the pseudodifferential calculus of \cite{KR1} to prove the Vafa-Witten conjecture.  However, to  be able to proceed by recurrence in the use of Theorem~\ref{bs.22}, we need to be more specific about the type of nicely $\Sp(1)$-equivariant flat Euclidean vector bundles we will consider.
\begin{definition}
Let $g_w$ be an exact $3$-Sasakian metric on a compact manifold with fibered corners $\cS$.  Then a \textbf{fully nicely $\Sp(1)$-equivariant flat Euclidean vector bundle } $E\to \cS$ is a $\Sp(1)$-equivariant flat Euclidean vector bundle on $\cS$ such that for $H\in \cM_1(\cS)$ and $s\in S_H$, the restriction of $E$ to the fiber $\phi_H^{-1}(s)$ is also $\Sp(1)$-equivariant with respect to the $\Sp(1)$-action associated to the exact wedge $3$-Sasakian metric on $\phi_H^{-1}(s)$ induced by $g_w$. 
\label{fe.1}\end{definition}
To explain how such vector bundles arise, let $\mathfrak{M}_{\zeta}$ be a (possibly reduced) quiver variety as in Corollary~\ref{cqac.21} with Nakajima metric $g_{\QAC}$.  The associated manifold with fibered corners is then 
$$
\widetilde{\mathfrak{M}}_{\zeta}:= \widetilde{\mu^{-1}(-\zeta)}/G.
$$
By \eqref{pt.1}, for $H\in \cM_1(\widetilde{\mathfrak{M}}_{\zeta})$ with fiber bundle $\phi_H: H\to S_H$, the metric is asymptotically modelled on a metric of the form
$$
    \frac{du_H^2}{u_H^4} + \pr_1^{*} \frac{\phi_H^*g_{S_H}}{u_H^2} + \pr_1^* \kappa_H
$$
with $g_{S_H}$ an exact wedge metric on $S_H$ and $\kappa_H$ a family of fiberwise $\QAC$-metrics in the fibers of $\phi_H: H\to S_H$ seen as  a $2$-tensor on $H$ with respect to some connection for the bundle $\phi_H:H\to S_H$ .  By the proof of Theorem~\ref{cqac.14}, if the boundary hypersurface $H$ is associated to the the conjugacy class of the stabilizer $G_m$, then for each $s\in S_H$, $(\phi_H^{-1}(s),\kappa_H|_{\phi_H^{-1}(s)})$ corresponds to the quiver variety $\hat{\mu}_m^{-1}(-\zeta_m)/G_m$ with Nakajima metric $g_m$.  On the other hand, by Example~\ref{ew3S}, the metric $g_{S_H}$ is an exact wedge $3$-Sasakian metric on $S_H$.  

Suppose now that the the quiver variety $\hat{\mu} _m^{-1}(-\zeta_m)/G_m$ has finite dimensional reduced $L^2$-cohomology, so a finite dimensional space of $L^2$-harmonic forms.  In that case, there is a corresponding vector bundle $E_H\to S_H$ of vertical  $L^2$-harmonic form with fiber $E_{H,s}$ above $s\in S_H$ corresponding to the space of $L^2$-harmonic forms of $(\phi_H^{-1}(s),\kappa_H|_{\phi_H^{-1}(s)})$. 
\begin{lemma}
The vector bundle $E_H\to S_H$ is a fully nicely $\Sp(1)$-equivariant flat Euclidean vector bundle on $(S_H,g_{S_H})$.   
\label{fe.2}\end{lemma}
\begin{proof}
Notice first that $E_H$ is naturally a Euclidean vector bundle with bundle metric induced by the family of metrics $\kappa_H$.  Now, the local description of \eqref{cor.6a}, \eqref{cor.6b} and \eqref{cor.6c} of $\widetilde{\mu^{-1}(-\zeta)}$ near $H$ gives, after passing to the quotient by the action of $G$, a local trivialization of the fiber bundle $\phi_H:H\to S_H$ that trivializes at the same time the connection induced by the distribution orthogonal to the fibers of $\phi_H$ with respect to the metric $\phi_H^*g_{S_H}+ \kappa_H$.  In particular, the induced connection on $E_H$ is flat and preserves the bundle metric of $E_H$, showing that $E_H$ is a flat Euclidean vector bundle.   

Now, the $\Sp(1)$-action on $S_H$ is induced from the $\Sp(1)$-action on the associated Nakajima quiver representation space $\bold{M}$.  This action commutes with the action of $G$, so preserves the stratification of $\bold{M}$ induced by the action of $G$.  Thus, in the local trivializations of $\phi_H: H\to S_H$ and $E_H\to S_H$ over some  $\cW\subset S_H$, 
$$
    \phi_H^{-1}(\cW)\cong \cW\times \widetilde{\hat{\mu}_m^{-1}(-\zeta_m)/G_m} \quad \mbox{and}\quad E_H|_{\cW}\cong \cW\times \cH^*(\hat{\mu}_m^{-1}(-\zeta_m)/G_m; g_m)
$$
with the action of $\Sp(1)$ on $S_H$ locally lifted to be trivial on the factors 
$$\widetilde{\hat{\mu}_m^{-1}(-\zeta_m)/G_m} \quad \mbox{and} \quad \cH^*(\hat{\mu}_m^{-1}(-\zeta_m)/G_m; g_m)
$$ 
respectively.  This shows in particular that $E_H$ is indeed nicely $\Sp(1)$-equivariant.  

To verify that $E_H$ is fully nicely $\Sp(1)$-equivariant, we need to check that given $H'\in \cM_1(\widetilde{\mu^{-1}(-\zeta)/G})$  such that $H'<H$, the restriction $E_H|_{\phi_{HH'}^{-1}(s)}$ is nicely $\Sp(1)$-equivariant, where $s\in S_{H'}$ and $\phi_{HH'}: S_{HH'}\to S_{H'}$ is the bundle of Definition~\ref{MWFC} with $S_{HH'}\in \cM_1(S_H)$ the boundary hypersurface of $S_H$ associated to $H'$.  Now, the iterated fibration structure of $\widetilde{\mu^{-1}(-\zeta)/G}$ induces one on $\phi_{H'}^{-1}(s)$ and $H\cap \phi^{-1}_{H'}(s)$ is a boundary hypersurface with fiber bundle
$$
   \phi_H: H\cap\phi^{-1}_{H'}(s)\to \phi^{-1}_{HH'}(s)
$$
induced by $\phi_H$.  Moreover the Nakajima metric on $\mathfrak{M}_{\zeta}$ induces a Nakajima metric on $\phi_{H'}^{-1}(s)$ and the $\Sp(1)$-action on $\phi_{HH'}^{-1}(s)$ is induced by the $\Sp(1)$-action on  $\phi_{H'}^{-1}(s)$.  Therefore, working on $\phi^{-1}_{H'}(s)$, we can check that $E_H|_{\phi^{-1}_{HH'}(s)}$ is nicely $\Sp(1)$-equivariant on $\phi^{-1}_{HH'}(s)$ by using the same argument that was used to show that $E_H$ is nicely $\Sp(1)$-equivariant on $S_H$.  
\end{proof}

This lemma will allow us to use Theorem~\ref{bs.22} and apply an argument by induction on the depth of the quiver variety to extract the following result from \cite{KR1}.

\begin{theorem}
For $\zeta\in \bbR^3\times Z$ properly generic, the (possibly reduced) quiver variety $\mathfrak{M}_{\zeta}$ admits a $\QAC$ metric $\widetilde{g}_{\QAC}$ which is $\QAC$ equivalent to the Nakajima metric and such that its space of $L^2$-harmonic forms is finite dimensional and contained in $v^{\epsilon}L^2\Omega^*(\mathfrak{M}_{\zeta}, \widetilde{g}_{\QAC})$ for some $\epsilon>0$.  In particular, the reduced $L^2$-cohomology of the quiver variety is finite dimensional.  
\label{l2c.1}\end{theorem}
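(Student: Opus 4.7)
My plan is to apply the Fredholm theory for Hodge--deRham operators on $\QAC/\QFB$ manifolds developed in \cite{KR1,KR2} to the Nakajima metric on $\mathfrak{M}_{\zeta}$. By Corollary~\ref{cqac.21}, this metric is an exact $\QAC$ metric with smooth asymptotic expansion on the $\QAC$ compactification $\widetilde{\mu^{-1}(-\zeta)}/G$ constructed in Theorem~\ref{cqac.14}, so the geometric framework of \cite{KR1} is in place from the outset.

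The crucial input to verify before invoking \cite{KR1,KR2} is a spectral gap condition on the Hodge--deRham operator at the base of each boundary fibration. By the inductive description following Theorem~\ref{cqac.14}, each boundary hypersurface of $\widetilde{\mu^{-1}(-\zeta)}/G$ has a fiber bundle whose fiber is the $\QAC$ compactification of a (possibly reduced) properly generic quiver variety of lower depth, while whose base carries an exact wedge $3$-Sasakian metric by Example~\ref{ew3S}. Theorem~\ref{bs.20} then guarantees that the Hodge--deRham operator on each such base is essentially self-adjoint with discrete spectrum and, crucially, that its associated indicial family has no indicial root in the critical strip $[0,1]$.

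I would then proceed by induction on the depth of the $\QAC$ compactification: the fibers of each boundary fibration are $\QAC$ compactifications of lower-depth properly generic quiver varieties, for which the induction hypothesis supplies the fully elliptic property needed to feed into \cite{KR1,KR2}. Combined with the absence of indicial roots at the bases, the calculus of \cite{KR1} realizes the Hodge--deRham operator of $\widetilde{g}_{\QAC}$ (after possibly passing to a $\QAC$-equivalent metric putting the asymptotic expansion in the precise product form used in \cite{KR1,KR2}) as a fully elliptic element of the $\QFB$ pseudodifferential calculus. Such operators are Fredholm between appropriately weighted Sobolev spaces, and the vanishing of indicial roots in $[0,1]$ forces any $L^2$-harmonic form to decay, landing in $v^{\epsilon}L^2\Omega^*(\mathfrak{M}_{\zeta},\widetilde{g}_{\QAC})$ for some $\epsilon>0$. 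Finite-dimensionality of the reduced $L^2$-cohomology is then a consequence of the standard Hodge theoretic identification with $\ker_{L^2}\eth$ on the complete manifold of bounded geometry $(\mathfrak{M}_{\zeta},\widetilde{g}_{\QAC})$.

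The main obstacle is closing the induction cleanly: at each stage, the fiber over a point of a boundary base must itself be a properly generic quiver variety so that the inductive Fredholm statement applies, and the spectral gap of Theorem~\ref{bs.20} must remain available on each intermediate wedge $3$-Sasakian base. Proper genericity of $\zeta$ in the sense of Definition~\ref{pg.2} is precisely designed to guarantee the first requirement, while Example~\ref{ew3S} together with Theorem~\ref{bs.20} take care of the second at every depth, so the induction goes through and the theorem follows as an essentially formal consequence of \cite{KR1,KR2} applied in this specific geometric setting.
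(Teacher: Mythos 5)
Your proposal correctly identifies the two geometric inputs (Corollary~\ref{cqac.21} for the exact $\QAC$ structure and Theorem~\ref{bs.20}/Example~\ref{ew3S} for the spectral behaviour of the $3$-Sasakian bases) and the induction on depth, but it has a genuine gap at its central analytic step: you claim that the induction hypothesis makes the Hodge--deRham operator a \emph{fully elliptic} element of the $\QFB$ calculus, and that Fredholmness plus the absence of indicial roots then gives decay. This cannot be right as stated, because the fibers of the boundary fibrations are lower-depth quiver varieties which, by the very statement of the Vafa--Witten conjecture, are expected to carry \emph{nontrivial} $L^2$-harmonic forms in middle degree; the fiberwise normal operators therefore have nontrivial kernel and the Hodge--deRham operator is not fully elliptic. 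The whole point of \cite[Theorem~17.5]{KR1}, which the paper invokes, is to handle exactly this situation, and it requires verifying \cite[Assumptions~17.1, 17.2 and 17.4]{KR1} rather than full ellipticity. The key missing step in your argument is \cite[Assumption~17.1]{KR1}: one must show that the fiberwise $L^2$-harmonic forms of each boundary fibration $\phi_H\colon H\to S_H$ assemble into a finite-rank vector bundle over $S_H$. The paper does this by combining the inductive finite-dimensionality with the quasi-isometry invariance of reduced $L^2$-cohomology and the Ehresmann-type lemma of \cite[Corollary~A.6]{KR3} to get local constancy of the dimension; nothing in your proposal plays this role.

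A second, smaller omission: you never use Hitchin's theorem \cite{Hitchin} that $L^2$-harmonic forms on these hyperK\"ahler quotients vanish outside middle degree. In the paper this is what makes \cite[Assumption~17.4]{KR1} a consequence of \cite[Assumption~17.2]{KR1} and reduces the required fiber/base vanishing to degrees within distance one of the middle, where the $3$-Sasakian vanishing (Corollary~\ref{bs.22}, i.e.\ the consequence of Theorem~\ref{bs.12}) applies. Your appeal to Theorem~\ref{bs.20} (no indicial roots in $[0,1]$) is in the right spirit for the base contribution, but without the reduction to near-middle degrees and without the vector-bundle structure of the fiber harmonic forms, the machinery of \cite{KR1} you want to cite does not engage, so the induction does not close as written.
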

\begin{proof}
By Corollary~\ref{cqac.21}, the result will follow from \cite[Theorem~17.5]{KR1} provided that we can check that \cite[Assumptions~17.1,17.2 and 17.4]{KR1} hold for the hyperK\"ahler metric on $\mathfrak{M}_{\zeta}$.  When the $\QAC$ compactification $\widetilde{\mathfrak{M}_{\zeta}}=\widetilde{\mu^{-1}(\zeta)}/G$ of $\mathfrak{M}_{\zeta}$ is of depth 1, notice that \cite[Assumption~17.1]{KR1} is trivially satisfied.  Proceeding by induction on the depth of $\widetilde{\mathfrak{M}_{\zeta}}$, we can suppose more generally that Theorem~\ref{l2c.1} holds for quiver varieties having a $\QAC$ compactification of lower depth.  For $H\in \cM_1(\widetilde{\mathfrak{M}_{\zeta}})$, this means that the hyperK\"ahler metrics of the fibers of $\phi_H: H\to S_H$ have finite dimensional spaces of reduced $L^2$-cohomology, so finite dimensional spaces of $L^2$-harmonic forms.  By Lemma~\ref{fe.2},  \cite[Assumption~17.1]{KR1} holds in this case and the corresponding bundle $E_H\to S_H$ of $L^2$-harmonic forms is a fully nicely $\Sp(1)$-equivariant flat Euclidean vector bundle.  

To complete the proof and the induction, we need to check that \cite[Assumptions~17.2 and 17.4]{KR1} also hold.  First notice that by the result of Hitchin \cite{Hitchin}, the space of $L^2$-harmonic forms of a quiver variety is trivial except possibly in middle degree.  In particular, all the spaces of $L^2$-harmonic forms occurring in \cite[Assumptions 17.2 and 17.4]{KR1} are trivial outside middle degree.  In this case, one can check that \cite[Assumptions~17.4]{KR1} is implied by \cite[Assumption~17.2]{KR1}, so we only need to check the latter.  This assumption requires that for each $H\in\cM_1(\widetilde{\mathfrak{M}_{\zeta}})$, the fully nicely $\Sp(1)$-equivariant flat Euclidean vector bundles $E_H$ on $S_H$ has trivial spaces of $L^2$-harmonic forms in degree $q$ for 
$$
    \left| q-\frac{\dim S_H}2\right|\le 1
$$ 
with respect to the wedge metric $g_{S_H}$ induced by the Nakajima metric of $\mathfrak{M}_{\zeta}$.  By Example~\ref{ew3S}, this metric is an exact wedge $3$-Sasakian metric, so $\dim S_H$ is always odd and we need to check that the space of harmonic forms is trivial in degree $\frac{\dim S_H \pm 1}2$.  Now, by the symmetry of the Hodge star operator, we only need to check this in degree $\frac{\dim S_H- 1}2$, in which case the result follows from Theorem~\ref{bs.22}.

\end{proof}

Using the results of \cite{KR2}, this yields the following characterization of the reduced $L^2$-cohomology of a quiver variety.  

\begin{theorem}
If $(\mathfrak M_{\zeta},g_N)$ is a (possibly reduced) quiver variety equipped with the Nakajima metric $g_N$ and with $\zeta$ properly generic, then 
$$
     \Im[ H^*_c(\mathfrak M_{\zeta})\to H^*(\mathfrak M_{\zeta})] = \cH^*(\mathfrak M_{\zeta}).
$$
\label{l2c.2}\end{theorem}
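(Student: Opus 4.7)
The plan is to apply the comparison theorem of \cite{KR2} identifying the reduced $L^2$-cohomology of a $\QAC$ manifold with the image of compactly supported cohomology in absolute cohomology, after verifying that the Nakajima metric on $\mathfrak M_\zeta$ satisfies all the prerequisites. The inclusion $\Im[H^q_c(\mathfrak M_\zeta)\to H^q(\mathfrak M_\zeta)]\hookrightarrow \cH^q(\mathfrak M_\zeta)$ already holds on any complete Riemannian manifold via the standard Hodge-theoretic construction outlined in the introduction, so it suffices to produce the reverse inclusion.

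First, I would invoke Hitchin's theorem \cite{Hitchin} to reduce to middle degree $q=\tfrac12\dim \mathfrak M_\zeta$: outside middle degree, $\cH^q(\mathfrak M_\zeta) = 0$ and the equality is vacuous. In middle degree, by \cite[Corollary~11.2]{Nakajima_1998}, the forget-support map $H^q_c(\mathfrak M_\zeta)\to H^q(\mathfrak M_\zeta)$ is itself an isomorphism, so the statement to prove collapses to the dimension identity
\[
\dim \cH^q(\mathfrak M_\zeta)= \dim H^q(\mathfrak M_\zeta), \qquad q=\tfrac{1}{2}\dim \mathfrak M_\zeta.
\]

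Second, I would feed the data we have into the comparison machinery of \cite{KR2}: Corollary~\ref{cqac.21} supplies the exact $\QAC$ structure with smooth asymptotic expansion, Theorem~\ref{l2c.1} supplies finite dimensionality and the $v^\epsilon$-decay of harmonic forms, Theorem~\ref{bs.20} supplies essential self-adjointness and the required spectral gaps for the Hodge-deRham operators on the exact wedge $3$-Sasakian bases $\Sigma_i$ of the boundary fiber bundles, and Theorem~\ref{cqac.14} identifies the fibers at the boundary as $\QAC$ quiver varieties of strictly lower depth, to which the previous steps apply inductively. Under these hypotheses the machinery of \cite{KR2} produces a natural isomorphism $\cH^*(\mathfrak M_\zeta) \cong \Im[H^*_c(\mathfrak M_\zeta) \to H^*(\mathfrak M_\zeta)]$, which combined with the Nakajima isomorphism in middle degree closes the dimension count and yields Theorem~\ref{l2c.2}.

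The main obstacle I expect is the decay bookkeeping, flagged already in Remark~\ref{l2c.1a}: the spectral gap of Theorem~\ref{bs.12} just misses the sharp range $\delta=-\tfrac12$ demanded by \cite[Assumption~9.14]{KR2}, so the KR2 comparison must be driven by the weaker (but still positive) $v^\epsilon$-decay provided by Theorem~\ref{l2c.1} rather than the optimal weight. The saving grace is Hitchin's vanishing outside middle degree, which forces the weighted long exact sequences appearing in the KR2 spectral sequence to collapse in the degrees that matter, so that the sharp indicial analysis is not in fact required to extract the middle-degree dimension identity. Managing this interplay between the inductive step on fibers, the suboptimal indicial range, and Hitchin's vanishing is the substantive analytic content of the argument.
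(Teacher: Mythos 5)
Your proposal has a genuine gap at its center, and it is circular at the crucial step. After reducing to middle degree (via Hitchin and \cite[Corollary~11.2]{Nakajima_1998}), what remains to be shown is the dimension identity $\dim\cH^{q}(\mathfrak M_\zeta)=\dim H^{q}(\mathfrak M_\zeta)$ in middle degree; at that point you simply assert that ``the machinery of \cite{KR2} produces a natural isomorphism $\cH^*(\mathfrak M_\zeta)\cong \Im[H^*_c\to H^*]$'' --- but that isomorphism \emph{is} the statement of Theorem~\ref{l2c.2}, so invoking it as a black box proves nothing, and if it were available wholesale your reduction to middle degree would be superfluous. The results of \cite{KR2} (their Theorem~3.5 and Corollary~3.3) are proved for the Hilbert scheme of points, and their proofs combine the analytic identification of reduced $L^2$-cohomology coming from \cite{KR1} with a specific \emph{topological} input that must be re-established for general quiver varieties. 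That input is precisely what your proposal never supplies: smooth quiver varieties are diffeomorphic to smooth affine complex varieties, hence by Lefschetz \cite[Theorem~7.2]{MilnorMorse} have no cohomology above middle degree. This vanishing, together with Corollary~\ref{cqac.21}, is what allows the proof of \cite[Corollary~3.3]{KR2} to generalize, and then Theorem~\ref{l2c.1} lets the proof of \cite[Theorem~3.5]{KR2} go through; that is the actual content of the paper's argument. Hitchin's vanishing and Nakajima's middle-degree isomorphism cannot replace this step, because they say nothing about how $\cH^*$ is identified with a topological invariant of the compactification.

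A secondary concern: your closing paragraph about the suboptimal indicial range and ``Hitchin's vanishing forcing the weighted long exact sequences in the KR2 spectral sequence to collapse'' is speculation rather than argument. The interplay with \cite[Assumption~9.14]{KR2} and the $v^{\epsilon}$-decay is already dealt with once and for all in Theorem~\ref{l2c.1} (where Hitchin's theorem is genuinely used to verify \cite[Assumptions~17.2 and 17.4]{KR1}); nothing of that sort substitutes for the missing Lefschetz input in the passage from Theorem~\ref{l2c.1} to Theorem~\ref{l2c.2}. Also note that your reduction outside middle degree is not ``vacuous'': it silently uses the natural inclusion $\Im[H^q_c\to H^q]\hookrightarrow\cH^q$, which is part of the standing framework here but should be flagged as the reason both sides vanish simultaneously.
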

\begin{proof}
Since smooth quiver varieties are diffeomorphic to smooth affine complex varieties, they have no cohomology above middle degree by a result of Lefschetz \cite[Theorem~7.2]{MilnorMorse}.  Using this property and Corollary~\ref{cqac.21}, the proof of \cite[Corollary~3.3]{KR2} generalizes automatically to quiver varieties with properly generic $\zeta$. Combined with Theorem~\ref{l2c.1}, this allows to generalize the proof of \cite[Theorem~3.5]{KR2} to any quiver variety with properly generic $\zeta$, which yields the result.

\end{proof}

\bibliography{GIQV}
\bibliographystyle{amsplain}

\end{document}